\documentclass[11pt]{article}

\usepackage[letterpaper,top=2cm,bottom=2cm,left=2cm,right=2cm,marginparwidth=1.75cm]{geometry}

\usepackage[dvipsnames]{xcolor}

\usepackage{amsmath, amsfonts, mathtools}
\usepackage{amsthm} \usepackage{graphicx}

\usepackage{orcidlink}
\usepackage{multirow}

\usepackage{hyperref}
\hypersetup{hidelinks}
\usepackage[noblocks]{authblk}
\usepackage{float}
\usepackage{booktabs}
\usepackage{array}
\newcolumntype{C}[1]{>{\centering\arraybackslash}m{#1}}

\newcommand{\px}{\partial_{x}}
\newcommand{\py}{\partial_{y}}

\newcommand{\pa}{{\partial}}

\newcommand{\ap}{\alpha}

\newcommand{\util}{\Tilde{u}}

\newcommand{\vtil}{\Tilde{v}}

\newcommand{\bbu}{\boldsymbol{u}}
\newcommand{\bbg}{\boldsymbol{g}}

\newcommand{\tU}{\widetilde{U}}
\newcommand{\tA}{\widetilde{A}}

\newcommand{\iRe}{\mathrm{Re}^{-1}}
\newcommand{\mathR}{\mathbb{R}}
\newcommand{\md}{{\,\rm d}}

\newcommand{\fl}[2]{\frac{#1}{#2}}

\newtheorem{definition}{Definition}[section]
\newtheorem{theorem}{Theorem}[section]
\newtheorem{remark}{Remark}[section]
\newtheorem{proposition}{Proposition}[section]

\newcommand{\diag}{\mbox{diag}}
\newcommand{\brac}[1]{\left(#1\right)}

\newtheorem{lem}{Lemma}[section]
\newtheorem{lemma}{Lemma}[section]

\theoremstyle{definition}

\numberwithin{equation}{section}

\title{Two-Dimensional Shallow Water Linearized Moment Equations: Hyperbolicity and Well-Balanced Schemes}

\author[1]{Shiping Zhou \orcidlink{0000-0002-3247-0328}\thanks{Corresponding author: \texttt{zhouship@msu.edu}}}
\author[2]{Juntao Huang \orcidlink{0000-0003-0527-7431}}
\author[1]{Andrew J. Christlieb \orcidlink{0000-0002-5395-5455}}

\affil[1]{\small{Computational Mathematics, Science and Engineering, Michigan State University, East Lansing, MI 48824, USA}}
\affil[2]{\small{Department of Mathematical Sciences, University of Delaware, Newark, DE 19716, USA}}

\date{September 23, 2026}

\begin{document}
\maketitle

\bigskip
\noindent
{\bf Abstract.}
{
We study rotational invariance, hyperbolicity, and well-balanced discretization of shallow water linearized moment equations in two horizontal dimensions.
Although the direct extension has real characteristic speeds, its directional matrices can lack a complete eigenbasis.
We construct a rotationally invariant modification and prove its global hyperbolicity for all states with positive water depth.
For quasi-two-dimensional flows over non-flat topography with Navier-slip friction, we develop first- and second-order path-conservative finite-volume schemes that exactly preserve a discrete moving equilibrium constructed by midpoint collocation.
Numerical tests confirm equilibrium preservation to roundoff, second-order approximation of smooth stationary branches, and near-second-order accuracy of the second-order scheme for the tested perturbations.
Comparisons with three-dimensional two-phase OpenFOAM simulations illustrate the invariant shallow-water limit and show that retaining moments improves the reconstruction of curved rotational velocity profiles, while differences remain in radial motion.
}

{\bf { Keywords}:} {shallow water moment equations, rotational invariance, global hyperbolicity, well-balanced schemes, moving equilibria, path-conservative methods.}

{\bf { Mathematics Subject Classification (2020)}:} { 35L65, 65M08, 65M12, 76M12.}

\section{Introduction}
The shallow water equations (SWE) are classical depth-averaged models for free-surface flows, with applications in open-channel hydraulics and natural-hazard assessment \cite{French1985,Christen2010}.
Their computational economy comes from replacing the three-dimensional velocity field by the water depth and depth-averaged horizontal velocity.
The resulting equations do not resolve vertical variations of the horizontal velocities or the associated contributions to momentum transport.

Multilayer shallow water equations, also called multilayer Saint--Venant models, provide an alternative description of vertical structure through layerwise averaged velocities.
These models introduce interlayer exchange or coupling and pose model-dependent challenges concerning hyperbolicity and the treatment of nonconservative terms at interfaces \cite{Audusse2005Multilayer,Audusse2011Multilayer,BouchutZeitlin2010}.
Shallow water moment equations (SWME) recover part of this information by expanding the horizontal velocity about its depth average in a finite Legendre basis and evolving the expansion coefficients \cite{Kowalski2019}.
A common framework combining layerwise and moment approximations is provided by multilayer-moment models \cite{Garres-Diaz2023}.
Applications of moment models include bedload transport, where near-bottom velocity is relevant \cite{Garres-Diaz2021}, and two-dimensional curved shallow flows with secondary circulation \cite{Steldermann2023}.
This moment representation of the velocity has also been combined with an effective wall closure for no-slip bottoms \cite{Zhou2025a}.

The loss of global hyperbolicity in higher-order SWME motivated the development of hyperbolic shallow water moment equations (HSWME) and shallow water linearized moment equations (SWLME) \cite{Koellermeier2020,Koellermeier2022}.
The SWLME simplify selected nonlinear couplings in the higher-moment equations while retaining the quadratic moment contributions to the mean-momentum flux \cite{Koellermeier2022}.
Hyperbolic regularization in primitive variables provides another approach to balancing hyperbolicity, momentum consistency, and tractable stationary states \cite{Koellermeier2025}.
In two horizontal dimensions, hyperbolicity must hold in every direction, and rotational invariance provides the similarity relation that reduces this directional analysis to an $x$-direction matrix \cite{Bauerle2025}.
Bauerle et al.\ developed rotationally invariant hyperbolic modifications of the HSWME family.

For these moment models, an important numerical challenge is to accurately resolve flows near steady states, where transport terms balance topography and friction sources.
Well-balanced schemes address this challenge by preserving the corresponding discrete equilibria \cite{Bermudez1994,Greenberg1996,LeVeque1998,Vazquez-Cendon1999,Audusse2004}.
For the SWE, methods preserving moving-water equilibria include high-order finite-volume WENO and discontinuous Galerkin schemes \cite{Noelle2007,Xing2014}, and their importance for resolving small perturbations of moving flows has been demonstrated in \cite{XingShuNoelle2011}.
For moment systems, this balance also involves the moment variables and the nonconservative terms in their evolution equations \cite{Koellermeier2022,Pimentel-Garcia2024}.
Path-conservative methods provide a framework for treating these nonconservative products \cite{DalMaso1995,Pares2006b}.
Methods based on reconstructing deviations from a supplied stationary profile can preserve that profile \cite{Klingenberg2019,Berberich2021}, whereas local stationary reconstruction methods seek an equilibrium matching the evolving cell data \cite{Castro2020,GomezBuenoControl2021}.
Collocation-based stationary solvers extend the latter construction to cases without analytic equilibrium formulas \cite{Gomez-Bueno2021a}.

For one-dimensional SWLME, Koellermeier and Pimentel-Garc\'ia characterized frictionless moving equilibria over topography and constructed first- and second-order well-balanced finite-volume schemes \cite{Koellermeier2022}.
Pimentel-Garc\'ia extended fully well-balanced reconstruction to smooth topography with friction \cite{Pimentel-Garcia2024}.
Within this one-dimensional setting, recent methods include a semi-implicit exactly fully well-balanced relaxation method for low-Froude SWLME \cite{Caballero-Cardenas2025}, a second-order flux-globalized central-upwind method for frictional hyperbolic SWLME \cite{Cao2026}, and high-order equilibrium-preserving discontinuous Galerkin methods for SWLME \cite{Fan2026}.
For cases in which analytic steady states are unavailable, Ciallella and Koellermeier developed high-order global-flux WENO methods that approximately preserve general steady states of shallow water moment systems with nonconservative products \cite{Ciallella2026}.
A complementary development is entropy-stable, lake-at-rest well-balanced discontinuous Galerkin discretization of one-dimensional moment equations \cite{Careaga2026}.
Two-dimensional shallow-moment computations with lake-at-rest well-balancing also appear in \cite{Steldermann2023}; these should be distinguished from preservation of frictional moving equilibria.

Extending the SWLME to two horizontal dimensions also raises a structural difficulty, since it introduces additional coupling between the moment variables associated with the two velocity components.
As we show in Section~\ref{sec:sub:hyper-SWLME}, the resulting system can have real characteristic speeds without a complete eigenbasis and therefore fails to be globally hyperbolic.
To address this difficulty, we establish rotational invariance of the two-dimensional SWLME and construct a rotationally invariant, globally hyperbolic modification, denoted by $G\text{-}\mathrm{SWLME}$.
For every retained order $N\geq1$ and positive depth, the modified directional matrices have a complete real eigenbasis, including at the defective states of the direct extension.

For quasi-two-dimensional flows, which vary in one horizontal coordinate while retaining both horizontal velocity components and their moment families, we construct a discrete moving equilibrium by midpoint collocation.
The construction uses prescribed boundary data and the standard projected Navier-slip source on smooth stationary branches for which the stationary coefficient matrix remains invertible.
Building on path-conservative discretization, equilibrium-deviation reconstruction, and collocation \cite{Pares2006b,Klingenberg2019,Gomez-Bueno2021a}, we develop first- and second-order finite-volume schemes that exactly preserve this discrete equilibrium in exact arithmetic.
This preservation property is distinct from the second-order approximation of the continuous stationary branch and is restricted to the prescribed quasi-two-dimensional equilibrium.
Numerical experiments assess moment dynamics, stationary-profile approximation, equilibrium preservation, and perturbation accuracy, while comparisons with three-dimensional two-phase OpenFOAM simulations illustrate model differences in radial collapse.

The remainder of the paper is organized as follows.
{
Section~\ref{sec:SWME} reviews the hydrostatic formulation and the two-dimensional shallow water moment equations with the standard projected Navier-slip source.
Section~\ref{sec:hyperbolicity} introduces the two-dimensional SWLME and establishes rotational invariance and hyperbolicity results.
}
Section~\ref{sec:well-balanced} develops first- and second-order well-balanced schemes for a prescribed discrete equilibrium.
Section~\ref{sec:tests} presents the numerical evidence and discusses its limitations.

\section{Shallow water moment models}
\label{sec:SWME}

{
We review the hydrostatic formulation and moment expansion of \cite{Kowalski2019}, using the two-dimensional matrix formulation of \cite{Bauerle2025}.
}

\subsection{Hydrostatic equations in normalized vertical coordinates}
\label{subsec:normalized-system}

We begin with the three-dimensional incompressible Navier--Stokes equations
\begin{equation*}
\left\{
\begin{aligned}
    \nabla\cdot \bbu &= 0, \\
    \fl{\pa\bbu}{\pa t} + \nabla\cdot(\bbu\bbu) &= -\fl{1}{\rho}\nabla p + \fl{1}{\rho}\nabla\cdot\sigma + \bbg,
\end{aligned}
\right.
\end{equation*}
where $\bbu=(u,v,w)^T$ is the velocity, $p$ is the pressure, $\sigma$ is the deviatoric stress tensor, and the density $\rho$ is constant.
For a Newtonian fluid, $\sigma=\mu(\nabla\bbu+\nabla\bbu^T)$, where $\mu$ is the dynamic viscosity.
The gravitational acceleration is $\bbg=(0,0,-g)^T$.

At the free surface $z=h_s(t,x,y)$, with outward unit normal $\boldsymbol{n}_s$, the material and stress-free conditions are
\begin{equation}\label{bc:free-surface-explicit}
    \fl{\pa h_s}{\pa t}+u\fl{\pa h_s}{\pa x} + v\fl{\pa h_s}{\pa y}=w,
    \qquad
    (-pI_3+\sigma)\boldsymbol{n}_s=\boldsymbol{0},
    \qquad \mbox{on } z=h_s.
\end{equation}

At the smooth stationary bottom $z=h_b(x,y)$, we impose non-penetration and componentwise Navier slip \cite{Kowalski2019}:
\begin{equation*}
    \bbu\cdot\boldsymbol{n}_b=0,
    \qquad
    \left.(\kappa u-\sigma_{xz})\right|_{z=h_b}=0,
    \qquad
    \left.(\kappa v-\sigma_{yz})\right|_{z=h_b}=0.
\end{equation*}
Here $\boldsymbol{n}_b$ is the outward unit normal and $\kappa$ is the friction coefficient.

Let $L$, $H$, and $U$ be the characteristic horizontal length, depth, and horizontal velocity scales.
We set
\begin{equation*}
    \varepsilon=\frac{H}{L}\ll 1.
\end{equation*}
We assume that the bottom varies on the horizontal scale $L$, so that $|\partial_x h_b|+|\partial_y h_b|=\mathcal{O}(\varepsilon)$.
These slip conditions are exact for a flat bottom and approximate the tensorial Navier law to leading order for smooth mild slopes.
Nondimensionalization and neglect of higher-order terms in $\varepsilon$ yield the hydrostatic shallow-flow system \cite{Kowalski2019,Zhou2025a}.
We nondimensionalize the horizontal velocities as $u/U$ and $v/U$ and retain the notation $u$ and $v$.
We then introduce the normalized vertical coordinate
\begin{equation*}
    \zeta=\frac{z-h_b(x,y)}{h(t,x,y)},
    \qquad h=h_s-h_b,
\end{equation*}
which maps the physical interval $h_b\le z\le h_s$ to $0\le\zeta\le 1$.
For a field $\theta=\theta(t,x,y,z)$, we denote its mapped counterpart by
\begin{equation*}
    \widetilde{\theta}(t,x,y,\zeta)
    =\theta(t,x,y,h\zeta+h_b).
\end{equation*}
The tilde changes only the vertical coordinate; $\widetilde u$ and $\widetilde v$ remain nondimensional in this derivation.

The vertical velocity is scaled with $\varepsilon U$.
The mapping induces the contravariant vertical transport velocity $\omega$, defined by
\begin{equation*}
    h\omega
    =\widetilde{w}
    -\fl{\pa(h\zeta+h_b)}{\pa t}
    -\util\fl{\pa(h\zeta+h_b)}{\pa x}
    -\vtil\fl{\pa(h\zeta+h_b)}{\pa y}.
\end{equation*}
The kinematic boundary conditions imply $\omega(t,x,y,0)=\omega(t,x,y,1)=0$.
The resulting vertically normalized system is
\begin{equation}\label{eq:normalized-hydrostatic-system}
\left\{
\begin{aligned}
    \fl{\pa h}{\pa t}
    + \fl{\pa (h u_m)}{\pa x}
    + \fl{\pa (h v_m)}{\pa y}
    &=0, \\
    \fl{\pa (h\util)}{\pa t}
    + \fl{\pa}{\pa x}\left(h\util^2 + \fl{G}{2}h^2\right)
    + \fl{\pa}{\pa y}\left(h\util\vtil\right)
    + \fl{\pa}{\pa\zeta}\left(h\util\omega-\fl{\iRe}{\varepsilon h}\fl{\pa\util}{\pa\zeta}\right)
    &=-Gh\fl{\pa h_b}{\pa x}, \\
    \fl{\pa (h\vtil)}{\pa t}
    + \fl{\pa}{\pa x}\left(h\util\vtil\right)
    + \fl{\pa}{\pa y}\left(h\vtil^2 + \fl{G}{2}h^2\right)
    + \fl{\pa}{\pa\zeta}\left(h\vtil\omega-\fl{\iRe}{\varepsilon h}\fl{\pa\vtil}{\pa\zeta}\right)
    &=-Gh\fl{\pa h_b}{\pa y}.
\end{aligned}
\right.
\end{equation}
The Froude and Reynolds numbers are $\mathrm{Fr}=U/\sqrt{gH}$ and $\mathrm{Re}=\rho UH/\mu$, respectively.
The dimensionless coefficients in the normalized system are
\begin{equation*}
    G=\mathrm{Fr}^{-2}=\fl{gH}{U^2},
    \qquad
    \iRe=\fl{\mu}{\rho UH}.
\end{equation*}

The dimensionless wall-friction coefficient is $\gamma=\kappa/(\rho U)$, with $\gamma=0$ corresponding to perfect slip.
In normalized coordinates, the stress-free and Navier slip conditions become
\begin{equation*}
\begin{aligned}
    \left.\fl{\pa\util}{\pa\zeta}\right|_{\zeta=1}
    =\left.\fl{\pa\vtil}{\pa\zeta}\right|_{\zeta=1}=0, \qquad
    \left.\fl{\iRe}{\varepsilon h}\fl{\pa\util}{\pa\zeta}\right|_{\zeta=0}
    =\fl{\gamma}{\varepsilon}\util(t,x,y,0),
    \qquad
    \left.\fl{\iRe}{\varepsilon h}\fl{\pa\vtil}{\pa\zeta}\right|_{\zeta=0}
    =\fl{\gamma}{\varepsilon}\vtil(t,x,y,0).
\end{aligned}
\end{equation*}

\subsection{Shallow water moment equations}
\label{subsec:MSWME}

Following the notation of \cite{Kowalski2019}, we define the depth-averaged velocities by
\begin{equation*}
    u_m(t,x,y)=\int_0^1 \util(t,x,y,\zeta)\,\md\zeta,
    \qquad
    v_m(t,x,y)=\int_0^1 \vtil(t,x,y,\zeta)\,\md\zeta.
\end{equation*}

We approximate the vertical profiles of the horizontal velocities using Legendre expansions
\begin{align*}
    \util(t,x,y,\zeta)
    &=u_m(t,x,y)+\sum_{j=1}^{N}\alpha_j(t,x,y)\phi_j(\zeta), \\
    \vtil(t,x,y,\zeta)
    &=v_m(t,x,y)+\sum_{j=1}^{N}\beta_j(t,x,y)\phi_j(\zeta).
\end{align*}

Here $\phi_j$ are the scaled Legendre polynomials on $[0,1]$, normalized by $\phi_j(0)=1$, with
\begin{equation*}
    \phi_1(\zeta)=1-2\zeta,
    \qquad
    \phi_2(\zeta)=1-6\zeta+6\zeta^2.
\end{equation*}

They satisfy
\begin{equation*}
    \int_0^1\phi_i\,\md\zeta=0,
    \qquad
    \int_0^1\phi_i\phi_j\,\md\zeta=\fl{\delta_{ij}}{2i+1},
    \qquad i,j=1,\ldots,N.
\end{equation*}

Thus, $\alpha_j$ and $\beta_j$ describe vertical deviations from the depth averages $u_m$ and $v_m$.
Since $\phi_j(0)=1$, the reconstructed bottom velocities are
\begin{equation}\label{eq:reconstructed-bottom-velocities}
    u_b^{(N)}=\util(t,x,y,0)=u_m+\sum_{j=1}^{N}\alpha_j,
    \qquad
    v_b^{(N)}=\vtil(t,x,y,0)=v_m+\sum_{j=1}^{N}\beta_j.
\end{equation}
Integrating the mass equation in the vertically normalized system over $\zeta\in[0,1]$ and using $\omega|_{\zeta=0,1}=0$ gives the depth-averaged mass equation.
Integrating the two momentum equations with unit weight gives the mean-momentum equations, while projection against $(2i+1)\phi_i$ gives the higher-moment equations.
Using the orthogonality relations and the normalized boundary conditions, the shallow water moment equations are
\begin{align*}
    \fl{\pa h}{\pa t}
    + \fl{\pa (hu_m)}{\pa x}
    + \fl{\pa (hv_m)}{\pa y}
    &=0, \\
    \fl{\pa (hu_m)}{\pa t}
    + \fl{\pa}{\pa x}\left(h\left(u_m^2+\sum_{j=1}^{N}\fl{\alpha_j^2}{2j+1}\right)+\fl{G}{2}h^2\right)
    + \fl{\pa}{\pa y}\left(h\left(u_mv_m+\sum_{j=1}^{N}\fl{\alpha_j\beta_j}{2j+1}\right)\right)
    &=-\fl{\gamma}{\varepsilon}u_b^{(N)}-Gh\fl{\pa h_b}{\pa x}, \\
    \fl{\pa (hv_m)}{\pa t}
    + \fl{\pa}{\pa x}\left(h\left(u_mv_m+\sum_{j=1}^{N}\fl{\alpha_j\beta_j}{2j+1}\right)\right)
    + \fl{\pa}{\pa y}\left(h\left(v_m^2+\sum_{j=1}^{N}\fl{\beta_j^2}{2j+1}\right)+\fl{G}{2}h^2\right)
    &=-\fl{\gamma}{\varepsilon}v_b^{(N)}-Gh\fl{\pa h_b}{\pa y},
\end{align*}
with the moment equations, for $i=1,\ldots,N$,
\begin{align*}
\nonumber
    \fl{\pa (h\alpha_i)}{\pa t}
    &+\fl{\pa}{\pa x}\left(h\left(2u_m\alpha_i+\sum_{j,k=1}^{N}A_{ijk}\alpha_j\alpha_k\right)\right)
    +\fl{\pa}{\pa y}\left(h\left(u_m\beta_i+v_m\alpha_i+\sum_{j,k=1}^{N}A_{ijk}\alpha_j\beta_k\right)\right) \\
    &=u_mD_i-\sum_{j,k=1}^{N}B_{ijk}D_j\alpha_k
    -(2i+1)\left(\fl{\gamma}{\varepsilon}u_b^{(N)}+\fl{\iRe}{\varepsilon h}\sum_{j=1}^{N}C_{ij}\alpha_j\right), \\
\nonumber
    \fl{\pa (h\beta_i)}{\pa t}
    &+\fl{\pa}{\pa x}\left(h\left(u_m\beta_i+v_m\alpha_i+\sum_{j,k=1}^{N}A_{ijk}\alpha_j\beta_k\right)\right)
    +\fl{\pa}{\pa y}\left(h\left(2v_m\beta_i+\sum_{j,k=1}^{N}A_{ijk}\beta_j\beta_k\right)\right) \\
    &=v_mD_i-\sum_{j,k=1}^{N}B_{ijk}D_j\beta_k
    -(2i+1)\left(\fl{\gamma}{\varepsilon}v_b^{(N)}+\fl{\iRe}{\varepsilon h}\sum_{j=1}^{N}C_{ij}\beta_j\right).
\end{align*}
The coefficients in the moment system are
\begin{align*}
    A_{ijk}&=(2i+1)\int_0^1\phi_i\phi_j\phi_k\md\zeta,
    && i,j,k=1,\ldots,N, \\
    B_{ijk}&=(2i+1)\int_0^1\phi_i'\left(\int_0^\zeta\phi_j\md\widehat{\zeta}\right)\phi_k\md\zeta,
    && i,j,k=1,\ldots,N, \\
    C_{ij}&=\int_0^1\phi_i'\phi_j'\md\zeta,
    && i,j=1,\ldots,N,
\end{align*}
and
\begin{equation*}
    D_i=\fl{\pa (h\alpha_i)}{\pa x}+\fl{\pa (h\beta_i)}{\pa y}.
\end{equation*}

The system can be written in the matrix-vector form
\begin{equation}\label{eq:SWME}
    \partial_t U+A(U)\partial_x U+B(U)\partial_y U=S(U),
\end{equation}
where
\begin{equation*}
    U=(h,hu_m,hv_m,h\alpha_1,h\beta_1,\ldots,h\alpha_N,h\beta_N)^T,
    \qquad h>0.
\end{equation*}

The case $N=0$ recovers the classical shallow water equations.
Following \cite{Bauerle2025}, we decompose the coefficient matrices into conservative and non-conservative parts:
\begin{equation*}
    A(U)=\partial_UF(U)+P(U),
    \qquad
    B(U)=\partial_UG(U)+Q(U),
\end{equation*}
with the physical flus
\begin{equation*}
\begin{aligned}
F(U)=\Big(&hu_m,
\;h\left(u_m^2+\sum_{j=1}^N\frac{\alpha_j^2}{2j+1}\right)+\frac12 Gh^2,
\;h\left(u_mv_m+\sum_{j=1}^N\frac{\alpha_j\beta_j}{2j+1}\right), \\
&h\left(2u_m\alpha_1+\sum_{j,k=1}^NA_{1jk}\alpha_j\alpha_k\right),
\;h\left(u_m\beta_1+v_m\alpha_1+\sum_{j,k=1}^NA_{1jk}\alpha_j\beta_k\right),
\ldots, \\
&h\left(2u_m\alpha_N+\sum_{j,k=1}^NA_{Njk}\alpha_j\alpha_k\right),
\;h\left(u_m\beta_N+v_m\alpha_N+\sum_{j,k=1}^NA_{Njk}\alpha_j\beta_k\right)\Big)^T,
\end{aligned}
\end{equation*}
and
\begin{equation*}
\begin{aligned}
G(U)=\Big(&hv_m,
\;h\left(u_mv_m+\sum_{j=1}^N\frac{\alpha_j\beta_j}{2j+1}\right),
\;h\left(v_m^2+\sum_{j=1}^N\frac{\beta_j^2}{2j+1}\right)+\frac12 Gh^2, \\
&h\left(u_m\beta_1+v_m\alpha_1+\sum_{j,k=1}^NA_{1jk}\alpha_j\beta_k\right),
\;h\left(2v_m\beta_1+\sum_{j,k=1}^NA_{1jk}\beta_j\beta_k\right),
\ldots, \\
&h\left(u_m\beta_N+v_m\alpha_N+\sum_{j,k=1}^NA_{Njk}\alpha_j\beta_k\right),
\;h\left(2v_m\beta_N+\sum_{j,k=1}^NA_{Njk}\beta_j\beta_k\right)\Big)^T.
\end{aligned}
\end{equation*}

The non-conservative matrices are written as
\begin{equation*}
    P(U)=P_1(U)+P_2(U),
    \qquad
    Q(U)=Q_1(U)+Q_2(U).
\end{equation*}
The first part is
\begin{equation}\label{eq:matrix-non-conservative-part1-x}
    P_1(U)=\diag(0_{3\times3},p(U),\ldots,p(U)),
\end{equation}
\begin{equation*}
    Q_1(U)=\diag(0_{3\times3},q(U),\ldots,q(U)),
\end{equation*}
where
\begin{equation}\label{eq:matrix-small-p-q}
    p(U)=\begin{pmatrix}-u_m&0\\-v_m&0\end{pmatrix},
    \qquad
    q(U)=\begin{pmatrix}0&-u_m\\0&-v_m\end{pmatrix}.
\end{equation}
The second part contains the non-conservative products involving $D_i$:
\begin{equation*}
    P_2(U)=\diag(0_{3\times3},\mathcal{G}(U)),
    \qquad
    Q_2(U)=\diag(0_{3\times3},\mathcal{H}(U)),
\end{equation*}
where $\mathcal{G}(U)=(g_{ij})_{1\le i,j\le N}$ and $\mathcal{H}(U)=(h_{ij})_{1\le i,j\le N}$ are block matrices with $2\times2$ blocks
\begin{equation*}
    g_{ij}(U)=
    \begin{pmatrix}
        \sum_kB_{ijk}\alpha_k&0\\
        \sum_kB_{ijk}\beta_k&0
    \end{pmatrix},
    \qquad
    h_{ij}(U)=
    \begin{pmatrix}
        0&\sum_kB_{ijk}\alpha_k\\
        0&\sum_kB_{ijk}\beta_k
    \end{pmatrix}.
\end{equation*}
Finally, the source term in \eqref{eq:SWME} is
\begin{align*}
S(U)=\Big(&0,
-\fl{\gamma}{\varepsilon}u_b^{(N)}-Gh\fl{\pa h_b}{\pa x},
-\fl{\gamma}{\varepsilon}v_b^{(N)}-Gh\fl{\pa h_b}{\pa y}, \\
&-3\left(\fl{\gamma}{\varepsilon}u_b^{(N)}+\fl{\iRe}{\varepsilon h}\sum_{j=1}^{N}C_{1j}\alpha_j\right),
-3\left(\fl{\gamma}{\varepsilon}v_b^{(N)}+\fl{\iRe}{\varepsilon h}\sum_{j=1}^{N}C_{1j}\beta_j\right),
\ldots, \\
&-(2N+1)\left(\fl{\gamma}{\varepsilon}u_b^{(N)}+\fl{\iRe}{\varepsilon h}\sum_{j=1}^{N}C_{Nj}\alpha_j\right),
-(2N+1)\left(\fl{\gamma}{\varepsilon}v_b^{(N)}+\fl{\iRe}{\varepsilon h}\sum_{j=1}^{N}C_{Nj}\beta_j\right)\Big)^T.
\end{align*}

{
\section{Two-dimensional SWLME and hyperbolicity}
\label{sec:hyperbolicity}
}

{
\subsection{Shallow water linearized moment equations}
\label{subsec:SWLME}

For shallow water moment equations of order $N>1$, nonlinear moment couplings in the conservative fluxes and nonconservative terms complicate the characterization of steady states, even for hyperbolic regularizations.
Based on the insight from the SWME with $N=1$, Koellermeier and Pimentel-Garc\'ia \cite{Koellermeier2022} proposed the one-dimensional SWLME.
Assuming small deviations from a constant profile, i.e., $\alpha_k=O(\varepsilon)$, allows us to neglect moment couplings like $\alpha_i\alpha_j=O(\varepsilon^2)$ in the higher-moment equations.
This leads to a hyperbolic system that admits an implicit description of smooth frictionless moving equilibria over non-flat topography.
However, these steady states cannot be extended directly to the case with bottom friction; here, we extend the setting to the case of two horizontal dimensions with a Navier-slip bottom condition (i.e., frictional) and discuss how to find a (discrete) steady state later in Section~\ref{sec:well-balanced}.

For the two-dimensional extension, we apply the same approximation to both moment families $\alpha_k$ and $\beta_k$, retaining the mass and mean-momentum equations and simplifying the higher-moment equations.
Specifically, we omit the quadratic higher-mode transport terms involving $A_{ijk}$ and the corresponding non-conservative couplings involving $B_{ijk}$, but retain the complete quadratic moment tensor in the mean-momentum fluxes.
The retained terms $\alpha_j^2$, $\alpha_j\beta_j$, and $\beta_j^2$ arise from the exact vertical averages of $\widetilde{u}^2$, $\widetilde{u}\widetilde{v}$, and $\widetilde{v}^2$, respectively, and together form a rotationally covariant tensor.
Thus, ``linearized'' refers to selected higher-moment couplings; the quadratic mean-momentum fluxes remain.

The conservative fluxes of the linearized moment system are
\begin{equation*}
\begin{aligned}
F_L(U)=\Big(&hu_m,
\;h\left(u_m^2+\sum_{j=1}^N\frac{\alpha_j^2}{2j+1}\right)+\frac12 Gh^2,
\;h\left(u_mv_m+\sum_{j=1}^N\frac{\alpha_j\beta_j}{2j+1}\right), \\
&h(2u_m\alpha_1),
\;h(u_m\beta_1+v_m\alpha_1),
\ldots,
\;h(2u_m\alpha_N),
\;h(u_m\beta_N+v_m\alpha_N)\Big)^T,
\end{aligned}
\end{equation*}
and
\begin{equation*}
\begin{aligned}
G_L(U)=\Big(&hv_m,
\;h\left(u_mv_m+\sum_{j=1}^N\frac{\alpha_j\beta_j}{2j+1}\right),
\;h\left(v_m^2+\sum_{j=1}^N\frac{\beta_j^2}{2j+1}\right)+\frac12 Gh^2, \\
&h(u_m\beta_1+v_m\alpha_1),
\;h(2v_m\beta_1),
\ldots,
\;h(u_m\beta_N+v_m\alpha_N),
\;h(2v_m\beta_N)\Big)^T.
\end{aligned}
\end{equation*}
The remaining non-conservative terms are $P_{1}(U)$ and $Q_{1}(U)$ from \eqref{eq:matrix-non-conservative-part1-x}--\eqref{eq:matrix-small-p-q}.
Hence, the coefficient matrices are
\begin{equation}\label{eq:AL-BL-def}
    A_L(U) = \partial_{U} F_{L}(U) + P_{1}(U), \qquad B_{L}(U) = \partial_{U} G_{L}(U) + Q_{1}(U).
\end{equation}
We refer to the resulting system as the shallow water linearized moment equations (SWLME):
\begin{equation}\label{eq:SWLME}
    \partial_tU+A_L(U)\partial_xU+B_L(U)\partial_yU=S(U).
\end{equation}
The source $S(U)$ is unchanged by the linearization.
For $\gamma=0$, zero initial and boundary moments remain zero, reducing the system to the classical two-dimensional shallow water equations.
}

\subsection{Rotational invariance}
We consider the state space $\mathcal{A}_N=\{U\in\mathbb{R}^{2N+3}:h>0\}$.
Following \cite{Bauerle2025}, we use rotational invariance to reduce the hyperbolicity analysis to one coordinate direction.
For an angle $\theta\in[0,2\pi)$, define the rotation matrix
\begin{equation*}
    T_2(\theta)=
    \begin{pmatrix}
        \cos\theta & \sin\theta \\
        -\sin\theta & \cos\theta
    \end{pmatrix}
    \in\mathbb{R}^{2\times2}.
\end{equation*}
For the state vector
\begin{equation*}
    U=(h,hu_m,hv_m,h\alpha_1,h\beta_1,\ldots,h\alpha_N,h\beta_N)^T
    \in\mathbb{R}^{2N+3},
\end{equation*}
we introduce the block-diagonal rotation matrix
\begin{equation*}
    T(\theta)=\diag\big(1,T_2(\theta),T_2(\theta),\ldots,T_2(\theta)\big)
    \in\mathbb{R}^{(2N+3)\times(2N+3)}.
\end{equation*}
Thus, $T(\theta)$ leaves the scalar variable $h$ unchanged and rotates each horizontal vector pair $(hu_m,hv_m)^T$ and $(h\alpha_j,h\beta_j)^T$ in the same way.

\begin{definition}[rotational invariance of the coefficient matrices]
Consider a two-dimensional first-order system
\begin{equation}\label{eq:general-rot-system}
    \partial_t U+A(U)\partial_xU+B(U)\partial_yU=S(U,x,y).
\end{equation}
We say that the coefficient matrices $A(U)$ and $B(U)$ satisfy the rotational-invariance property if, for every $\theta\in[0,2\pi)$ and every admissible state $U$,
\begin{equation}\label{eq:rot-invariance-matrix}
    \cos\theta\,A(U)+\sin\theta\,B(U)
    =T(\theta)^{-1}A(T(\theta)U)T(\theta).
\end{equation}
\end{definition}

\begin{remark}\label{rem:source-rotational-invariance}
The full system \eqref{eq:general-rot-system} is rotationally invariant only if the source term also transforms consistently, namely
\begin{equation*}
    T(\theta)S(U,X)=S(T(\theta)U,T_2(\theta)X),
    \qquad X=(x,y)^T.
\end{equation*}
Isotropic sources, including bottom friction, satisfy this condition because their velocity and moment pairs rotate as horizontal vectors.
The topography source transforms consistently when the bottom is rotated as a scalar field.
Hyperbolicity depends only on the principal part, so we use the matrix identity \eqref{eq:rot-invariance-matrix}.
\end{remark}

For any unit direction $n=(\cos\theta,\sin\theta)^T$, the directional coefficient matrix is
\begin{equation*}
    A_n(U)=\cos\theta\,A(U)+\sin\theta\,B(U).
\end{equation*}
By \eqref{eq:rot-invariance-matrix}, $A_n(U)$ is similar to the $x$-direction matrix $A(T(\theta)U)$.
Hence, $A_n(U)$ and $A(T(\theta)U)$ have the same eigenvalues and the same diagonalizability properties.
Consequently, once rotational invariance is known, the two-dimensional hyperbolicity analysis reduces to the analysis of the $x$-direction coefficient matrix.

\begin{definition}[global hyperbolicity]\label{def:global-hyperbolicity}
The principal part is globally hyperbolic on $\mathcal{A}_N$ if $A_n(U)$ is diagonalizable over $\mathbb{R}$ for every $U\in\mathcal{A}_N$ and $n\in\mathbb{S}^1$.
\end{definition}

This pointwise definition does not require a uniformly bounded diagonalizer or uniformly positive symmetrizer as $h\to0^+$ or when vanishing directional moments change the eigenvalue multiplicities.

\begin{theorem}[rotational invariance of the SWLME]
\label{thm:rot-SWLME}
The coefficient matrices $A_L(U)$ and $B_L(U)$ of the SWLME \eqref{eq:SWLME} satisfy
\begin{equation}\label{eq:rot-SWLME}
    \cos\theta\,A_L(U)+\sin\theta\,B_L(U)
    =T(\theta)^{-1}A_L(T(\theta)U)T(\theta),
\end{equation}
for all $\theta\in[0,2\pi)$ and all admissible states $U$.
\end{theorem}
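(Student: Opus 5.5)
We split $A_L=\partial_UF_L+P_1$ and $B_L=\partial_UG_L+Q_1$ and prove the identity \eqref{eq:rot-SWLME} separately for the conservative Jacobians and for the nonconservative parts. Since the map $U\mapsto T(\theta)U$ is linear, the chain rule reduces the Jacobian part to an identity between the fluxes themselves.

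\textbf{Step 1 (flux covariance).} We first show
\begin{equation*}
    \cos\theta\,F_L(U)+\sin\theta\,G_L(U)=T(\theta)^{-1}F_L(T(\theta)U).
\end{equation*}
Differentiating this in $U$ gives $\cos\theta\,\partial_UF_L(U)+\sin\theta\,\partial_UG_L(U)=T^{-1}(\partial_UF_L)(TU)\,T$. The covariance itself is a tensor statement. Set $\mathbf{u}=(u_m,v_m)^T$ and $\mathbf{a}_j=(\alpha_j,\beta_j)^T$.

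The mass flux is $h\,\mathbf{u}\cdot n$. The mean-momentum flux is $h\,\mathcal{M}n+\tfrac{G}{2}h^2n$, where
\begin{equation*}
    \mathcal{M}=\mathbf{u}\mathbf{u}^T+\sum_j\frac{\mathbf{a}_j\mathbf{a}_j^T}{2j+1}.
\end{equation*}
The $j$-th moment flux is $h(\mathbf{a}_j\mathbf{u}^T+\mathbf{u}\mathbf{a}_j^T)n$. To check the $x$-component of the first and third, note that $h(2u_m\alpha_j,\;u_m\beta_j+v_m\alpha_j)^T$ is the first column of $h(\mathbf{a}_j\mathbf{u}^T+\mathbf{u}\mathbf{a}_j^T)$.

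Let $R=T_2(\theta)$, so that $R^Te_1=n$. Rotating the state replaces $\mathbf{u}\to R\mathbf{u}$ and $\mathbf{a}_j\to R\mathbf{a}_j$, and hence every symmetric tensor $S$ above becomes $RSR^T$. The $x$-flux of the rotated state is then $RSR^Te_1=RSn$, and applying $R^{-1}$ yields $Sn$. This is exactly the combination $\cos\theta\,F_L+\sin\theta\,G_L$. The scalar mass component works the same way.

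The essential point is that linearization removed the $A_{ijk}$ terms but kept the full tensors $\mathbf{a}_j\mathbf{a}_j^T$ and the symmetrized $\mathbf{a}_j\mathbf{u}^T+\mathbf{u}\mathbf{a}_j^T$. All retained terms are therefore covariant.

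\textbf{Step 2 (nonconservative part).} Both $P_1$ and $Q_1$ are block diagonal, acting on each pair $(h\alpha_i,h\beta_i)$, so it suffices to verify on a single $2\times2$ block that
\begin{equation*}
    \cos\theta\,p(U)+\sin\theta\,q(U)=R^{-1}p(TU)R.
\end{equation*}
By \eqref{eq:matrix-small-p-q}, $p(U)=-\mathbf{u}e_1^T$ and $q(U)=-\mathbf{u}e_2^T$. The left side is therefore $-\mathbf{u}n^T$.

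On the right, $p(TU)=-R\mathbf{u}e_1^T$, so
\begin{equation*}
    R^{-1}p(TU)R=-\mathbf{u}\,e_1^TR=-\mathbf{u}(R^Te_1)^T=-\mathbf{u}n^T.
\end{equation*}
The zero $3\times3$ block is trivially invariant.

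\textbf{Step 3.} Adding Steps 1 and 2 gives \eqref{eq:rot-SWLME}. The only delicate point is bookkeeping: the component ordering $(h\alpha_i,h\beta_i)$ must match the block structure of $T(\theta)$, and in the Jacobian form the convention $R^Te_1=n$ (rather than $Re_1$) must be respected. I expect Step 1's identification of each flux component with a column of a symmetric tensor to be the main check. For that I would verify the $\alpha_j\beta_j/(2j+1)$ cross terms explicitly.
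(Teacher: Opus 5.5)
Your proposal is correct and follows essentially the same route as the paper's Appendix~A: establish $\cos\theta\,F_L(U)+\sin\theta\,G_L(U)=T(\theta)^{-1}F_L(T(\theta)U)$, differentiate using that $T(\theta)$ is independent of $U$, verify the $2\times2$ block identity for $p,q$, and add the two results. The only difference is presentational. You check the flux identity through the symmetric tensors $\mathcal{M}$ and $\mathbf{a}_j\mathbf{u}^T+\mathbf{u}\mathbf{a}_j^T$, and you write $p=-\mathbf{u}e_1^T$, $q=-\mathbf{u}e_2^T$. The paper instead uses component-by-component trigonometric identities; your version is more compact but checks the same facts.
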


\begin{proof}
The proof follows the conservative/non-conservative splitting argument used for the shallow water moment equations in \cite{Bauerle2025}.
The conservative fluxes $F_L(U)$ and $G_L(U)$ satisfy the rotational-invariance identity, and the non-conservative matrices $P_1(U)$ and $Q_1(U)$ satisfy the corresponding block rotational-invariance identity.
Combining these two identities with
\begin{equation*}
    A_L(U)=\partial_UF_L(U)+P_1(U),
    \qquad
    B_L(U)=\partial_UG_L(U)+Q_1(U),
\end{equation*}
gives \eqref{eq:rot-SWLME}.
The detailed proof is provided in \hyperref[theorem:proof]{Appendix A}.
\end{proof}

The globally hyperbolic model in Section~\ref{sec:sub:newmodel} is constructed by modifying both directional coefficient matrices while preserving rotational invariance.
{We recall the block characterization of Bauerle et al.~\cite{Bauerle2025}, which specifies the required compatibility.}

\begin{definition}[rotationally consistent $2\times2$ blocks]
Let $V=(p,q)^T\in\mathbb{R}^2$, and let $A_b(V),B_b(V)\in\mathbb{R}^{2\times2}$.
We say that the pair $(A_b,B_b)$ is rotationally consistent if
\begin{equation}\label{eq:rot-block-definition}
    T_2(\theta)^{-1}A_b(T_2(\theta)V)T_2(\theta)
    =\cos\theta\,A_b(V)+\sin\theta\,B_b(V)
\end{equation}
holds for all $\theta\in[0,2\pi)$ and all $V\in\mathbb{R}^2$.
\end{definition}

\begin{theorem}[{Bauerle et al.~\cite{Bauerle2025}}]\label{thm:rot-2}
Assume that $A_b(V)$ and $B_b(V)$ are homogeneous linear functions of $V=(p,q)^T$ and satisfy the block rotational-invariance relation \eqref{eq:rot-block-definition}.
Then there exist constants $c_i\in\mathbb{R}$, $i=1,\ldots,6$, such that
\begin{equation}\label{eq:block-A-form}
\begin{aligned}
A_b(V)={}&
    c_1\begin{pmatrix}p&0\\q&0\end{pmatrix}
    +c_2\begin{pmatrix}q&0\\-p&0\end{pmatrix}
    +c_3\begin{pmatrix}p&0\\0&p\end{pmatrix}
    +c_4\begin{pmatrix}q&0\\0&q\end{pmatrix} \\
&\quad
    +c_5\begin{pmatrix}0&p\\0&q\end{pmatrix}
    +c_6\begin{pmatrix}0&q\\0&-p\end{pmatrix},
\end{aligned}
\end{equation}
and
\begin{equation}\label{eq:block-B-form}
\begin{aligned}
B_b(V)={}&
    c_1\begin{pmatrix}0&p\\0&q\end{pmatrix}
    +c_2\begin{pmatrix}0&q\\0&-p\end{pmatrix}
    +c_3\begin{pmatrix}q&0\\0&q\end{pmatrix}
    -c_4\begin{pmatrix}p&0\\0&p\end{pmatrix} \\
&\quad
    -c_5\begin{pmatrix}p&0\\q&0\end{pmatrix}
    +c_6\begin{pmatrix}-q&0\\p&0\end{pmatrix}.
\end{aligned}
\end{equation}
Conversely, any pair of matrices of the form \eqref{eq:block-A-form}--\eqref{eq:block-B-form} satisfies \eqref{eq:rot-block-definition}.
\end{theorem}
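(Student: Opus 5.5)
The plan is to treat \eqref{eq:rot-block-definition} as a linear constraint on the finite-dimensional space of pairs $(A_b,B_b)$ whose entries are homogeneous linear in $V$. Such a pair is determined by $2\times 8=16$ real coefficients. The expected outcome is that the constraints cut this down to the 6-dimensional family displayed in \eqref{eq:block-A-form}--\eqref{eq:block-B-form}. I would prove the converse direction first, by direct verification, and then the forward direction by a dimension and uniqueness argument.

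For the converse, linearity in the coefficients means it is enough to check each of the six basis pairs separately. Write $T_2=T_2(\theta)$, $c=\cos\theta$, $s=\sin\theta$, and $V'=T_2V=(cp+sq,\,-sp+cq)^T$. For the $c_3$-pair, $A_b(V)=pI$ and $B_b(V)=qI$, so $T_2^{-1}A_b(V')T_2=(cp+sq)I=cA_b+sB_b$. The $c_4$-pair works the same way, since $qI$ maps to $(-sp+cq)I=c\,(qI)+s\,(-pI)$. For the $c_1$-pair, $A_b(V)=Ve_1^T$, and $T_2^{-1}V'e_1^TT_2=V(T_2^Te_1)^T=V(c,s)$. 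That equals $c\,Ve_1^T+s\,Ve_2^T$, which matches since $B_b=Ve_2^T$. For the $c_5$-pair, $A_b=Ve_2^T$ gives $V(T_2^Te_2)^T=V(-s,c)=cVe_2^T-sVe_1^T$, again as claimed. The $c_2$- and $c_6$-pairs follow in the same way after replacing $V$ by $JV$, where $J=\begin{pmatrix}0&1\\-1&0\end{pmatrix}$. The key facts are that $J$ commutes with $T_2$ and that $T_2^{-1}JV'=JV$.

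For the forward direction, I would first show that the relation determines $B_b$ from $A_b$. Differentiating \eqref{eq:rot-block-definition} at $\theta=0$ gives $B_b(V)=\frac{d}{d\theta}\big|_0 T_2^{-1}A_b(T_2V)T_2$. In particular, setting $\theta=\pi/2$ already gives $B_b$ explicitly in terms of $A_b$. So the real content is a constraint on $A_b$ alone. The relation at $\theta$ and at $\theta+\phi$ must be consistent with the group law $T_2(\theta+\phi)=T_2(\phi)T_2(\theta)$. Rewriting, the map $\theta\mapsto T_2^{-1}A_b(T_2V)T_2$ must be a pure first Fourier mode $c\,X+s\,Y$ in $\theta$, with no constant term and no $\cos2\theta$, $\sin2\theta$, or higher harmonics.

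The next step is to decompose. The space of linear maps $V\mapsto A_b(V)$ is $\mathbb{R}^2\otimes M_2(\mathbb{R})$, which is 8-dimensional, and $SO(2)$ acts on it by conjugation together with rotation of $V$. Complexify $V$ into weight $\pm1$ and $M_2$ into weights $0,0,\pm2$ (the scalar and skew parts, plus the traceless symmetric part). The tensor product then has weights $\pm1$ (with multiplicity 4 in total, counting both signs, from $\pm1\otimes 0$), together with $\pm3$ and $\pm1$ from $\pm1\otimes\mp2$. The requirement of a pure first harmonic kills the weight $\pm3$ components. This leaves $8-2=6$ real dimensions, matching $c_1,\dots,c_6$. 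Since the six explicit $A_b$ basis elements are linearly independent and, by the converse, lie in this space, they span it. The main obstacle is making this weight count rigorous without heavy machinery. If it gets messy, I would instead parametrize $A_b$ by 8 coefficients, impose the relation at $\theta=\pi/2$ and $\theta=\pi/4$, and solve the resulting linear system directly to get a 6-dimensional kernel.
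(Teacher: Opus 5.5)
Your argument is correct. The paper gives no proof of this statement; it is quoted from Bauerle et al.~\cite{Bauerle2025}. The natural comparison is therefore the direct route: write out the $16$ coefficients of $(A_b,B_b)$, expand both sides of \eqref{eq:rot-block-definition} as cubic trigonometric polynomials in $(\cos\theta,\sin\theta)$, and match coefficients.

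Your route differs in three ways, writing $A_i,B_i$ for the six basis pairs. First, you verify the converse compactly through the forms $pI$, $qI$, $Ve_k^T$, $(JV)e_k^T$, using that $J=T_2(\pi/2)$ commutes with $T_2(\theta)$. Second, you observe that $B_b(V)=J^{-1}A_b(JV)J$ is forced, so all constraints fall on $A_b$. Third, you identify the admissible $A_b$ with the first-harmonic isotypic component of the action $\rho(\theta)A=T_2(\theta)^{-1}A(T_2(\theta)\,\cdot\,)T_2(\theta)$ on the $8$-dimensional space of linear maps $\mathbb{R}^2\to M_2(\mathbb{R})$.

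What this buys is an explanation of the count $6=8-2$. The only obstruction is a two-dimensional third-harmonic component, spanned for instance by $\begin{pmatrix}p&-q\\-q&-p\end{pmatrix}$ and $\begin{pmatrix}q&p\\p&-q\end{pmatrix}$. The argument also transfers to other tensorial block structures. Brute force is more elementary but gives no such insight.

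The weight count you flag as the main obstacle is a one-line character computation. The trace of $\rho(\theta)$ is $2\cos\theta\cdot4\cos^2\theta=8\cos^3\theta=6\cos\theta+2\cos3\theta$. On a $k$-th harmonic block, $\rho(\theta)=\cos(k\theta)I+\sin(k\theta)\mathcal{J}_k$ with $\mathcal{J}_k^2=-I$, hence $\operatorname{tr}\mathcal{J}_k=0$. So the first- and third-harmonic components have dimensions $6$ and $2$, and there is no invariant part.

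Your fallback with $\theta=\pi/2$ and $\theta=\pi/4$ also suffices. On the third-harmonic part, the $\pi/4$ condition reduces to $(\mathcal{J}_3-I)A_3=0$, and $\mathcal{J}_3-I$ is invertible.

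Finally, make the last step explicit. Once $A_b=\sum_i c_iA_i$, uniqueness of $B_b$ given $A_b$, together with the converse applied to $(\sum_i c_iA_i,\sum_i c_iB_i)$, yields $B_b=\sum_i c_iB_i$ with the same constants.
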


\begin{theorem}[block closure preserving rotational invariance]
\label{thm:block-closure-rot}
Let $A_L(U)$ and $B_L(U)$ be the coefficient matrices of the SWLME.
Suppose that additional matrices $\Delta A(U)$ and $\Delta B(U)$ have zero entries in the first row and first column associated with the scalar variable $h$, and that each $2\times2$ block acting on the rotated vector pairs $(hu_m,hv_m)^T,(h\alpha_1,h\beta_1)^T,\ldots,(h\alpha_N,h\beta_N)^T$ is rotationally consistent in the sense of \eqref{eq:rot-block-definition}.
Then the modified matrices
\begin{equation*}
    A_G(U)=A_L(U)+\Delta A(U),
    \qquad
    B_G(U)=B_L(U)+\Delta B(U)
\end{equation*}
also satisfy
\begin{equation}\label{eq:rot-AG-BG}
    \cos\theta\,A_G(U)+\sin\theta\,B_G(U)
    =T(\theta)^{-1}A_G(T(\theta)U)T(\theta).
\end{equation}
\end{theorem}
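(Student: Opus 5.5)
By linearity, the statement reduces to an identity for the correction matrices alone. Since $A_G=A_L+\Delta A$ and $B_G=B_L+\Delta B$, Theorem~\ref{thm:rot-SWLME} already gives the identity for $A_L,B_L$. It therefore suffices to show
\begin{equation*}
    \cos\theta\,\Delta A(U)+\sin\theta\,\Delta B(U)
    =T(\theta)^{-1}\Delta A(T(\theta)U)T(\theta),
\end{equation*}
and then add the two identities. I will prove this by block decomposition, following the structure of $T(\theta)$.

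Index the unknowns as $h$ followed by the $N+1$ vector pairs $W_0=(hu_m,hv_m)^T$ and $W_j=(h\alpha_j,h\beta_j)^T$. Write $\Delta A=(\Delta A^{(k\ell)})$ and $\Delta B=(\Delta B^{(k\ell)})$, with $2\times2$ blocks for $k,\ell=0,\ldots,N$; the row and column belonging to $h$ are zero. Because $T(\theta)=\diag(1,T_2,\ldots,T_2)$ is block diagonal and $T(\theta)^{-1}=\diag(1,T_2^{-1},\ldots,T_2^{-1})$, the conjugation acts blockwise:
\begin{equation*}
    \big(T(\theta)^{-1}\Delta A(T(\theta)U)T(\theta)\big)^{(k\ell)}=T_2(\theta)^{-1}\Delta A^{(k\ell)}(T(\theta)U)\,T_2(\theta).
\end{equation*}
The zero $h$-row and $h$-column stay zero under this conjugation, and $\cos\theta\cdot0+\sin\theta\cdot0=0$, so those entries match trivially.

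For each block pair, I will apply the hypothesis. The hypothesis says each pair $(\Delta A^{(k\ell)},\Delta B^{(k\ell)})$ is rotationally consistent in the sense of \eqref{eq:rot-block-definition}, viewed as a function of the vector pair $V$ it depends on. Typically $V$ is a velocity or moment pair such as $(u_m,v_m)$ or $(\alpha_j,\beta_j)$, possibly multiplied by scalar functions of $h$. The key observation is that $T(\theta)U$ rotates every such pair by the same $T_2(\theta)$ and leaves $h$ fixed. Hence $V(T(\theta)U)=T_2(\theta)V(U)$. Dividing $hu_m$ by the unchanged $h$ preserves the rotation, so this holds for the primitive pairs as well as the conserved ones. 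Substituting into \eqref{eq:rot-block-definition} gives, for each block,
\begin{equation*}
    T_2(\theta)^{-1}\Delta A^{(k\ell)}(T(\theta)U)T_2(\theta)=\cos\theta\,\Delta A^{(k\ell)}(U)+\sin\theta\,\Delta B^{(k\ell)}(U).
\end{equation*}
Assembling all blocks then yields the reduced identity above.

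The main obstacle is interpretive rather than computational. I need to make precise what ``each block is rotationally consistent'' means when a block depends on several pairs, or on $h$ as well. My plan is to read the hypothesis as allowing each block to be a finite sum of terms. Each term is a scalar function of $h$ times a rotationally consistent pair in one vector argument $V_m$ with $V_m(T(\theta)U)=T_2(\theta)V_m(U)$. Relation \eqref{eq:rot-block-definition} is linear in the pair $(A_b,B_b)$, and the scalar factors are rotation invariant, so the argument extends term by term. If a block instead uses a different homogeneous form, Theorem~\ref{thm:rot-2} is available to confirm that its pair has the form \eqref{eq:block-A-form}--\eqref{eq:block-B-form} and is therefore consistent; the argument never needs more than this.
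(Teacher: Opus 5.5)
Your proposal is correct and follows the paper's own route: use Theorem~\ref{thm:rot-SWLME} and linearity to reduce to the identity for $\Delta A,\Delta B$ alone, verify that identity blockwise via the block-diagonal structure of $T(\theta)$ and the rotational consistency of each $2\times2$ block, then add the two identities. You also make explicit two points the paper leaves implicit, namely that the zero $h$-row and $h$-column survive the conjugation, and that each block argument $V$ rotates by $T_2(\theta)$ because $h$ is left unchanged.
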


\begin{proof}
The result follows from linearity.
The original matrices $A_L(U)$ and $B_L(U)$ satisfy \eqref{eq:rot-SWLME}.
Each added $2\times2$ block satisfies the corresponding block relation \eqref{eq:rot-block-definition}.
Since $T(\theta)$ is block diagonal with the same $T_2(\theta)$ acting on each vector pair, assembling these block identities gives the rotational-invariance identity for $\Delta A(U)$ and $\Delta B(U)$.
Adding this identity to \eqref{eq:rot-SWLME} gives \eqref{eq:rot-AG-BG}.
\end{proof}

By Theorem~\ref{thm:rot-SWLME}, it suffices to analyze the $x$-direction matrix at every admissible state.
We show that the direct extension is not globally hyperbolic and construct a rotationally invariant modification that restores this property.

\subsection{Hyperbolicity of the unmodified SWLME}
\label{sec:sub:hyper-SWLME}

Reorder the state variables as
\begin{equation}\label{def:tildeU}
    \tU = (h, hu_m, h\ap_1, h\ap_2,\cdots, h\ap_N, hv_m, h\beta_1, h\beta_2, \cdots, h\beta_N)^{T}.
\end{equation}

This reordering preserves rotational invariance and hyperbolicity.
In the ordering \eqref{def:tildeU}, the $x$-direction coefficient matrix $A_L$ in \eqref{eq:SWLME} has the block lower triangular form
\begin{equation}\label{eq:AL-block}
    \tA_L(\tU) = \begin{pmatrix}
        \tA_{11}(\tU) &0 \\
        \tA_{21}(\tU) & \tA_{22}(\tU)
    \end{pmatrix},
\end{equation}
where the block matrices $\tA_{11}\in\mathR^{(N+2)\times(N+2)}$, $\tA_{21}\in\mathR^{(N+1)\times(N+2)}$, and $\tA_{22}\in\mathR^{(N+1)\times(N+1)}$ are given by

\begin{equation*}
\tA_{11} =
\begin{pmatrix}
    0 & 1 & 0 & 0 & \cdots & 0 \\
    Gh-u_m^2-\sum_{j=1}^N\frac{\alpha_j^2}{2j+1} & 2u_m & \frac{2\alpha_1}{3} & \frac{2\alpha_2}{5} & \cdots & \frac{2\alpha_N}{2N+1} \\
    -2u_m\alpha_1 & 2\alpha_1 &u_m & 0 & \cdots & 0 \\
    -2u_m\alpha_2 & 2\alpha_2 & 0 & u_m & 0 & \cdots \\
    \vdots & \vdots & \vdots & \vdots & \ddots & \vdots \\
    -2u_m\alpha_N & 2\alpha_N & 0 & 0 & 0 & u_m
\end{pmatrix},
\end{equation*}

\begin{equation*}
\tA_{21} =
\begin{pmatrix}
    -u_m v_m-\sum_{j=1}^{N}\fl{\ap_j\beta_j}{2j+1} & v_m & \frac{\beta_1}{3} & \frac{\beta_2}{5} & \cdots & \frac{\beta_N}{2N+1} \\
    -(u_m\beta_1 + v_m\alpha_1) & \beta_1 & 0 & 0 & \cdots & 0 \\
    -(u_m\beta_2 + v_m\alpha_2) & \beta_2 & 0 & 0 & \cdots & 0 \\
    \vdots & \vdots & \vdots & \vdots & \ddots & \vdots \\
    -(u_m\beta_N + v_m\alpha_N) & \beta_N & 0 & 0 & 0 &0
\end{pmatrix},
\end{equation*}
and
\begin{equation*}
\tA_{22} =
\begin{pmatrix}
    u_m & \frac{\alpha_1}{3} & \frac{\alpha_2}{5} & \cdots & \frac{\alpha_N}{2N+1} \\
    \alpha_1 & u_m & 0 & \cdots & 0 \\
    \alpha_2 & 0 & u_m & \cdots & 0 \\
    \vdots & \vdots & \vdots & \ddots & \vdots \\
    \alpha_N & 0 & 0 & \cdots & u_m
\end{pmatrix}.
\end{equation*}
The block $\tA_{11}$ is the coefficient matrix of the one-dimensional SWLME, while $\tA_{21}$ contains the coupling from the $x$-moment variables to the $y$-moment variables.

Since \eqref{eq:AL-block} is block lower triangular,
\begin{equation*}
    \det(\tA_L - \lambda I) = \det(\tA_{11} - \lambda I) \det(\tA_{22} - \lambda I).
\end{equation*}
Thus, we only need to compute the characteristic polynomial of $\tA_{11}$ and $\tA_{22}$.
\begin{lem}
    Given a matrix $A\in\mathbb{R}^{(n+1)\times(n+1)}$ with only the first row, the first column, and the diagonal entries being non-zero,
    \begin{equation*}
        A =
    \begin{pmatrix}
        d_0 & a_1 & a_2 & \cdots & a_n \\
        b_1 & d_1 & 0 & \cdots & 0 \\
        b_2 & 0 & d_2 & \cdots & 0 \\
        \vdots & \vdots & \vdots & \ddots & \vdots \\
        b_n & 0 & 0 & \cdots & d_n
    \end{pmatrix},
    \end{equation*}
    Then the determinant of $A$ is given by
    \begin{equation*}
        \det(A) = d_0\prod_{i=1}^{n}d_i-\sum_{j=1}^{n}a_jb_j\prod_{\substack{i=1\\i\neq j}}^{n}d_i.
    \end{equation*}
\end{lem}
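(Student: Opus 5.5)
The plan is to expand the determinant directly from the Leibniz formula and use the sparsity pattern of the arrow matrix $A$. A permutation $\sigma$ of $\{0,1,\ldots,n\}$ contributes $\operatorname{sgn}(\sigma)\prod_{i}A_{i,\sigma(i)}$, and this product is nonzero only if every factor $A_{i,\sigma(i)}$ lies in the first row, the first column, or on the diagonal.

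First, for a row $i\ge1$, the only admissible entries are $A_{i,i}=d_i$ and $A_{i,0}=b_i$. Since $\sigma$ is a bijection, at most one row $i\ge1$ can map to column $0$. Second, we split into two cases.

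\begin{itemize}
\item If no row $i\ge1$ maps to column $0$, then every $i\ge1$ is fixed. Hence $\sigma(0)=0$ and $\sigma$ is the identity, which contributes $d_0\prod_{i=1}^n d_i$.
\item If exactly one row $j\ge1$ satisfies $\sigma(j)=0$, then all other rows $i\ge1$, $i\neq j$, are fixed. Column $j$ must then be covered by row $0$, so $\sigma$ is the transposition $(0\,j)$ with sign $-1$. It contributes $-a_j b_j\prod_{i\neq j,\,i\ge1} d_i$.
\end{itemize}

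Summing over the $n+1$ admissible permutations gives the stated formula.

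As a cross-check, one could also argue by induction on $n$ via cofactor expansion along the last row. That row has only the entries $b_n$ in column $0$ and $d_n$ on the diagonal. The $d_n$ minor is the same arrow matrix of size $n$. The $b_n$ minor, after moving its last column $(a_n,0,\ldots,0)^T$ to the front, is triangular with determinant $\pm a_n\prod_{i<n}d_i$. Tracking that sign is the only mildly delicate point, and it is the reason the permutation argument above is cleaner. The lemma imposes no invertibility assumption, so both arguments hold for all real entries, including zero $d_i$.

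The main (minor) obstacle is ensuring that the case enumeration is exhaustive, namely that no longer cycles survive. This follows because any cycle through column $0$ must return via row $0$, and all other rows can only be fixed points.
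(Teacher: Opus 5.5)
Your proof is correct. It uses the Leibniz expansion, whereas the paper's proof is the one-line instruction to apply the Laplace expansion along the first row.

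The two routes are closely related. If you group your admissible permutations by $\sigma(0)$, you recover exactly the first-row expansion:
\begin{itemize}
\item $\sigma(0)=0$ gives the $d_0$ cofactor term;
\item $\sigma(0)=j$ forces the transposition $(0\,j)$, which is the single nonzero term of the minor obtained by deleting row $0$ and column $j$.
\end{itemize}

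In the paper's route that minor has to be evaluated explicitly. Its row $j$ reduces to $(b_j,0,\ldots,0)$, so the minor equals $(-1)^{j+1}b_j\prod_{i\ne j}d_i$. Combining this with the outer cofactor sign $(-1)^j$ gives $-a_jb_j\prod_{i\ne j}d_i$. Your version gets the sign $-1$ directly as the parity of a transposition, and it makes the exhaustiveness of the case split explicit. The paper's route is the more standard citation but leaves the minor evaluation and sign bookkeeping to the reader.

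Your inductive cross-check along the last row is also valid, and the sign you left as $\pm$ is determined. The cofactor sign of the $(n,0)$ entry is $(-1)^n$, and moving the last column to the front is an $n$-cycle with sign $(-1)^{n-1}$. Their product is $-1$, which gives the term $-a_nb_n\prod_{i<n}d_i$.
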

\begin{proof}
    The determinant can be computed using the Laplace expansion along the first row.
\end{proof}

We first compute the characteristic polynomial of $\tA_{11}$:
\begin{equation*}
\begin{aligned}
    \det(\tA_{11} - \lambda I) &=
    \det
    \begin{pmatrix}
        -\lambda & 1 & 0 & 0 & \cdots & 0 \\
        Gh-u_m^2-\sum_{j=1}^N\frac{\alpha_j^2}{2j+1} & 2u_m - \lambda & \frac{2\alpha_1}{3} & \frac{2\alpha_2}{5} & \cdots & \frac{2\alpha_N}{2N+1} \\
        -2u_m\alpha_1 & 2\alpha_1 & u_m - \lambda & 0 & \cdots & 0 \\
        -2u_m\alpha_2 & 2\alpha_2 & 0 & u_m - \lambda & 0 & \cdots \\
        \vdots & \vdots & \vdots & \vdots & \ddots & \vdots \\
        -2u_m\alpha_N & 2\alpha_N & 0 & 0 & 0 & u_m - \lambda
    \end{pmatrix} \\
    &= (u_m-\lambda)^N \brac{(\lambda-u_m)^2 - Gh - \sum_{j=1}^{N}\frac{3\alpha_j^2}{(2j+1)}},
\end{aligned}
\end{equation*}
Therefore, the eigenvalue of $\tA_{11}$ is given by
\begin{equation}\label{eig:a11}
\lambda_{1,2} = u_m \pm \sqrt{Gh + \sum_{j=1}^{N}\frac{3\alpha_j^2}{(2j+1)}}, \quad \lambda_{i} = u_m, \, i=3, \cdots, N+2.
\end{equation}

The characteristic polynomial of $\tA_{22}$ is
\begin{equation*}
\begin{aligned}
\det(\tA_{22} - \lambda I) ={}& \det
\begin{pmatrix}
    u_m - \lambda & \frac{\alpha_1}{3} & \frac{\alpha_2}{5} & \cdots & \frac{\alpha_N}{2N+1} \\
    \alpha_1 & u_m - \lambda & 0 & \cdots & 0 \\
    \alpha_2 & 0 & u_m - \lambda & \cdots & 0 \\
    \vdots & \vdots & \vdots & \ddots & \vdots \\
    \alpha_N & 0 & 0 & \cdots & u_m - \lambda
\end{pmatrix} \\
={}& (u_m - \lambda)^{N-1} \brac{(u_m - \lambda)^2 - \sum_{j=1}^{N} \frac{\alpha_j^2}{2j+1}},
\end{aligned}
\end{equation*}
Therefore, the eigenvalues of $\tA_{22}$ are
\begin{equation}\label{eig:a22}
\lambda_{i} = u_m, \, i = 1, \cdots, N-1, \quad \lambda_{N,N+1} = u_m \pm \sqrt{\sum_{j=1}^{N} \frac{\alpha_j^2}{2j+1}},
\end{equation}

Although all eigenvalues are real, the direct SWLME is not globally hyperbolic because the repeated eigenvalue $u_m$ may not have a complete eigenspace.
To identify a defective state, consider
\begin{equation*}
    \ap_1=\cdots=\ap_N=0.
\end{equation*}
At such a state, $u_m$ has algebraic multiplicity $N$ in $\tA_{11}$ and $N+1$ in $\tA_{22}$, hence algebraic multiplicity $2N+1$ in $\tA_L$.
The corresponding matrix is
\begin{equation*}
\tA_L =
\left(
\begin{array}{ccccccccccc}
0 & 1 & 0 & 0 & \cdots & 0 & 0 & 0 & \cdots & 0 & 0 \\
Gh-u_m^2 & 2u_m & 0 & 0 & \cdots & 0 & 0 & 0 & \cdots & 0 & 0 \\
0 & 0 & u_m & 0 & \cdots & 0 & 0 & 0 & \cdots & 0 & 0 \\
0 & 0 & 0 & u_m & 0 & \cdots & 0 & 0 & \cdots & 0 & 0 \\
\vdots & \vdots & \vdots & \vdots & \ddots & \vdots & \vdots & \vdots & \ddots & \vdots & 0 \\
0 & 0 & 0 & 0 & 0 & u_m & 0 & 0 & \cdots & 0 & 0 \\
-u_m v_m & v_m & \frac{\beta_1}{3} & \frac{\beta_2}{5} & \cdots & \frac{\beta_N}{2N+1} & u_m & 0 & 0 & \cdots & 0 \\
-u_m\beta_1 & \beta_1 & 0 & 0 & \cdots & 0 & 0 & u_m & 0 & \cdots & 0 \\
-u_m\beta_2 & \beta_2 & 0 & 0 & \cdots & 0 & 0 & 0 & u_m & \cdots & 0 \\
\vdots & \vdots & \vdots & \vdots & \ddots & \vdots & \vdots & \vdots & \ddots & \vdots & 0 \\
-u_m\beta_N & \beta_N & 0 & 0 & \cdots & 0 & 0 & 0 & 0 & \cdots & u_m
\end{array}
\right).
\end{equation*}
We determine the eigenspace associated with $u_m$ by solving
\begin{equation*}
    (\tA_L-u_m I)X=0,
\end{equation*}
where
\begin{equation*}
    X=(x_0,p_0,p_1,\ldots,p_N,q_0,q_1,\ldots,q_N)^T.
\end{equation*}
The first two equations are
\begin{equation*}
\begin{aligned}
    -u_mx_0+p_0&=0, \\
    (Gh-u_m^2)x_0+u_mp_0&=0.
\end{aligned}
\end{equation*}
Since $G>0$ and $h>0$, they imply $x_0=p_0=0$.
The next $N$ equations and the final $N$ transverse-moment equations are then identities.
The transverse mean-momentum equation reduces to the single constraint
\begin{equation*}
    \sum_{j=1}^{N}\frac{\beta_j}{2j+1}p_j=0.
\end{equation*}
If $(\beta_1,\ldots,\beta_N)\neq0$, this constraint leaves $N-1$ independent components among $p_1,\ldots,p_N$, while $q_0,q_1,\ldots,q_N$ are all free.
Consequently, the eigenspace has dimension $(N-1)+(N+1)=2N<2N+1$.
If $\beta_1=\cdots=\beta_N=0$, the constraint vanishes and this particular state is not defective.

\begin{theorem}[failure of global hyperbolicity for the direct SWLME]
\label{thm:direct-SWLME-not-global}
Let $N\geq1$ and $G>0$.
At every admissible state satisfying
\begin{equation*}
    \alpha_1=\cdots=\alpha_N=0,
    \qquad
    (\beta_1,\ldots,\beta_N)\neq0,
\end{equation*}
the eigenvalue $u_m$ of the $x$-direction coefficient matrix has algebraic multiplicity $2N+1$ and geometric multiplicity $2N$.
Therefore, the direct two-dimensional SWLME is not globally hyperbolic.
\end{theorem}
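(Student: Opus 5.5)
The plan is to work directly with the reordered $x$-direction matrix $\tA_L(\tU)$ in \eqref{eq:AL-block}, evaluated at a state with $\alpha_1=\cdots=\alpha_N=0$ and $(\beta_1,\ldots,\beta_N)\neq0$. The argument has two parts: first the algebraic multiplicity of $u_m$, then its geometric multiplicity, after which we transfer the conclusion to an arbitrary direction.

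\emph{Algebraic multiplicity.} Because $\tA_L$ is block lower triangular, its characteristic polynomial is the product of those of $\tA_{11}$ and $\tA_{22}$. Setting all $\alpha_j=0$ in the factorizations preceding \eqref{eig:a11} and \eqref{eig:a22} gives
\begin{equation*}
\det(\tA_L-\lambda I)=(u_m-\lambda)^{N}\bigl((\lambda-u_m)^2-Gh\bigr)(u_m-\lambda)^{N+1}.
\end{equation*}
Since $G>0$ and $h>0$, the roots $u_m\pm\sqrt{Gh}$ are distinct from $u_m$. Hence the algebraic multiplicity of $u_m$ is exactly $2N+1$.

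\emph{Geometric multiplicity.} We compute $\dim\ker(\tA_L-u_mI)$ from the explicit matrix displayed above. For a vector $X=(x_0,p_0,p_1,\ldots,p_N,q_0,\ldots,q_N)^T$, the first two rows form a $2\times2$ system with determinant $u_m^2-(Gh-u_m^2)\cdot 1\cdot(-1)\ldots$; more directly, substituting $p_0=u_mx_0$ into the second row gives $Gh\,x_0=0$, so $x_0=p_0=0$. Rows $3$ through $N+2$ then vanish identically. Once $x_0=p_0=0$, the rows $\beta_j$ for $j\ge1$ in the transverse block reduce to $0=0$. The transverse mean-momentum row reduces to the single linear constraint $\sum_j \beta_j p_j/(2j+1)=0$, whose coefficient vector is nonzero by assumption. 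The kernel is therefore parametrized by a hyperplane in $(p_1,\ldots,p_N)$, of dimension $N-1$, together with free $(q_0,\ldots,q_N)$, of dimension $N+1$. Its dimension is $2N<2N+1$.

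\emph{Conclusion.} A matrix is diagonalizable only if each geometric multiplicity equals the corresponding algebraic multiplicity, so $\tA_L$ fails to be diagonalizable at these states. Reordering is a permutation similarity, so $A_L$ is also not diagonalizable. Such states lie in $\mathcal{A}_N$ (take any $h>0$), so Definition~\ref{def:global-hyperbolicity} fails already for $n=(1,0)$. By Theorem~\ref{thm:rot-SWLME}, the same defect occurs in every direction $n$ at the state $T(\theta)^{-1}U$, though this is not needed for the claim.

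The only delicate step is the rank count. We must verify that the constraint row is genuinely independent, which holds because $\beta\neq0$. We must also confirm that the lower-left entries $-u_m\beta_j$ and $\beta_j$ impose nothing once $x_0=p_0=0$. Both checks are immediate.
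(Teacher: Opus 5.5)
Your proposal is correct and follows essentially the same route as the paper: the block-triangular factorization at $\alpha_1=\cdots=\alpha_N=0$ gives algebraic multiplicity $2N+1$ for $u_m$, and the kernel computation ($x_0=p_0=0$ from $Gh>0$, the single constraint $\sum_j\beta_jp_j/(2j+1)=0$ with nonzero coefficients, and $q_0,\ldots,q_N$ free) gives geometric multiplicity $2N$. The only blemish is the garbled, abandoned determinant fragment at the start of the geometric-multiplicity step. Delete it, since your direct substitution $p_0=u_mx_0\Rightarrow Gh\,x_0=0$ already settles $x_0=p_0=0$.
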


\begin{proof}
Equations~\eqref{eig:a11}--\eqref{eig:a22} give algebraic multiplicity $2N+1$ for $u_m$ when $\alpha_1=\cdots=\alpha_N=0$.
The eigenspace calculation above gives geometric multiplicity $2N$ whenever $(\beta_1,\ldots,\beta_N)\neq0$.
Thus, $\tA_L$ is defective at these admissible states.
\end{proof}
The globally hyperbolic construction below removes the offending lower-left coupling and modifies the transverse block in a rotationally consistent way.

\subsection{A globally hyperbolic modification of the SWLME}
\label{sec:sub:newmodel}

The defective states identified in Theorem~\ref{thm:direct-SWLME-not-global} arise from the coupling between the longitudinal moment variables and the transverse mean-momentum equation.
We therefore construct the hyperbolic modification directly from the reordered matrix $\tA_L$ in \eqref{eq:AL-block}.
Let
\begin{equation*}
    \boldsymbol{\alpha}=(\alpha_1,\ldots,\alpha_N)^T,
    \qquad
    \boldsymbol{\beta}=(\beta_1,\ldots,\beta_N)^T,
    \qquad
    D=\diag(3,5,\ldots,2N+1),
\end{equation*}
and define
\begin{equation*}
    \mathcal{S}_{\alpha}
    =\boldsymbol{\alpha}^TD^{-1}\boldsymbol{\alpha},
    \qquad
    \mathcal{S}_{\alpha\beta}
    =\boldsymbol{\alpha}^TD^{-1}\boldsymbol{\beta}.
\end{equation*}
In the ordering $\tU=(h,hu_m,h\boldsymbol{\alpha}^T,hv_m,h\boldsymbol{\beta}^T)^T$, introduce the correction
\begin{equation}\label{eq:delta-tilde-A}
\Delta\tA=
\begin{pmatrix}
    0&0&0_{1\times N}&0&0_{1\times N}\\
    0&0&0_{1\times N}&0&0_{1\times N}\\
    0_{N\times1}&0_{N\times1}&0_{N\times N}&0_{N\times1}&0_{N\times N}\\
    0&0&-\boldsymbol{\beta}^TD^{-1}&0&\boldsymbol{\alpha}^TD^{-1}\\
    0_{N\times1}&0_{N\times1}&0_{N\times N}&0_{N\times1}&0_{N\times N}
\end{pmatrix}.
\end{equation}
Thus, only the transverse mean-momentum row is changed.
For each $j=1,\ldots,N$, the entries in the $(h\alpha_j,h\beta_j)$ columns are replaced according to
\begin{equation*}
    \left(\fl{\beta_j}{2j+1},\fl{\alpha_j}{2j+1}\right)
    \longmapsto
    \left(0,\fl{2\alpha_j}{2j+1}\right).
\end{equation*}
The first change removes the defective coupling at $\boldsymbol{\alpha}=0$; the second enforces rotational consistency.
We define
\begin{equation}\label{eq:tilde-AG-definition}
    \tA_G=\tA_L+\Delta\tA
    =\begin{pmatrix}
        \tA_{11}&0\\
        \tA_{21}^{G}&\tA_{22}^{G}
    \end{pmatrix},
\end{equation}
where $\tA_{11}$ is unchanged and
\begin{equation}\label{eq:tilde-AG-blocks}
    \tA_{21}^{G}
    =\begin{pmatrix}
        -u_mv_m-\mathcal{S}_{\alpha\beta}&v_m&0_{1\times N}\\
        -(u_m\boldsymbol{\beta}+v_m\boldsymbol{\alpha})&\boldsymbol{\beta}&0_{N\times N}
    \end{pmatrix},
    \qquad
    \tA_{22}^{G}
    =\begin{pmatrix}
        u_m&2\boldsymbol{\alpha}^TD^{-1}\\
        \boldsymbol{\alpha}&u_mI_N
    \end{pmatrix}.
\end{equation}

Because $\tA_G$ is block lower triangular, its eigenvalues are the union of those of $\tA_{11}$ and $\tA_{22}^{G}$.
The longitudinal block $\tA_{11}$ has the eigenvalues in \eqref{eig:a11}.
The characteristic polynomial of the modified transverse block is
\begin{equation}\label{eq:char-tilde-A22G}
    \det(\tA_{22}^{G}-\lambda I)
    =(u_m-\lambda)^{N-1}\left((u_m-\lambda)^2-2\mathcal{S}_{\alpha}\right).
\end{equation}
Hence, if $\mathcal{S}_{\alpha}>0$, the transverse eigenvalues are $u_m$ with multiplicity $N-1$ and $u_m\pm\sqrt{2\mathcal{S}_{\alpha}}$.
If $\mathcal{S}_{\alpha}=0$, then $\boldsymbol{\alpha}=0$ and all $N+1$ transverse eigenvalues equal $u_m$.

It remains to verify that the repeated eigenvalue $u_m$ has a complete eigenspace.
Write an eigenvector in the reordered variables as
\begin{equation*}
    X=(x_0,p_0,\boldsymbol{p}^T,q_0,\boldsymbol{q}^T)^T,
    \qquad
    \boldsymbol{p},\boldsymbol{q}\in\mathbb{R}^N.
\end{equation*}
First suppose that $\mathcal{S}_{\alpha}=0$.
Then $\boldsymbol{\alpha}=0$, and the first two equations of $(\tA_G-u_mI)X=0$ imply $x_0=p_0=0$ because $Gh>0$.
All components of $\boldsymbol{p}$, $q_0$, and $\boldsymbol{q}$ are free, so the eigenspace has dimension $2N+1$, equal to the algebraic multiplicity of $u_m$.

Now suppose that $\mathcal{S}_{\alpha}>0$.
The equation $(\tA_G-u_mI)X=0$ reduces to the four independent scalar relations
\begin{equation}\label{eq:tilde-AG-um-eigenspace}
\begin{aligned}
    p_0-u_mx_0&=0,\\
    q_0-v_mx_0&=0,\\
    (Gh-\mathcal{S}_{\alpha})x_0
    +2\boldsymbol{\alpha}^TD^{-1}\boldsymbol{p}&=0,\\
    -\mathcal{S}_{\alpha\beta}x_0
    +2\boldsymbol{\alpha}^TD^{-1}\boldsymbol{q}&=0.
\end{aligned}
\end{equation}
These relations are independent because $\boldsymbol{\alpha}\neq0$.
They leave an eigenspace of dimension $(2N+3)-4=2N-1$, equal to the algebraic multiplicity of $u_m$ obtained from \eqref{eig:a11} and \eqref{eq:char-tilde-A22G}.
Moreover, the four remaining eigenvalues
\begin{equation*}
    u_m\pm\sqrt{Gh+3\mathcal{S}_{\alpha}},
    \qquad
    u_m\pm\sqrt{2\mathcal{S}_{\alpha}},
\end{equation*}
are real and mutually distinct because $Gh+\mathcal{S}_{\alpha}>0$.
Consequently, $\tA_G$ is real diagonalizable for every $h>0$ and $G>0$.

We return to the original vector-pair ordering to construct the rotational companion of the correction.
Let $\Pi$ be the permutation matrix such that $\tU=\Pi U$.
The $x$-direction matrix in the original ordering is
\begin{equation}\label{def:AG}
    A_G(U)
    =\Pi^{-1}\tA_G(\Pi U)\Pi
    =A_L(U)+\Delta A(U).
\end{equation}
Index the horizontal vector pairs by $0,1,\ldots,N$, where pair $0$ is $(hu_m,hv_m)^T$ and pair $j$ is $(h\alpha_j,h\beta_j)^T$.
The only nonzero $2\times2$ vector-pair blocks of $\Delta A$ are
\begin{equation}\label{eq:delta-A-blocks}
    (\Delta A)_{0j}
    =\fl{1}{2j+1}
    \begin{pmatrix}
        0&0\\
        -\beta_j&\alpha_j
    \end{pmatrix},
    \qquad j=1,\ldots,N.
\end{equation}
All blocks involving the scalar variable $h$ and all other vector-pair blocks vanish.

For $V_j=(\alpha_j,\beta_j)^T$, the block in \eqref{eq:delta-A-blocks} has the form in Theorem~\ref{thm:rot-2} with
\begin{equation*}
    c_1=-\fl{1}{2j+1},
    \qquad
    c_3=\fl{1}{2j+1},
    \qquad
    c_2=c_4=c_5=c_6=0.
\end{equation*}
The corresponding rotational companion is therefore
\begin{equation}\label{eq:delta-B-blocks}
    (\Delta B)_{0j}
    =\fl{1}{2j+1}
    \begin{pmatrix}
        \beta_j&-\alpha_j\\
        0&0
    \end{pmatrix},
    \qquad j=1,\ldots,N,
\end{equation}
with all other blocks equal to zero.
We define
\begin{equation}\label{def:BG}
    B_G(U)=B_L(U)+\Delta B(U).
\end{equation}
The resulting blocks in the mean-momentum rows and moment columns are
\begin{equation*}
    (A_G)_{0j}=\fl{2\alpha_j}{2j+1}I_2,
    \qquad
    (B_G)_{0j}=\fl{2\beta_j}{2j+1}I_2.
\end{equation*}
The block pairs \eqref{eq:delta-A-blocks}--\eqref{eq:delta-B-blocks} satisfy the rotational-consistency relation \eqref{eq:rot-block-definition}.
Therefore, Theorem~\ref{thm:block-closure-rot} gives
\begin{equation}\label{eq:rot-AG-BG-revised}
    \cos\theta\,A_G(U)+\sin\theta\,B_G(U)
    =T(\theta)^{-1}A_G(T(\theta)U)T(\theta).
\end{equation}

\begin{theorem}[global hyperbolicity of the $G\text{-}\mathrm{SWLME}$]
\label{thm:global-hyperbolicity-AG}
Let $N\geq1$ and $G>0$.
For each $U\in\mathcal{A}_N$, define $A_G(U)$ and $B_G(U)$ by \eqref{def:AG} and \eqref{def:BG}, with the correction blocks \eqref{eq:delta-A-blocks}--\eqref{eq:delta-B-blocks}.
Then, for every unit direction $n=(\cos\theta,\sin\theta)^T$, the directional coefficient matrix
\begin{equation*}
    A_n(U)=\cos\theta\,A_G(U)+\sin\theta\,B_G(U)
\end{equation*}
is real diagonalizable.
Consequently, the two-dimensional $G\text{-}\mathrm{SWLME}$ is globally hyperbolic.
\end{theorem}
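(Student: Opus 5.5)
The plan is to reduce the statement to the $x$-direction analysis already carried out for $\tA_G$, and then transport the result back to an arbitrary direction by similarity.

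\textbf{Step 1: rotational invariance.} The correction blocks \eqref{eq:delta-A-blocks}--\eqref{eq:delta-B-blocks} have the form of Theorem~\ref{thm:rot-2} with $c_1=-1/(2j+1)$, $c_3=1/(2j+1)$. I will check this directly: the $c_1$ block pair plus the $c_3$ block pair give $A_b=\frac{1}{2j+1}\begin{pmatrix}0&0\\-\beta_j&\alpha_j\end{pmatrix}$ and $B_b=\frac{1}{2j+1}\begin{pmatrix}\beta_j&-\alpha_j\\0&0\end{pmatrix}$. The converse direction of Theorem~\ref{thm:rot-2} then gives \eqref{eq:rot-block-definition} for each pair.

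These blocks vanish in the row and column of $h$. Hence Theorem~\ref{thm:block-closure-rot}, together with Theorem~\ref{thm:rot-SWLME}, yields \eqref{eq:rot-AG-BG-revised}. Therefore $A_n(U)=T(\theta)^{-1}A_G(T(\theta)U)T(\theta)$, and since $T(\theta)$ is real orthogonal, $A_n(U)$ is real diagonalizable if and only if $A_G(T(\theta)U)$ is.

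\textbf{Step 2: preservation of the state space.} $T(\theta)$ leaves $h$ unchanged, so $T(\theta)U\in\mathcal{A}_N$ whenever $U\in\mathcal{A}_N$. It therefore suffices to prove that $A_G(W)$ is real diagonalizable for every $W\in\mathcal{A}_N$.

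\textbf{Step 3: the $x$-direction matrix.} By \eqref{def:AG}, $A_G(W)=\Pi^{-1}\tA_G(\Pi W)\Pi$ with $\Pi$ a permutation, so it suffices to treat $\tA_G$. I will reuse the block lower-triangular structure \eqref{eq:tilde-AG-definition}. The eigenvalues are those of $\tA_{11}$ from \eqref{eig:a11} together with those of $\tA^G_{22}$ from \eqref{eq:char-tilde-A22G}. Diagonalizability is equivalent to geometric multiplicity equalling algebraic multiplicity for each eigenvalue, and I split into two cases.

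\emph{Case $\boldsymbol{\alpha}=0$.} The non-$u_m$ eigenvalues are $u_m\pm\sqrt{Gh}$, which are simple and distinct from $u_m$ since $Gh>0$. For $u_m$, the eigenspace computation gives $x_0=p_0=0$ with the remaining $2N+1$ components free. This works because the transverse mean-momentum row of $\tA_G-u_mI$ has $\boldsymbol{\alpha}$-dependent entries only in the $\boldsymbol{q}$ and $x_0,p_0$ slots, and the $\boldsymbol{p}$-columns of $\tA^G_{21}$ are zero.

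\emph{Case $\boldsymbol{\alpha}\neq0$.} Here $\mathcal{S}_\alpha>0$. I will verify the reduction to the four relations \eqref{eq:tilde-AG-um-eigenspace}, using $\beta$-rows of the form $-(u_m\beta_j+v_m\alpha_j)x_0+\beta_jp_0+\alpha_jq_0=0$. Substituting $p_0=u_mx_0$ gives $\alpha_j(q_0-v_mx_0)=0$, which is the single relation $q_0=v_mx_0$ because $\boldsymbol{\alpha}\neq0$. The $\alpha$-rows become $\alpha_j(-2u_mx_0+2p_0)-u_m\cdot0$, consistent with the first relation. These four relations are independent: they involve $p_0$, $q_0$, and the distinct directions $\boldsymbol{p}$ and $\boldsymbol{q}$ via $\boldsymbol{\alpha}^TD^{-1}\neq0$. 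This leaves dimension $2N-1$, matching the algebraic multiplicity $N+(N-1)$.

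The four other eigenvalues are simple. The values $\sqrt{Gh+3\mathcal{S}_\alpha}$ and $\sqrt{2\mathcal{S}_\alpha}$ are positive and unequal since $Gh+\mathcal{S}_\alpha>0$, so simple eigenvalues automatically have complete eigenspaces.

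\textbf{Main obstacle.} The delicate step is the multiplicity bookkeeping in Case $\boldsymbol{\alpha}\neq0$. I must confirm that no further equations beyond the four listed survive, which requires carefully writing out the rows of $\tA_G-u_mI$. I must also confirm that $u_m$ does not coincide with any other eigenvalue, which holds since $\mathcal{S}_\alpha>0$ and $Gh>0$.

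Combining Steps 1–3 shows that $A_n(U)$ is real diagonalizable for all $U\in\mathcal{A}_N$ and all $n\in\mathbb{S}^1$. This is Definition~\ref{def:global-hyperbolicity}.
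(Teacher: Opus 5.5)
Your proposal is correct and follows the paper's proof. Both arguments use the permutation $\Pi$ to pass to $\tA_G$ and split into the cases $\mathcal{S}_\alpha=0$ and $\mathcal{S}_\alpha>0$, matching the geometric and algebraic multiplicities of $u_m$ (with the remaining eigenvalues simple), then transfer the result to every direction through the rotational-invariance identity \eqref{eq:rot-AG-BG-revised}. Your explicit checks of the $c_1,c_3$ block identification and of $T(\theta)U\in\mathcal{A}_N$ are details the paper leaves implicit.
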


\begin{proof}
The permutation relation \eqref{def:AG} shows that $A_G(U)$ is similar to $\tA_G(\Pi U)$.
If $\mathcal{S}_{\alpha}=0$, the eigenvalue $u_m$ has a $2N+1$-dimensional eigenspace, and the two remaining eigenvalues $u_m\pm\sqrt{Gh}$ are real and simple.
If $\mathcal{S}_{\alpha}>0$, the eigenvalue $u_m$ has a $2N-1$-dimensional eigenspace, and the four remaining eigenvalues are real, simple, and mutually distinct.
Thus, $A_G(U)$ is real diagonalizable for every admissible state.
Finally, \eqref{eq:rot-AG-BG-revised} shows that $A_n(U)$ is similar to $A_G(T(\theta)U)$, which is real diagonalizable by the preceding argument.
\end{proof}
Thus, Theorem~\ref{thm:global-hyperbolicity-AG} establishes global hyperbolicity precisely in the pointwise sense of Definition~\ref{def:global-hyperbolicity}, including at states with $\boldsymbol{\alpha}=0$, where the repeated eigenvalue $u_m$ has a complete eigenspace.

Table~\ref{tab:model-summary-short} summarizes the model hierarchy and the stationary classes considered in well-balanced schemes.

\begin{table}[H]
\centering
{
\setlength{\tabcolsep}{3pt}
\renewcommand{\arraystretch}{1.25}
\begin{tabular}{@{}C{0.10\textwidth} C{0.20\textwidth} C{0.14\textwidth} C{0.25\textwidth} C{0.25\textwidth}@{}}
\toprule
Dimension & Model & Rotational invariance & Hyperbolicity of the principal part & Stationary class used in well-balanced work \\
\midrule
\multirow{3}{*}{1D}
& SWME \cite{Kowalski2019} & N.A. & globally hyperbolic only for $N=0,1$ & not used here \\
\cmidrule(lr){2-5}
& HSWME \cite{Koellermeier2020} & N.A. & globally hyperbolic & lake at rest \\
\cmidrule(lr){2-5}
& SWLME \cite{Koellermeier2022,Pimentel-Garcia2024} & N.A. & globally hyperbolic & moving states: invariants without friction, steady ODE with friction \\
\midrule
\multirow{3}{*}{2D}
& HSWME and globally hyperbolic variants \cite{Bauerle2025} & yes & direct extension is not globally hyperbolic; modified variants are globally hyperbolic & lake at rest \\
\cmidrule(lr){2-5}
& SWLME (direct, this work) & yes & not globally hyperbolic by Theorem~\ref{thm:direct-SWLME-not-global} & not developed \\
\cmidrule(lr){2-5}
& $G\text{-}\mathrm{SWLME}$ (this work) & yes & globally hyperbolic by Theorem~\ref{thm:global-hyperbolicity-AG} & quasi-two-dimensional frictional moving states from a steady ODE \\
\bottomrule
\end{tabular}
}
\caption{Model hierarchy relevant to the present analysis.
Here N.A. means that planar rotational invariance is not applicable to a one-dimensional system.}
\label{tab:model-summary-short}
\end{table}

\section{Well-balanced schemes for quasi-two-dimensional flows}
\label{sec:well-balanced}

We construct first- and second-order finite-volume methods for the quasi-two-dimensional $G\text{-}\mathrm{SWLME}$ that exactly preserve one prescribed discrete moving equilibrium.
We construct and store the equilibrium from boundary data, then reconstruct deviations from it.
The construction combines the path-conservative framework of \cite{Pares2006b,Castro2017}, reconstruction of deviations from a known equilibrium \cite{Klingenberg2019}, midpoint collocation \cite{Gomez-Bueno2021a}, and the frictional SWLME stationary formulation of \cite{Pimentel-Garcia2024}.

The full two-dimensional system can be written as
\begin{equation}\label{eq:general-2d}
    U_t + A_{G}(U) \px U + B_{G}(U)\py U + S_{x}(U)\px h_b + S_{y}(U)\py h_b + R(U) = 0,
\end{equation}
where
\begin{align*}
    S_{x}(U) &= \left(0, Gh, 0, \cdots, 0\right)^{T}, \quad
    S_{y}(U) = \left(0,0, Gh, 0, \cdots, 0\right)^{T}, \\
    R(U) &= \left(0, R^{u}(U), R^{v}(U), R^{\ap}_{1}(U), R^{\beta}_{1}(U),\cdots, R^{\ap}_{N}(U), R^{\beta}_{N}(U)\right)^{T},
\end{align*}
Using the bottom velocities defined in~\eqref{eq:reconstructed-bottom-velocities}, the mean-momentum components of the standard projected Navier-slip source are
\begin{align*}
    R^{u}(U) = \frac{\gamma}{\varepsilon}u_b^{(N)}, \qquad
    R^{v}(U) = \frac{\gamma}{\varepsilon}v_b^{(N)}.
\end{align*}
For $i=1,\ldots,N$, the moment-friction and viscous-relaxation components are
\begin{align*}
    R^{\ap}_{i} =  (2i+1)\left(\frac{\gamma}{\varepsilon}u_b^{(N)}+\fl{\iRe}{\varepsilon h}\sum_{j=1}^{N}C_{ij}\ap_j\right), \quad
    R^{\beta}_{i} = (2i+1)\left(\frac{\gamma}{\varepsilon}v_b^{(N)}+\fl{\iRe}{\varepsilon h}\sum_{j=1}^{N}C_{ij}\beta_j\right).
\end{align*}

Constructing general two-dimensional moving equilibria requires solving a coupled system of stationary PDEs.
We therefore focus on quasi-two-dimensional flows, for which the stationary equations reduce to an ODE system.
Specifically, we impose
\begin{equation*}
    \py(\cdot) = 0, \quad \py h_b=0,
\end{equation*}
under which \eqref{eq:general-2d} becomes
\begin{equation}\label{eq:quasi-2d}
    U_t + A_{G}(U) \px U + S_{x}(U)\px h_b + R(U) = 0.
\end{equation}

We assume that the bottom is continuous across cell interfaces.
The HLL fluctuations approximate the path integral of $A_G(U)\,\md U$, while the topography and friction contributions enter the equilibrium-subtracted volume term.

\subsection{Construction of a prescribed discrete equilibrium}
\label{subsec:stationary-wb}

A stationary solution of \eqref{eq:quasi-2d} satisfies
\begin{equation*}
    A_G(U)U_x+S_x(U)\partial_xh_b+R(U)=0.
\end{equation*}
When $A_G(U)$ is invertible, the stationary equation becomes
\begin{equation*}
    U_x=-A_G(U)^{-1}\left(S_x(U)\partial_xh_b+R(U)\right).
\end{equation*}
The eigenvalues in Section~\ref{sec:hyperbolicity} show that $A_G(U)$ is singular if and only if
\begin{equation*}
    u_m=0,
    \quad\text{or}\quad
    |u_m|=\sqrt{Gh+3\sum_{j=1}^{N}\frac{\alpha_j^2}{2j+1}},
    \quad\text{or}\quad
    |u_m|=\sqrt{2\sum_{j=1}^{N}\frac{\alpha_j^2}{2j+1}}.
\end{equation*}
We assume that the prescribed moving branch avoids these singular states.
Lake-at-rest profiles are prescribed analytically.

Let $I_i=[x_{i-\frac12},x_{i+\frac12}]$ be a uniform cell of width $\Delta x$, and prescribe the left equilibrium interface value $U_{\frac12}^*$.
The discrete equilibrium is propagated from left to right by the midpoint-collocation relations
\begin{equation}\label{eq:stored-equilibrium}
\begin{aligned}
    K_i&=-A_G(U_i^*)^{-1}\left(S_x(U_i^*)\partial_xh_b(x_i)+R(U_i^*)\right),\\
    U_i^*-\frac{\Delta x}{2}K_i&=U_{i-\frac12}^*,\\
    U_{i+\frac12}^*&=U_i^*+\frac{\Delta x}{2}K_i.
\end{aligned}
\end{equation}
The nonlinear equation for $U_i^*$ is solved by damped Newton iteration, and $U_{i+\frac12}^*$ is then used as the left state in the next cell.
The stored cellwise profile is
\begin{equation*}
    U_i^*(x)=U_i^*+(x-x_i)K_i.
\end{equation*}
Adjacent profiles share the stored interface state $U_{i+\frac12}^*$, and $U_i^*$ is exactly the cell average of the stored linear profile:
\begin{equation}\label{eq:stored-equilibrium-average}
    \frac{1}{\Delta x}\int_{I_i}U_i^*(x)\,\md x=U_i^*.
\end{equation}
For a smooth continuous equilibrium $U^*(x)$, the cell average instead satisfies
\begin{equation}\label{eq:continuous-equilibrium-average}
    \overline{U}_i^*
    :=\frac{1}{\Delta x}\int_{I_i}U^*(x)\,\md x
    =U^*(x_i)+\frac{\Delta x^2}{24}\partial_{xx}U^*(x_i)+\mathcal O(\Delta x^4).
\end{equation}
Thus, if $U_i^*=U^*(x_i)+\mathcal O(\Delta x^2)$, then $U_i^*=\overline{U}_i^*+\mathcal O(\Delta x^2)$.

\subsection{Equilibrium-deviation reconstruction and HLL fluctuations}
\label{subsec:first-order-wb}

Let $U_i(t)$ denote the cell average
\begin{equation}\label{eq:finite-volume-cell-average}
    U_i(t)=\frac{1}{\Delta x}\int_{I_i}U(x,t)\,\md x,
\end{equation}
and define the deviation from the stored equilibrium average by
\begin{equation*}
    V_i=U_i-U_i^*.
\end{equation*}
The first-order method uses $\sigma_i=0$, whereas the second-order method uses the componentwise limited slope
\begin{equation}\label{eq:deviation-slope}
    \sigma_i=\operatorname{minmod}\left(
    \frac{V_i-V_{i-1}}{\Delta x},
    \frac{V_{i+1}-V_i}{\Delta x}
    \right),
\end{equation}
where
\begin{equation*}
    \operatorname{minmod}(a,b)=
    \begin{cases}
        \operatorname{sign}(a)\min(|a|,|b|), & ab>0,\\
        0, & ab\leq0.
    \end{cases}
\end{equation*}
The boundary extensions are $V_0=V_{N_x+1}=0$.
Both spatial orders use the equilibrium-deviation reconstruction
\begin{equation*}
    P_i(x)=U_i^*(x)+V_i+(x-x_i)\sigma_i.
\end{equation*}
This linear profile has cell average $U_i$, midpoint value $P_i(x_i)=U_i$, and slope $\partial_xP_i=K_i+\sigma_i$.
The traces at an interior interface are therefore
\begin{equation}\label{eq:wb-interface-traces}
\begin{aligned}
    U_{i+\frac12}^-&=U_{i+\frac12}^*+V_i+\frac{\Delta x}{2}\sigma_i,\\
    U_{i+\frac12}^+&=U_{i+\frac12}^*+V_{i+1}-\frac{\Delta x}{2}\sigma_{i+1}.
\end{aligned}
\end{equation}

For two traces $U_L$ and $U_R$, define the path jump
\begin{equation}\label{eq:path-jump-hll}
    \mathcal Q(U_L,U_R)=\int_0^1 A_G\big(U_L+s(U_R-U_L)\big)(U_R-U_L)\,\md s.
\end{equation}
The path integral in \eqref{eq:path-jump-hll} is evaluated by four-point Gauss--Legendre quadrature.
If the state derivatives of $A_G$ through order eight are uniformly bounded along the path and $\|U_R-U_L\|=\mathcal O(\Delta x)$, the quadrature error is $\mathcal O(\|U_R-U_L\|^9)=\mathcal O(\Delta x^9)$ and therefore does not limit second-order consistency.
Let
\begin{equation*}
\begin{aligned}
    c(U)&=\sqrt{Gh+3\sum_{j=1}^{N}\frac{\alpha_j^2}{2j+1}},\quad
    \lambda_{\min}(U)&=u_m-c(U),\quad
    \lambda_{\max}(U)&=u_m+c(U).
\end{aligned}
\end{equation*}
These are the extremal eigenvalues of $A_G(U)$ because the transverse and repeated characteristic speeds lie between them.
The HLL speed estimates are
\begin{equation*}
    s_L=\min\big(0,\lambda_{\min}(U_L),\lambda_{\min}(U_R)\big),
    \qquad
    s_R=\max\big(0,\lambda_{\max}(U_L),\lambda_{\max}(U_R)\big).
\end{equation*}
\begin{lemma}[pathwise HLL bounds]\label{lem:pathwise-hll-bounds}
Let $U_L,U_R\in\mathcal{A}_N$ have depths $h_L,h_R$, and let $\Phi(s)=U_L+s(U_R-U_L)$, $s\in[0,1]$, be the straight path in conservative variables.
Then the following bounds hold for every $s\in[0,1]$:
\begin{equation}\label{eq:pathwise-extremal-bounds}
\begin{aligned}
    \lambda_{\min}(\Phi(s))
    &\geq\min\big(\lambda_{\min}(U_L),\lambda_{\min}(U_R)\big),\\
    \lambda_{\max}(\Phi(s))
    &\leq\max\big(\lambda_{\max}(U_L),\lambda_{\max}(U_R)\big)
\end{aligned}
\end{equation}
Consequently, $s_L$ and $s_R$ bound every characteristic speed of $A_G(\Phi(s))$ along the path.
\end{lemma}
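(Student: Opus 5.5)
The plan is to write the extremal speeds in conservative variables and use a convexity argument along the straight path. Set $q=hu_m$ and $a_j=h\alpha_j$, so that $u_m=q/h$. Then
\begin{equation*}
    h\,\lambda_{\max}(U)=q+\Big(Gh^3+\sum_{j=1}^{N}\frac{3a_j^2}{2j+1}\Big)^{1/2}=:g_+(U),
    \qquad
    h\,\lambda_{\min}(U)=q-\Big(Gh^3+\sum_{j=1}^{N}\frac{3a_j^2}{2j+1}\Big)^{1/2}=:g_-(U).
\end{equation*}
Along $\Phi(s)$ every conservative component, in particular $h(s)$, $q(s)$ and $a_j(s)$, is affine in $s$. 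Since $h_L,h_R>0$, we also have $h(s)>0$ on $[0,1]$. The whole proof then reduces to showing that $g_+$ is convex and $g_-$ is concave on $\mathcal{A}_N$, and then passing from these facts to the ratios $g_\pm/h$.

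For convexity, write the square root as a weighted Euclidean norm of the vector
\begin{equation*}
    \Big(\sqrt{G}\,h^{3/2},\ \sqrt{3/3}\,a_1,\ \ldots,\ \sqrt{3/(2N+1)}\,a_N\Big).
\end{equation*}
The first component $h\mapsto h^{3/2}$ is convex and nonnegative on $h>0$, and the remaining components are linear. The Euclidean norm is convex, and it is nondecreasing in the absolute value of each component. A standard composition rule then applies: a convex function that is nondecreasing in a component, composed with a convex nonnegative function in that component and with affine functions in the others, is convex. Hence the square-root term is convex in $U$. It follows that $g_+$ is convex and $g_-=q-(\cdot)^{1/2}$ is concave. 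This composition step is the only real obstacle, and I would verify it directly by the triangle inequality. Write $w=(\sqrt{G}h^{3/2},\ldots)$ at a convex combination of two states. The first entry is bounded by the convex combination of the endpoint entries, and the norm is monotone in that nonnegative entry. The triangle inequality then handles the combination of the two endpoint vectors.

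Next, I would pass from $g_\pm$ to the speeds by quasiconvexity. Fix $t\in\mathbb{R}$. The set $\{s\in[0,1]:\lambda_{\max}(\Phi(s))\le t\}$ equals $\{s: g_+(\Phi(s))-t\,h(s)\le0\}$ because $h(s)>0$. Since $s\mapsto g_+(\Phi(s))-t\,h(s)$ is convex, this set is an interval. Taking $t=\max(\lambda_{\max}(U_L),\lambda_{\max}(U_R))$, both endpoints $s=0,1$ lie in the set, so it is all of $[0,1]$. This gives the second inequality in \eqref{eq:pathwise-extremal-bounds}. The first inequality follows symmetrically: $g_-(\Phi(s))-t\,h(s)$ is concave, so the superlevel set $\{\lambda_{\min}(\Phi(s))\ge t\}$ is an interval containing both endpoints.

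Finally, for the consequence I would use the spectrum computed in Section~\ref{sec:sub:newmodel}. The eigenvalues of $A_G(U)$ are $u_m$, $u_m\pm\sqrt{2\mathcal{S}_\alpha}$ and $u_m\pm c(U)$, and since $2\mathcal{S}_\alpha\le Gh+3\mathcal{S}_\alpha$, all of them lie in $[\lambda_{\min}(U),\lambda_{\max}(U)]$. Combined with \eqref{eq:pathwise-extremal-bounds} and the definitions of $s_L$ and $s_R$, which additionally include $0$, every characteristic speed of $A_G(\Phi(s))$ lies in $[s_L,s_R]$.
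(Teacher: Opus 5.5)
Your proof is correct and takes essentially the same route as the paper. Like the paper, you write $h\lambda_{\max/\min}=q\pm\|(\sqrt{G}\,h^{3/2},\boldsymbol z)\|_2$ with this norm convex, and use that $q+\Psi-ah$ (resp.\ $q-\Psi-bh$) is convex (concave) along the affine path and has the right sign at both endpoints before dividing by $h_s>0$. Your sublevel-set phrasing is just a repackaging of the paper's chain of convexity inequalities, and your bound $2\mathcal{S}_\alpha\le Gh+3\mathcal{S}_\alpha$ for the remaining eigenvalues makes explicit the paper's closing remark.
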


\begin{proof}
Write $q=hu_m$ and define $\boldsymbol z\in\mathR^N$ by
\begin{equation*}
    z_j=\sqrt{\frac{3}{2j+1}}\,h\alpha_j,
    \qquad j=1,\ldots,N.
\end{equation*}
Let
\begin{equation*}
    \Psi(h,\boldsymbol z)=\sqrt{Gh^3+\|\boldsymbol z\|_2^2}.
\end{equation*}
Since $h\,c(U)=\Psi(h,\boldsymbol z)$, the extremal eigenvalues satisfy
\begin{equation}\label{eq:extremal-conservative-form}
    \lambda_{\min}(U)=\frac{q-\Psi(h,\boldsymbol z)}{h},
    \qquad
    \lambda_{\max}(U)=\frac{q+\Psi(h,\boldsymbol z)}{h}.
\end{equation}
The function $\Psi$ is convex for $h>0$, since
\begin{equation*}
    \Psi(h,\boldsymbol z)
    =\left\|\big(\sqrt{G}\,h^{3/2},\boldsymbol z\big)\right\|_2,
\end{equation*}
where $h\mapsto h^{3/2}$ is convex and nonnegative, and the Euclidean norm is convex and nondecreasing in its nonnegative first argument.
Along the conservative straight path, $h_s$, $q_s$, and $\boldsymbol z_s$ are affine in $s$, and $h_s=(1-s)h_L+sh_R>0$.
Hence
\begin{equation}\label{eq:Psi-path-convexity}
    \Psi(h_s,\boldsymbol z_s)
    \leq(1-s)\Psi(h_L,\boldsymbol z_L)+s\Psi(h_R,\boldsymbol z_R).
\end{equation}
Set
\begin{equation*}
    a=\max\big(\lambda_{\max}(U_L),\lambda_{\max}(U_R)\big),
    \qquad
    b=\min\big(\lambda_{\min}(U_L),\lambda_{\min}(U_R)\big).
\end{equation*}
For $K=L,R$, \eqref{eq:extremal-conservative-form} gives
\begin{equation*}
    q_K+\Psi(h_K,\boldsymbol z_K)\leq ah_K,
    \qquad
    q_K-\Psi(h_K,\boldsymbol z_K)\geq bh_K.
\end{equation*}
Convexity and the affine dependence of $q_s$ and $h_s$ imply
\begin{align*}
    q_s+\Psi(h_s,\boldsymbol z_s)
    &\leq(1-s)\big(q_L+\Psi(h_L,\boldsymbol z_L)\big)
    +s\big(q_R+\Psi(h_R,\boldsymbol z_R)\big)
    \leq ah_s,\\
    q_s-\Psi(h_s,\boldsymbol z_s)
    &\geq(1-s)\big(q_L-\Psi(h_L,\boldsymbol z_L)\big)
    +s\big(q_R-\Psi(h_R,\boldsymbol z_R)\big)
    \geq bh_s.
\end{align*}
Division by $h_s>0$ proves \eqref{eq:pathwise-extremal-bounds}.
The remaining eigenvalues $u_m$ and $u_m\pm\sqrt{2\sum_{j=1}^N\alpha_j^2/(2j+1)}$ lie between $\lambda_{\min}$ and $\lambda_{\max}$, which completes the proof.
\end{proof}

When $s_L<0<s_R$, the path-conservative HLL state and fluctuations are
\begin{equation}\label{eq:path-conservative-hll}
\begin{aligned}
    U^{\mathrm{HLL}}&=\frac{s_RU_R-s_LU_L-\mathcal Q(U_L,U_R)}{s_R-s_L},\\
    D^-(U_L,U_R)&=s_L\big(U^{\mathrm{HLL}}-U_L\big),\quad
    D^+(U_L,U_R)&=s_R\big(U_R-U^{\mathrm{HLL}}\big).
\end{aligned}
\end{equation}
For one-sided wave configurations, the fluctuations are
\begin{equation}\label{eq:path-conservative-hll-one-sided}
    (D^-,D^+)=
    \begin{cases}
        (0,\mathcal Q), & s_L\geq0,\\
        (\mathcal Q,0), & s_R\leq0.
    \end{cases}
\end{equation}
In every case, $D^-+D^+=\mathcal Q$, which gives path consistency in the sense of \cite{Pares2006b,Castro2017}.

At the left boundary, the HLL pair is $(U_{\frac12}^*,U_{\frac12}^+)$, and at the right boundary it is $(U_{N_x+\frac12}^-,U_{N_x+\frac12}^*)$.
These prescribed exterior states are used at every time-integration stage.
We assume that all cell averages, reconstructed traces, path states, and intermediate stages remain in $\mathcal{A}_N$.
The method includes no positivity-preserving limiter or wetting--drying treatment.
\subsection{Volume correction, time integration, and preservation}
\label{subsec:second-order-wb}

For a differentiable cellwise profile $W$, define the interior differential contribution
\begin{equation}\label{eq:cell-interior-contribution}
    \mathcal G[W](x)
    =A_G(W(x))\partial_xW(x)+S_x(W(x))\partial_xh_b(x)+R(W(x)).
\end{equation}
The exact equilibrium-subtracted contribution associated with the reconstructed profile $P_i$ is
\begin{equation}\label{eq:exact-equilibrium-subtracted-volume}
    \widehat{\mathcal C}_i
    =\int_{I_i}\left(\mathcal G[P_i](x)-\mathcal G[U_i^*](x)\right)\,\md x,
\end{equation}
where $\mathcal G[U_i^*]$ acts on the stored cellwise profile $U_i^*(x)$.
The midpoint identities $P_i(x_i)=U_i$, $\partial_xP_i=K_i+\sigma_i$, $U_i^*(x_i)=U_i^*$, and $\partial_xU_i^*=K_i$ therefore give the discrete midpoint contribution
\begin{equation}\label{eq:second-order-volume-correction}
\begin{aligned}
    \mathcal C_i=\Delta x\Big[&A_G(U_i)(K_i+\sigma_i)-A_G(U_i^*)K_i\\
    &+\big(S_x(U_i)-S_x(U_i^*)\big)\partial_xh_b(x_i)
    +R(U_i)-R(U_i^*)\Big].
\end{aligned}
\end{equation}
This correction is required at both spatial orders because adding a constant deviation to the stored equilibrium profile does not generally yield a stationary profile.
Thus, even when $\sigma_i=0$ in the first-order method, $\mathcal C_i$ generally remains nonzero away from the prescribed equilibrium.
The stationary slope definition in \eqref{eq:stored-equilibrium} gives
\begin{equation*}
    \mathcal G[U_i^*](x_i)
    =A_G(U_i^*)K_i+S_x(U_i^*)\partial_xh_b(x_i)+R(U_i^*)=0.
\end{equation*}
If the reconstructed and stored profiles, $h_b$, $A_G$, $S_x$, and $R$ are sufficiently smooth with uniformly bounded derivatives in the fully wet admissible set, midpoint quadrature yields
\begin{equation*}
    \widehat{\mathcal C}_i=\mathcal C_i+\mathcal O(\Delta x^3),
    \qquad
    \int_{I_i}\mathcal G[U_i^*](x)\,\md x=\mathcal O(\Delta x^3).
\end{equation*}
It follows that
\begin{equation}\label{eq:volume-consistency}
    \mathcal C_i=\int_{I_i}\mathcal G[P_i](x)\,\md x+\mathcal O(\Delta x^3),
\end{equation}
so division by $\Delta x$ in the semi-discrete operator produces an $\mathcal O(\Delta x^2)$ consistency error from the equilibrium subtraction and midpoint volume quadrature.
In smooth regions where the limiter does not reduce the reconstruction order, the second-order interface reconstruction, the path-consistent HLL fluctuations, the four-point path quadrature, and \eqref{eq:volume-consistency} give a formal $\mathcal O(\Delta x^2)$ spatial truncation error.
SSP-RK2 has local temporal truncation error $\mathcal O(\Delta t^3)$, so under a stable step with $\Delta t=\mathcal O(\Delta x)$ the fully discrete method is formally second order for smooth solutions away from limiter extrema.
This formal consistency argument does not establish stability or positivity, and excludes limiter extrema and discontinuities.
The resulting semi-discrete operator is
\begin{equation*}
    \mathcal L_i(U)=-\frac{1}{\Delta x}\left(
    D_{i+\frac12}^{-}+D_{i-\frac12}^{+}+\mathcal C_i
    \right),
\end{equation*}
where each fluctuation is evaluated from the corresponding states in \eqref{eq:wb-interface-traces} and from the boundary pairs stated above.

The first-order method combines $\sigma_i=0$ with forward Euler,
\begin{equation}\label{eq:first-order-wb-euler}
    U_i^{n+1}=U_i^n+\Delta t\,\mathcal L_i(U^n).
\end{equation}
The second-order method combines the limited slope \eqref{eq:deviation-slope} with SSP-RK2,
\begin{equation}\label{eq:second-order-wb-ssprk2}
\begin{aligned}
    U_i^{(1)}&=U_i^n+\Delta t\,\mathcal L_i(U^n),\\
    U_i^{n+1}&=\frac12U_i^n+\frac12\left(
    U_i^{(1)}+\Delta t\,\mathcal L_i(U^{(1)})
    \right).
\end{aligned}
\end{equation}
In the numerical experiments, the hyperbolic time step is computed from characteristic speeds evaluated at the current cell averages:
\begin{equation*}
    \Delta t_{\mathrm{hyp}}=\mathrm{CFL}\,\frac{\Delta x}{
    \displaystyle\max_i\big(|u_m(U_i^n)|+c(U_i^n)\big)}.
\end{equation*}

Explicit treatment of wall friction and viscous relaxation also requires a source time-step restriction.
For the longitudinal family, define the conservative momentum--moment vector
\begin{equation*}
    \boldsymbol w=(hu_m,h\alpha_1,\ldots,h\alpha_N)^T.
\end{equation*}
The transverse family has the analogous vector $(hv_m,h\beta_1,\ldots,h\beta_N)^T$ and the same relaxation matrix below.
The source-only subsystem has constant depth and takes the linear form
\begin{equation}\label{eq:source-relaxation-matrix}
    \boldsymbol w_t=-\mathsf M(h)\boldsymbol w,
    \qquad
    \mathsf M(h)=\mathsf D_N\left[
    r_w(h)\mathbf 1\mathbf 1^T+r_\nu(h)\hat{\mathsf C}
    \right],
\end{equation}
where
\begin{equation*}
\begin{aligned}
    \mathsf D_N&=\operatorname{diag}(1,3,5,\ldots,2N+1),
    &\mathbf 1&=(1,\ldots,1)^T,\\
    \hat{\mathsf C}&=\begin{pmatrix}0&0\\0&\mathsf C\end{pmatrix},
    &\mathsf C&=(C_{ij})_{i,j=1}^N,\\
    r_w(h)&=\frac{\gamma}{\varepsilon h},
    &r_\nu(h)&=\frac{\iRe}{\varepsilon h^2}.
\end{aligned}
\end{equation*}
Let $h_{\min}^n=\min_i h_i^n$, $r_w^n=r_w(h_{\min}^n)$, and $r_\nu^n=r_\nu(h_{\min}^n)$.
A uniform bound on the maximum absolute row sum is
\begin{equation}\label{eq:source-rate-bound}
    \kappa_{\mathrm{src}}^n
    =\max\left\{
    (N+1)r_w^n,
    \max_{1\leq i\leq N}(2i+1)\left[
    (N+1)r_w^n+r_\nu^n\sum_{j=1}^N|C_{ij}|
    \right]
    \right\}.
\end{equation}
Thus, for every cell, the momentum--moment block of the source Jacobian satisfies $\rho(\mathsf M(h_i^n))\leq\|\mathsf M(h_i^n)\|_\infty\leq\kappa_{\mathrm{src}}^n$.
The frictional tests use
\begin{equation}\label{eq:source-time-step}
    \Delta t_{\mathrm{src}}
    =\frac{\mathrm{CFL}_{\mathrm{src}}}{\kappa_{\mathrm{src}}^n},
    \qquad
    \mathrm{CFL}_{\mathrm{src}}=0.5,
\end{equation}
with $\Delta t_{\mathrm{src}}=+\infty$ when $\kappa_{\mathrm{src}}^n=0$.
The actual explicit step is
\begin{equation*}
    \Delta t=\min\big(\Delta t_{\mathrm{hyp}},\Delta t_{\mathrm{src}}\big).
\end{equation*}
This choice ensures $\Delta t\,\|\mathsf M(h_i^n)\|_\infty\leq0.5$ but does not establish nonlinear stability or positivity of the coupled update.

\begin{proposition}[exact preservation of the prescribed discrete equilibrium]\label{prop:prescribed-equilibrium}
Suppose that the stored data $\{U_i^*,U_{i+\frac12}^*,K_i\}$ satisfy \eqref{eq:stored-equilibrium} exactly and that the same functions $A_G$, $S_x$, and $R$, the same bottom-slope samples $\partial_xh_b(x_i)$, and the same stored interface states are used in the branch construction, reconstruction, volume correction, and boundary states.
Then both the first-order method \eqref{eq:first-order-wb-euler} and the second-order method \eqref{eq:second-order-wb-ssprk2} preserve $U_i^*$ exactly in exact arithmetic.
\end{proposition}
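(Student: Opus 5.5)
The plan is to show that the semi-discrete operator vanishes identically on the stored equilibrium: $\mathcal L_i(U^*)=0$ for every $i$ when $U_i=U_i^*$ for all cells. Both time integrators are built only from convex combinations of the data and $\Delta t\,\mathcal L$. Forward Euler then returns $U^*$ exactly. For SSP-RK2, the first stage is $U^{(1)}=U^*$, so the second stage is $\tfrac12U^*+\tfrac12(U^*+0)=U^*$. The time-step choice is irrelevant once $\mathcal L(U^*)=0$. Induction over time steps then gives exact preservation for all $n$.

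To prove $\mathcal L_i(U^*)=0$, first set $U_i=U_i^*$, so that $V_i=0$ for all interior cells. Together with the boundary extensions $V_0=V_{N_x+1}=0$, every minmod argument in \eqref{eq:deviation-slope} vanishes, hence $\sigma_i=0$ in the second-order method as well as the first. The reconstruction then reduces to $P_i(x)=U_i^*(x)$. By \eqref{eq:wb-interface-traces}, both traces at each interior interface equal the single stored state $U_{i+\frac12}^*$. This is where the assumption that adjacent cells use the same stored interface state (from the third line of \eqref{eq:stored-equilibrium}, reused as the left state of the next cell) is essential. At the physical boundaries the HLL pairs become $(U_{\frac12}^*,U_{\frac12}^*)$ and $(U_{N_x+\frac12}^*,U_{N_x+\frac12}^*)$, because the exterior states are the same prescribed stored values.

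With $U_L=U_R$, the path jump \eqref{eq:path-jump-hll} is zero, since its integrand contains the factor $U_R-U_L$; the same holds for its Gauss--Legendre approximation. It remains to check that $D^\pm(U,U)=0$ in every case of the HLL solver. In the two-sided case, $U^{\mathrm{HLL}}=(s_RU-s_LU)/(s_R-s_L)=U$, so both fluctuations vanish by \eqref{eq:path-conservative-hll}. In the one-sided cases \eqref{eq:path-conservative-hll-one-sided} they equal $0$ or $\mathcal Q=0$. This holds at every interface, including both boundaries.

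Finally, the volume term \eqref{eq:second-order-volume-correction} with $U_i=U_i^*$ and $\sigma_i=0$ becomes $\Delta x[A_G(U_i^*)K_i-A_G(U_i^*)K_i+0+0]=0$. This uses the same $A_G$, $S_x$, $R$ and bottom-slope samples as in the construction, which is the reason for that hypothesis. No appeal to the stationarity identity for $\mathcal G[U_i^*](x_i)$ is even needed, since the correction is an exact difference. Hence $\mathcal L_i(U^*)=-\frac1{\Delta x}(0+0+0)=0$.

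The main point requiring care is not an estimate but bookkeeping. One must verify that no quantity is recomputed differently from how it was built, for example a slope $K_i$ re-evaluated inconsistently, or boundary states taken from extrapolation instead of the stored $U^*_{1/2}$ and $U^*_{N_x+1/2}$. One must also check that the degenerate HLL case is covered, in particular $s_L=s_R=0$, which falls under $s_L\ge0$ with $\mathcal Q=0$. The statement is made in exact arithmetic, so roundoff in the Newton solve is excluded by hypothesis.
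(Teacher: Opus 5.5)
Your proposal is correct and follows the paper's proof essentially step by step. The paper argues the same chain: $V_i=0$ and $\sigma_i=0$, coinciding stored interface traces giving $\mathcal Q=0$ and vanishing fluctuations at every interface including the boundaries, the exact cancellation in $\mathcal C_i$, and then the forward Euler and SSP-RK2 stage updates. Your explicit verification of the two-sided and degenerate one-sided HLL cases, and your remark that the stationarity identity is not needed, are small additions to steps the paper states without detail.
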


\begin{proof}
Let $U^n=U^*$.
For either spatial order,
\begin{equation*}
    V_i=U_i^n-U_i^*=0,\qquad \sigma_i=0,\qquad P_i(x)=U_i^*(x).
\end{equation*}
At every interface, including the prescribed boundaries,
\begin{equation*}
\begin{aligned}
    U_{i+\frac12}^-&=U_{i+\frac12}^+=U_{i+\frac12}^*,\\
    \mathcal Q(U_{i+\frac12}^*,U_{i+\frac12}^*)&=\int_0^1 A_G(U_{i+\frac12}^*)\,0\,\md s=0,\\
    D_{i+\frac12}^-&=D_{i+\frac12}^+=0.
\end{aligned}
\end{equation*}
Moreover, \eqref{eq:second-order-volume-correction} gives
\begin{equation*}
\begin{aligned}
    \mathcal C_i(U^*)=\Delta x\Big[&A_G(U_i^*)K_i-A_G(U_i^*)K_i\\
    &+\big(S_x(U_i^*)-S_x(U_i^*)\big)\partial_xh_b(x_i)
    +R(U_i^*)-R(U_i^*)\Big]=0,
\end{aligned}
\end{equation*}
and hence
\begin{equation*}
    \mathcal L_i(U^*)=-\frac{1}{\Delta x}(0+0+0)=0.
\end{equation*}
The forward Euler update is therefore
\begin{equation*}
    U_i^{n+1}=U_i^*+\Delta t\,\mathcal L_i(U^*)=U_i^*,
\end{equation*}
while the SSP-RK2 stages satisfy
\begin{equation*}
\begin{aligned}
    U_i^{(1)}&=U_i^*+\Delta t\,\mathcal L_i(U^*)=U_i^*,\\
    U_i^{n+1}&=\frac12U_i^*+\frac12\left(U_i^{(1)}+\Delta t\,\mathcal L_i(U^{(1)})\right)
    =\frac12U_i^*+\frac12U_i^*=U_i^*.
\end{aligned}
\end{equation*}
\end{proof}
With identical stored interface states and consistent equilibrium subtraction, the residual at the stored branch is limited by roundoff.
The nonlinear-solver tolerance affects the accuracy of the stored branch, which must be assessed separately from its preservation.

\section{Numerical experiments}
\label{sec:tests}

{
In this section, several numerical tests are considered to validate the $G$-SWLME and the first- and second-order well-balanced schemes.
We first examine two-dimensional moment dynamics through mesh refinement and comparison with a vertically resolved hydrostatic reference.
We then test the preservation of lake-at-rest and prescribed quasi-two-dimensional moving equilibria, followed by the evolution of finite-amplitude and weak perturbations.
Subsequently, we use radial-collapse problems, starting either from rest or with initial vertical shear, to compare the moment-model solutions with three-dimensional OpenFOAM simulations.
}
\subsection{Two-dimensional moment dynamics}
\label{subsec:moment-dynamics-test}

We consider two numerical studies of smooth, two-dimensional flows with nonzero moments.
The first assesses spatial discretization error for each fixed order $N=0,1,2$ of the $G\text{-}\mathrm{SWLME}$ by mesh refinement.
The second compares these models with a vertically resolved numerical solution of the hydrostatic system \eqref{eq:normalized-hydrostatic-system}, specialized to a flat bed with $\iRe=\gamma=0$.
Following the reference-solution approach of \cite{Kowalski2019}, we discretize the vertical coordinate directly; the present reference calculation retains both horizontal coordinates.

The dimensionless domain is $(x,y)\in[0,1]^2$, both horizontal directions are periodic, the bottom is flat, $G=1$, and the final time is $T=0.1$.
The initial depth and mean velocities are
\begin{equation}\label{eq:moment-dynamics-initial-means}
\begin{aligned}
h(x,y,0)&=1+0.05\cos(2\pi x)\cos(2\pi y),\\
u_m(x,y,0)&=0.2+0.02\sin(2\pi y),\\
v_m(x,y,0)&=0.1+0.02\sin(2\pi x).
\end{aligned}
\end{equation}
The nonzero initial moment coefficients are
\begin{equation}\label{eq:moment-dynamics-initial-moments}
\begin{aligned}
\alpha_1(x,y,0)&=0.03\left[1+0.2\cos(2\pi x)\right],\\
\beta_1(x,y,0)&=0.02\left[1+0.2\sin(2\pi y)\right],\\
\alpha_2(x,y,0)&=0.01\sin\left(2\pi(x+y)\right),\\
\beta_2(x,y,0)&=0.015\cos\left(2\pi(x-y)\right).
\end{aligned}
\end{equation}
With $\phi_1(\zeta)=1-2\zeta$ and $\phi_2(\zeta)=1-6\zeta+6\zeta^2$, the vertically resolved initial velocities are
\begin{equation}\label{eq:moment-dynamics-initial-profiles}
\begin{aligned}
\widetilde u(x,y,\zeta,0)&=u_m(x,y,0)+\alpha_1(x,y,0)\phi_1(\zeta)+\alpha_2(x,y,0)\phi_2(\zeta),\\
\widetilde v(x,y,\zeta,0)&=v_m(x,y,0)+\beta_1(x,y,0)\phi_1(\zeta)+\beta_2(x,y,0)\phi_2(\zeta).
\end{aligned}
\end{equation}
All higher moments are initially zero, and models with $N<2$ use the corresponding truncation of \eqref{eq:moment-dynamics-initial-profiles}.
The choice $\iRe=\gamma=0$ eliminates vertical viscous diffusion and bottom friction.
The mapped vertical velocity satisfies $\omega=0$ at $\zeta=0,1$.

For the moment calculations, monotonized-central MUSCL reconstruction is combined with a local Lax--Friedrichs flux for the conservative part, centered differences for the smooth nonconservative products, and third-order SSP Runge--Kutta time stepping.
This discretization is used only for the smooth flows considered here.
Table~\ref{tab:moment-dynamics-setup} lists the refinement grids and CFL numbers, where $n=N_x=N_y$ and $N_\zeta$ is the number of vertical layers in the reference calculation.

\begin{table}[H]
\centering
\small
\begin{tabular}{lccc}
\toprule
Calculation & $n$ & $N_\zeta$ & CFL \\
\midrule
Moment spatial refinement & $32,64,128,256,512,1024$ & -- & $0.3$ \\
Moment temporal refinement & $256$ & -- & $0.3,0.15,0.075$ \\
Reference horizontal refinement & $32,64,128,256$ & $64$ & $0.2$ \\
Reference vertical refinement & $128$ & $16,32,64,128$ & $0.2$ \\
Reference temporal refinement & $128$ & $64$ & $0.2,0.1,0.05$ \\
\bottomrule
\end{tabular}
\caption{Refinement parameters for the two-dimensional moment studies.
All moment calculations use $N=0,1,2$; the horizontal grid has $n^2$ cells.}
\label{tab:moment-dynamics-setup}
\end{table}

We estimate spatial discretization error by comparing each moment system on successive grids.
For each conservative component $q$, let $q_n$ denote its numerical approximation on the $n^2$ grid at $t=0.1$, and define
\begin{equation*}
E_n(q)=\left\|q_n-\mathcal R_n q_{2n}\right\|_{L^2},
\qquad
p_n(q)=\log_2\!\left(\frac{E_{n/2}(q)}{E_n(q)}\right),
\end{equation*}
where $\mathcal R_n$ averages each block of four fine-grid cell values onto the corresponding coarse cell, and the norm is the area-weighted discrete $L^2$ norm.
The observed order $p_n(q)$ is reported for $n\geq64$.
Table~\ref{tab:moment-dynamics-convergence} shows decreasing grid differences and approximately second-order spatial convergence for all conservative components of the $N=2$ model.
The results for $N=0,1$ exhibit similar convergence rates and are omitted for brevity.

\begin{table}[H]
\centering
\setlength{\tabcolsep}{2.4pt}
\renewcommand{\arraystretch}{1.10}
\resizebox{0.85\linewidth}{!}{\begin{tabular}{cccc*{4}{!{\hspace{7pt}}cc}}
\toprule
\multirow{2}{*}{$N$}
& \multirow{2}{*}{Moment}
& \multicolumn{2}{c}{$n=32$}
& \multicolumn{2}{c}{$n=64$}
& \multicolumn{2}{c}{$n=128$}
& \multicolumn{2}{c}{$n=256$}
& \multicolumn{2}{c}{$n=512$} \\
\cmidrule(lr){3-4}\cmidrule(lr){5-6}\cmidrule(lr){7-8}\cmidrule(lr){9-10}\cmidrule(lr){11-12}
&
& Error & Rate
& Error & Rate
& Error & Rate
& Error & Rate
& Error & Rate \\
\midrule
\multirow{7}{*}{2}
& $h$          & $7.748\mathrm{E}{-5}$ & -- & $1.526\mathrm{E}{-5}$ & 2.344 & $3.189\mathrm{E}{-6}$ & 2.259 & $6.420\mathrm{E}{-7}$ & 2.312 & $1.263\mathrm{E}{-7}$ & 2.346 \\
& $hu_m$       & $7.548\mathrm{E}{-5}$ & -- & $2.090\mathrm{E}{-5}$ & 1.853 & $5.013\mathrm{E}{-6}$ & 2.060 & $1.216\mathrm{E}{-6}$ & 2.044 & $2.834\mathrm{E}{-7}$ & 2.101 \\
& $hv_m$       & $7.611\mathrm{E}{-5}$ & -- & $1.851\mathrm{E}{-5}$ & 2.040 & $4.533\mathrm{E}{-6}$ & 2.030 & $1.057\mathrm{E}{-6}$ & 2.101 & $2.551\mathrm{E}{-7}$ & 2.050 \\
& $h\alpha_1$ & $2.783\mathrm{E}{-5}$ & -- & $8.438\mathrm{E}{-6}$ & 1.722 & $2.448\mathrm{E}{-6}$ & 1.785 & $6.855\mathrm{E}{-7}$ & 1.836 & $1.875\mathrm{E}{-7}$ & 1.870 \\
& $h\beta_1$  & $1.574\mathrm{E}{-5}$ & -- & $4.753\mathrm{E}{-6}$ & 1.727 & $1.312\mathrm{E}{-6}$ & 1.857 & $3.391\mathrm{E}{-7}$ & 1.952 & $8.347\mathrm{E}{-8}$ & 2.022 \\
& $h\alpha_2$ & $6.455\mathrm{E}{-5}$ & -- & $1.970\mathrm{E}{-5}$ & 1.712 & $4.868\mathrm{E}{-6}$ & 2.017 & $1.056\mathrm{E}{-6}$ & 2.205 & $2.181\mathrm{E}{-7}$ & 2.275 \\
& $h\beta_2$  & $9.198\mathrm{E}{-5}$ & -- & $2.818\mathrm{E}{-5}$ & 1.707 & $6.884\mathrm{E}{-6}$ & 2.033 & $1.442\mathrm{E}{-6}$ & 2.255 & $2.933\mathrm{E}{-7}$ & 2.298 \\
\bottomrule
\end{tabular}
}
\caption{Spatial refinement of the $G\text{-}\mathrm{SWLME}$ with $N=2$.
The Error and Rate columns report $E_n(q)$ and $p_n(q)$, respectively.}
\label{tab:moment-dynamics-convergence}
\end{table}

In the second study, the $N=0,1,2$ moment solutions are compared with a vertically resolved approximation of \eqref{eq:normalized-hydrostatic-system} on a uniform grid in $\zeta$.
The reference solver uses monotonized-central MUSCL reconstruction, a common-speed Rusanov flux in each horizontal direction, and third-order SSP Runge--Kutta time stepping.
At each Runge--Kutta stage, the vertical mass flux $W=h\omega$ is recovered from discrete continuity, satisfying the zero-flux conditions at the bottom and free surface to roundoff.
Vertical advection uses limited upwind reconstruction, with the time step constrained by both horizontal and vertical advective CFL conditions.
Depth averages and moment coefficients are computed by midpoint quadrature in $\zeta$ with the same Legendre normalization as the moment models.
Table~\ref{tab:moment-dynamics-setup} specifies the independent horizontal, vertical, and temporal refinement checks.
Figures~\ref{fig:moment-dynamics-fields}--\ref{fig:moment-dynamics-profiles} compare the $256^2$ moment solutions with the $256^2\times128$ reference at $t=0.1$.
The moment and reference solutions use CFL numbers $0.3$ and $0.1$, respectively.

\begin{figure}[H]
\centering
\includegraphics[width=0.85\linewidth]{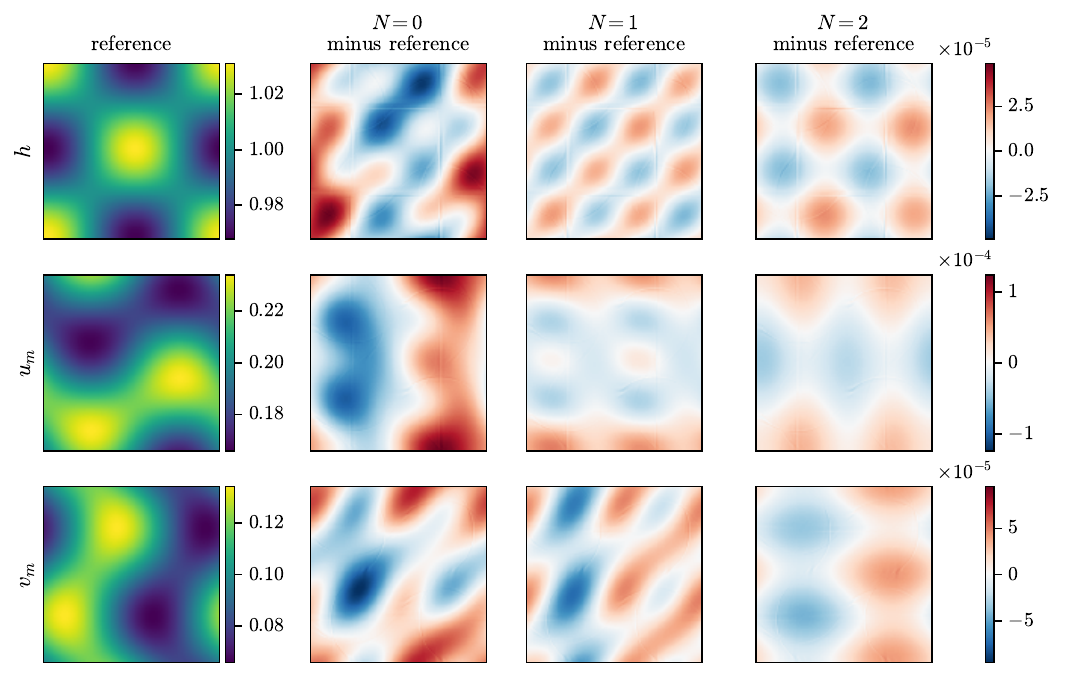}
\caption{Reference fields $h$, $u_m$, and $v_m$ (first column) and the corresponding moment-model differences from the reference for $N=0,1,2$ at $t=0.1$.}
\label{fig:moment-dynamics-fields}
\end{figure}

Figure~\ref{fig:moment-dynamics-fields} shows small differences in depth and depth-averaged velocities relative to the variations of the reference fields.
The $N=2$ model generally gives closer agreement than $N=0$, although the improvement varies across components and locations.

\begin{figure}[H]
\centering
\includegraphics[width=0.45\linewidth]{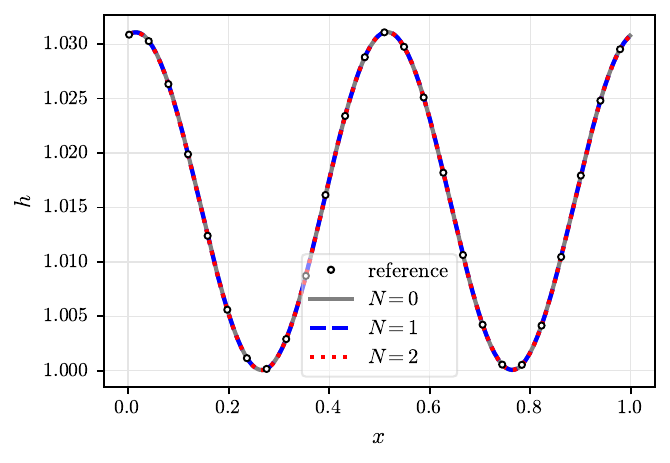}
\caption{Water depth at $t=0.1$ on the diagonal transect $y=x$.
Open black circles denote the reference, while gray solid, blue dashed, and red dotted curves denote $N=0,1,2$, respectively.}
\label{fig:moment-dynamics-height}
\end{figure}

\begin{figure}[H]
\centering
\includegraphics[width=0.65\linewidth]{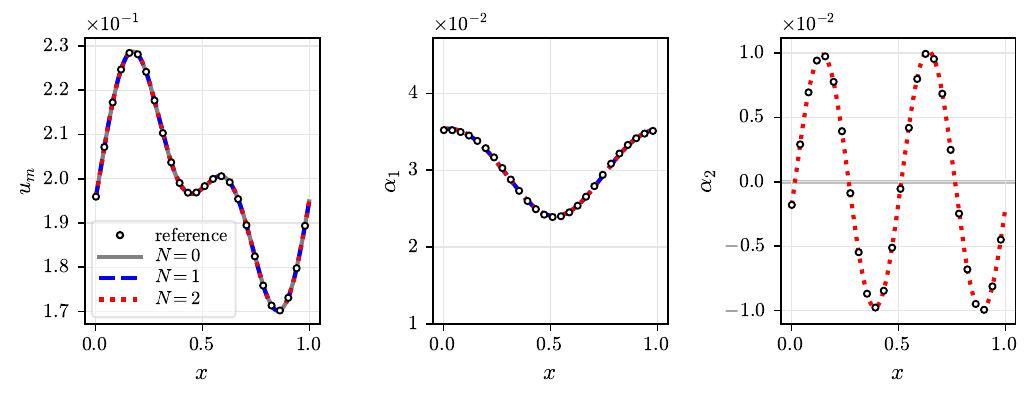}
\caption{Comparison of $u_m$, $\alpha_1$, and $\alpha_2$ from left to right at $t=0.1$ on the diagonal transect $y=x$.
The symbols and line styles are those of Figure~\ref{fig:moment-dynamics-height}, and a moment curve appears only when that coefficient is retained by the corresponding model order.}
\label{fig:moment-dynamics-u}
\end{figure}

\begin{figure}[H]
\centering
\includegraphics[width=0.65\linewidth]{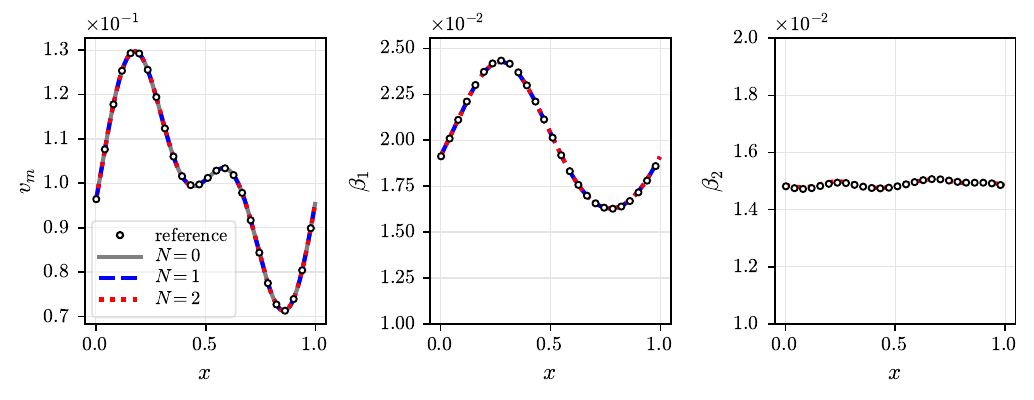}
\caption{Comparison of $v_m$, $\beta_1$, and $\beta_2$ from left to right at $t=0.1$ on the diagonal transect $y=x$.
The symbols and line styles are those of Figure~\ref{fig:moment-dynamics-height}, and a moment curve appears only when that coefficient is retained by the corresponding model order.}
\label{fig:moment-dynamics-v}
\end{figure}

The diagonal profiles in Figures~\ref{fig:moment-dynamics-height}--\ref{fig:moment-dynamics-v} show close agreement in depth and depth-averaged velocities for all three moment orders.
The $N=1,2$ models also reproduce the first moments, while the $N=2$ model captures the second moments of the reference solution.

\begin{figure}[H]
\centering
\includegraphics[width=0.65\linewidth]{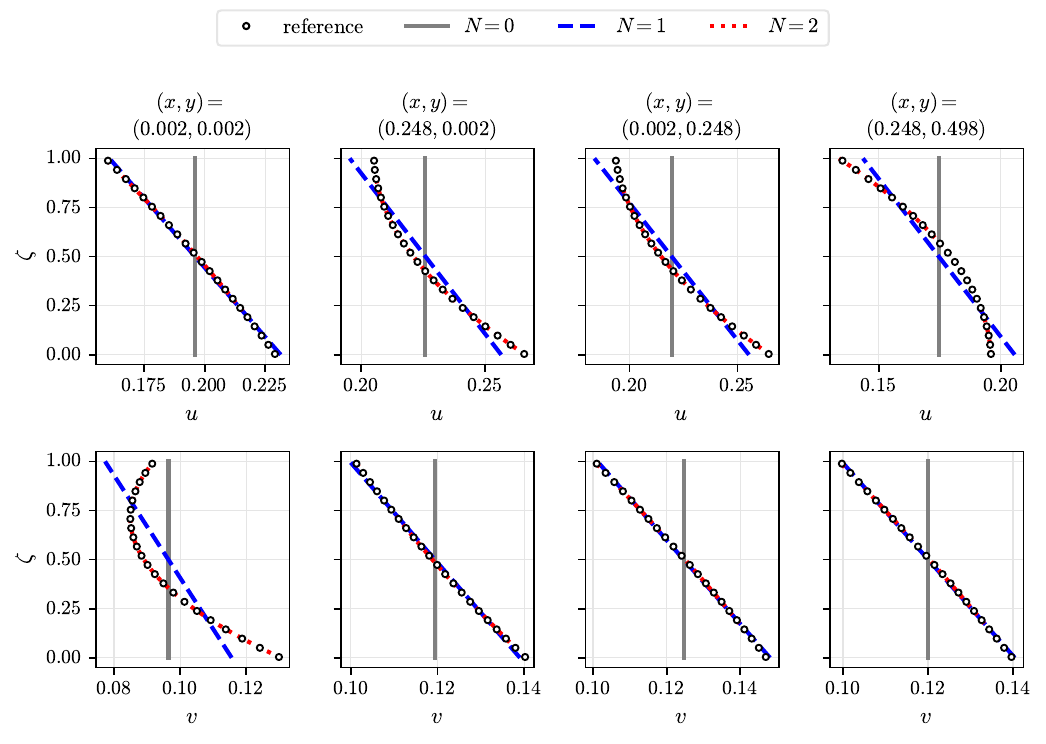}
\caption{Reference and reconstructed $u(\zeta)$ and $v(\zeta)$ profiles at $t=0.1$ in four representative water columns.
The cell-center coordinates are stated above the upper panels, and the symbols and line styles are those of Figure~\ref{fig:moment-dynamics-height}.}
\label{fig:moment-dynamics-profiles}
\end{figure}

Figure~\ref{fig:moment-dynamics-profiles} illustrates the effect of moment order on the reconstructed vertical profiles of the horizontal velocities.
The $N=0$ model gives depth-uniform velocities, whereas $N=1$ represents linear variation and $N=2$ additionally captures curvature.
The quadratic reconstructions closely follow the reference profiles at the selected locations, demonstrating improved representation of vertical structure for this smooth test.
Reference-grid sensitivity limits a quantitative interpretation of the smallest componentwise differences.

\subsection{Lake-at-rest preservation}
\label{subsec:lake-at-rest-test}
\label{ex:lake-at-rest}

We test lake-at-rest preservation over a non-flat bottom for the quasi-two-dimensional system \eqref{eq:quasi-2d} with $N=2$, $x\in[0,1]$, $G=1$, and $\gamma=\iRe=0$.
The state vector is
\begin{equation}\label{eq:lake-rest-state}
U=(h,hu_m,hv_m,h\alpha_1,h\beta_1,h\alpha_2,h\beta_2)^T.
\end{equation}
The prescribed equilibrium satisfies
\begin{equation}\label{eq:lake-rest-equilibrium}
\eta^*=h^*+h_b=\eta_0=1,\qquad
u_m^*=v_m^*=\alpha_1^*=\beta_1^*=\alpha_2^*=\beta_2^*=0,
\end{equation}
with bottom topography
\begin{equation}\label{eq:lake-rest-bottom}
h_b(x)
=0.15+0.20\exp\!\left[-100(x-0.32)^2\right]
+0.10\exp\!\left[-200(x-0.70)^2\right]
+0.03\sin(2\pi x).
\end{equation}

The bottom is represented by its continuous piecewise-linear interpolant through the cell-face values, with $h_{b,i}=(h_{b,i-\frac12}+h_{b,i+\frac12})/2$ and $(\partial_xh_b)_i=(h_{b,i+\frac12}-h_{b,i-\frac12})/\Delta x$.
The equilibrium depths $h_i^*=\eta_0-h_{b,i}$ and $h_{i\pm\frac12}^*=\eta_0-h_{b,i\pm\frac12}$ satisfy the discrete stationary balance, and the corresponding equilibrium states are imposed at both boundaries.
Since $A_G(U^*)$ is singular at rest, these states are prescribed analytically.

We compare the standard first-order path-conservative HLL method with the first- and second-order well-balanced methods of Section~\ref{sec:well-balanced}, denoted by WB1 and WB2.
All three methods use the same equilibrium and boundary data, HLL speed estimates, and four-point Gauss--Legendre path quadrature.
The HLL baseline uses a midpoint bed-slope source and forward Euler time stepping; WB1 and WB2 use the reconstruction, volume correction, and time integration defined in Section~\ref{sec:well-balanced}.
Table~\ref{tab:lake-rest-setup} summarizes the remaining numerical parameters.

\begin{table}[H]
\centering
\begin{tabular}{ll}
\toprule
Quantity & Value \\
\midrule
Mesh sequence & $N_x=50,100,200,400$ \\
CFL number and final time & $\mathrm{CFL}=0.25$ and $T=10$ \\
History output & $N_x=200$ at $t_k=k/4$, $k=0,\ldots,40$ \\
\bottomrule
\end{tabular}
\caption{Numerical parameters for the lake-at-rest preservation test.}
\label{tab:lake-rest-setup}
\end{table}

For $\mathsf{m}\in\{\mathrm{HLL},\mathrm{WB1},\mathrm{WB2}\}$, let $\mathcal L_h^{\mathsf{m}}$ denote the spatial operator and define
\begin{equation}\label{eq:lake-rest-diagnostics}
\begin{aligned}
R_\infty^{\mathsf{m}}
&=\max_i\left\|\mathcal L_{h,i}^{\mathsf{m}}(U^*)\right\|_\infty,\\
E_\eta^{\mathsf{m}}(t)
&=\left\|h^{\mathsf{m}}(t)+h_b-\eta_0\right\|_\infty,\\
E_u^{\mathsf{m}}(t)
&=\left\|u_m^{\mathsf{m}}(t)\right\|_\infty.
\end{aligned}
\end{equation}
Table~\ref{tab:lake-rest-results} reports the initial residuals and the errors at $t=10$ for each method and mesh.
The initial residual measures imbalance in all state components; $E_\eta(10)$ measures the final free-surface error.

\begin{figure}[H]
\centering
\includegraphics[width=0.5\linewidth]{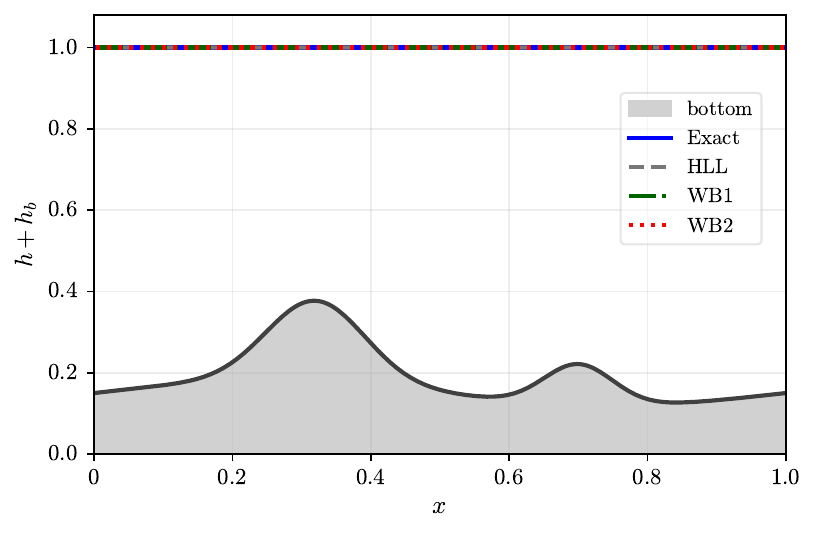}
\caption{Bottom topography and free-surface elevation $\eta=h+h_b$ at $t=10$ on the $N_x=200$ mesh.
The shaded region shows the bottom, and the blue solid line (Exact) denotes the exact free surface $\eta_0=1$.
Gray dashed, dark green dash-dotted, and red dotted lines denote HLL, WB1, and WB2, respectively.
The surface errors are shown in Figure~\ref{fig:lake-surface-error}.}
\label{fig:lake-surface-profile}
\end{figure}

\begin{figure}[H]
\centering
\includegraphics[width=0.5\linewidth]{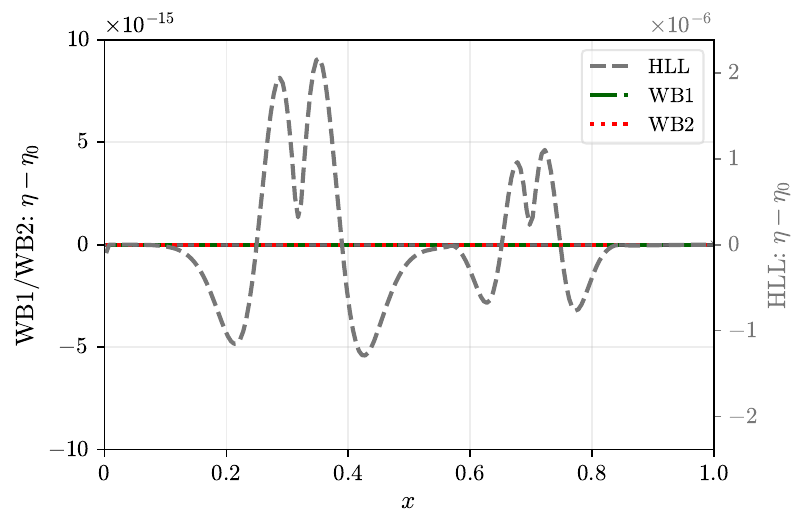}
\caption{Free-surface error $h+h_b-\eta_0$ at $t=10$ on the $N_x=200$ mesh.
The left axis shows the WB1 and WB2 errors, and the gray right axis shows the HLL error.
The WB1 and WB2 errors are zero in the saved double-precision data, whereas the maximum absolute HLL error is $10^{-6}$.}
\label{fig:lake-surface-error}
\end{figure}

Figure~\ref{fig:lake-surface-profile} compares the computed free surfaces with the exact level $\eta_0=1$ over the non-flat bottom at $t=10$.
All three numerical curves appear to coincide with the exact surface at this plotting scale.
Figure~\ref{fig:lake-surface-error} resolves the difference between the methods: WB1 and WB2 have zero computed surface error, whereas HLL produces spatially varying errors of order $10^{-6}$ near the bottom variations.

\begin{figure}[H]
\centering
\includegraphics[width=0.45\linewidth]{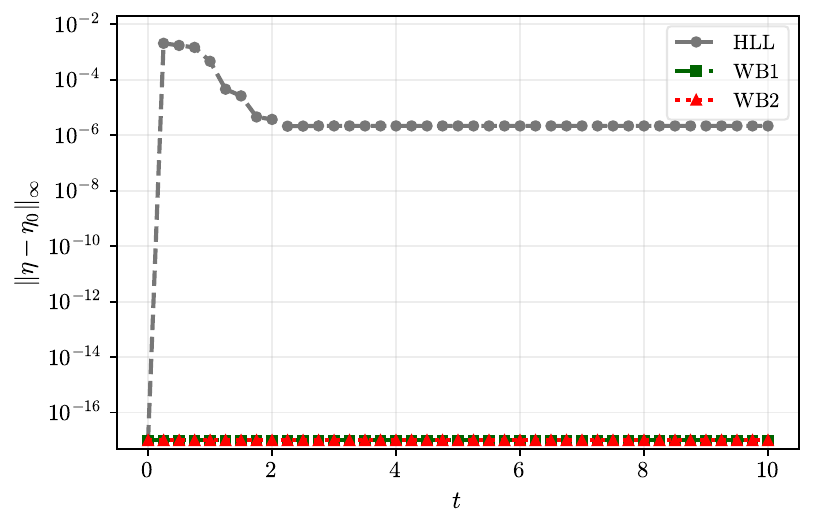}\hfill
\includegraphics[width=0.45\linewidth]{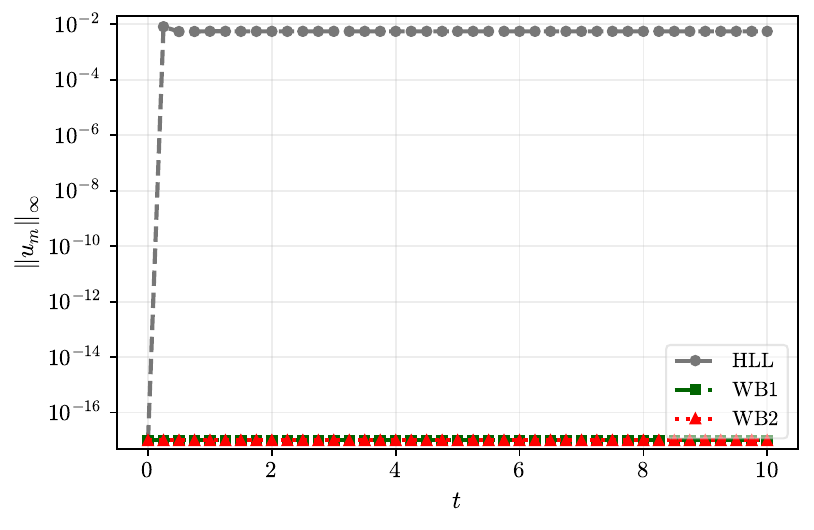}
\caption{Temporal histories of the free-surface error $E_\eta(t)$ (left) and velocity error $E_u(t)$ (right) on the $N_x=200$ mesh.
The method colors and line styles are those of Figure~\ref{fig:lake-surface-profile}.
Zero errors for WB1 and WB2 are displayed at $10^{-17}$ on the logarithmic axes.}
\label{fig:lake-error-histories}
\end{figure}

Figure~\ref{fig:lake-error-histories} shows that WB1 and WB2 maintain zero computed free-surface and mean-velocity errors at all recorded times.
The standard HLL method develops spurious errors from the stationary initial data; after an initial transient, the surface error decreases to a nonzero level while the velocity error persists.
Thus, both well-balanced methods maintain the rest state throughout the simulation without generating spurious motion.

\begin{table}[H]
\centering
\begin{tabular}{llcccc}
\toprule
Method & $N_x$ & $R_\infty(t=0)$ & $E_\eta$ & $E_u$ & $\min_i h_i$ \\
\midrule
\multirow[c]{4}{*}{HLL}
& 50  & $0.305$ & $1.114\mathrm{E}{-4}$ & $2.220\mathrm{E}{-2}$ & $0.626$ \\
& 100 & $0.165$ & $1.628\mathrm{E}{-5}$ & $1.109\mathrm{E}{-2}$ & $0.623$ \\
& 200 & $8.497\mathrm{E}{-2}$ & $2.167\mathrm{E}{-6}$ & $5.545\mathrm{E}{-3}$ & $0.623$ \\
& 400 & $4.355\mathrm{E}{-2}$ & $2.756\mathrm{E}{-7}$ & $2.769\mathrm{E}{-3}$ & $0.623$ \\
\midrule
\multirow[c]{4}{*}{WB2}
& 50  & $5.161\mathrm{E}{-15}$ & $0$ & $0$ & $0.626$ \\
& 100 & $2.065\mathrm{E}{-14}$ & $0$ & $0$ & $0.623$ \\
& 200 & $3.964\mathrm{E}{-14}$ & $0$ & $0$ & $0.623$ \\
& 400 & $8.115\mathrm{E}{-14}$ & $0$ & $0$ & $0.623$ \\
\bottomrule
\end{tabular}\caption{Lake-at-rest calculation: initial spatial residual $R_\infty(t=0)$, together with errors $E_\eta$, $E_u$ and minimum depth $\min_i h_i$ evaluated at $t=10$.
The residual and errors are defined in~\eqref{eq:lake-rest-diagnostics}.
Entries reported as $0$ denote exactly zero computed errors in double-precision arithmetic; no tolerance-based truncation was applied.}
\label{tab:lake-rest-results}
\end{table}

Table~\ref{tab:lake-rest-results} confirms that WB2 preserves the lake-at-rest equilibrium to machine precision on all tested meshes.
WB1 gives identical values for every reported quantity and is omitted for brevity.
All moment components remain identically zero for every method and mesh, so their errors are omitted from the table.
The initial WB residuals are at roundoff level, and the computed free-surface and velocity errors at $t=10$ are zero in double precision.
These results verify the lake-at-rest well-balanced property, including its preservation under the second-order reconstruction and SSP-RK2 time integration of WB2.

The HLL errors decrease under mesh refinement, but the equilibrium is not preserved on any tested mesh.
Since HLL and WB1 are both first-order methods, their comparison demonstrates the benefit of maintaining the discrete balance between the flux and bottom-slope contributions.
This test concerns the special equilibrium with vanishing velocities and moments; Section~\ref{subsec:moving-equilibrium-test} examines moving-equilibrium preservation.

\subsection{Moving-equilibrium preservation}
\label{subsec:moving-equilibrium-test}
\label{ex:moving-equilibrium}

We test the construction and preservation of two quasi-two-dimensional moving equilibria: a frictionless flow with vanishing transverse components and a dissipative flow with nonzero transverse velocity and both moment families.
Both cases use $N=2$, $G=1$, $x\in[0,1]$, and the bottom
\begin{equation}\label{eq:moving-equilibrium-bottom}
h_b(x)=0.1\exp\!\left[-\left(\frac{x-0.5}{0.15}\right)^2\right].
\end{equation}
We use $N_x=100,200,400,800$, CFL number $0.25$, and final time $T=10$.
The HLL, WB1, and WB2 methods are those of Example~\ref{ex:lake-at-rest}, with common initial data and fixed equilibrium boundary states.
The discrete equilibrium $U_h^*$ is constructed by the midpoint-collocation scheme~\eqref{eq:stored-equilibrium}, using damped Newton iteration and a left-to-right march.
Its midpoint values initialize the simulations and are compared with an independently computed stationary reference $U^*$ to assess construction accuracy.

For a primitive component $q\in\{h,u_m,v_m,\alpha_1,\beta_1,\alpha_2,\beta_2\}$, define
\begin{equation}\label{eq:moving-equilibrium-diagnostics}
E_q
=\left\|q(U_h^*)-q(U^*)\right\|_{L_h^1},
\qquad
D_q(t)
=\left\|q(U_h(t))-q(U_h^*)\right\|_{L_h^1},
\qquad
D_{\max}(t)=\max_q D_q(t).
\end{equation}
Both $U^*$ and $U_h^*$ are evaluated at cell midpoints, so $E_q$ measures construction error, whereas $D_q(t)$ measures drift from the stored discrete equilibrium.

\subsubsection{Frictionless moving equilibrium}
\label{subsec:frictionless-equilibrium}

The first case is frictionless, with $\gamma=\iRe=0$, and uses the left primitive state
\begin{equation}\label{eq:example3a-left-state}
(h,u_m,v_m,\alpha_1,\beta_1,\alpha_2,\beta_2)(0)
=(1,0.2,0,0.05,0,0.02,0).
\end{equation}
On the invariant subspace $v_m=\beta_1=\beta_2=0$, an independent continuous reference is obtained from
\begin{equation}\label{eq:example3a-invariants}
hu_m=Q,
\qquad
\frac{u_m^2}{2}+G(h+h_b)+\frac{3}{2}\sum_{j=1}^{2}\frac{\alpha_j^2}{2j+1}=E,
\qquad
\frac{\alpha_j}{h}=C_j,
\end{equation}
where $Q$, $E$, and $C_j$ are fixed by the prescribed left state.
Substituting $u_m=Q/h$ and $\alpha_j=C_jh$ into the energy relation gives a scalar equation for $h$, whose positive deep root is computed by Brent's method.
The remaining components follow from the invariants, independently of the discrete equilibrium construction.

\begin{figure}[H]
\centering
\includegraphics[width=0.48\linewidth]{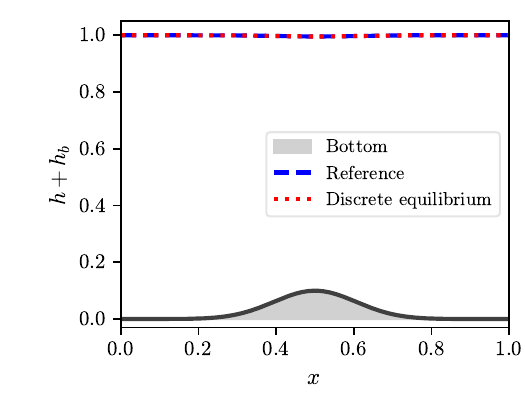}\hfill
\includegraphics[width=0.48\linewidth]{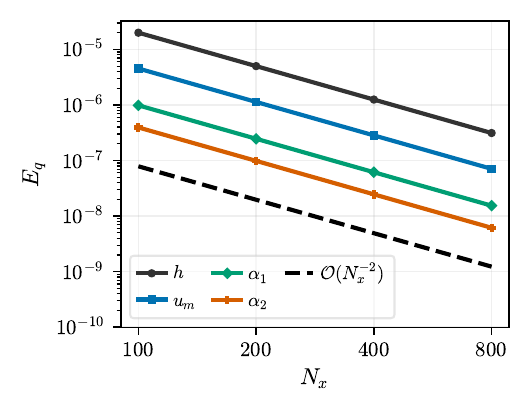}
\caption{Frictionless moving equilibrium: stationary free-surface profiles on the $N_x=800$ mesh (left) and componentwise equilibrium-construction errors $E_q$ under mesh refinement (right).
The gray shaded region denotes the bottom, the blue dashed curve (Reference) is the independently computed surface $h^*(x_i)+h_b(x_i)$, and the red dotted curve (Discrete equilibrium) is the stored midpoint-collocation surface $h_{h,i}^*+h_b(x_i)$.
The error panel shows $q=h,u_m,\alpha_1,\alpha_2$; the transverse component errors are identically zero and are omitted from the logarithmic axis.
The black dashed line is a vertically offset $\mathcal O(N_x^{-2})$ guide.}
\label{fig:frictionless-equilibrium}
\end{figure}

Figure~\ref{fig:frictionless-equilibrium} shows second-order convergence of the discrete equilibrium in all nonzero components; the transverse components remain identically zero.
The reference and discrete free surfaces are visually indistinguishable on the finest mesh.

\begin{figure}[H]
\centering
\includegraphics[width=0.5\linewidth]{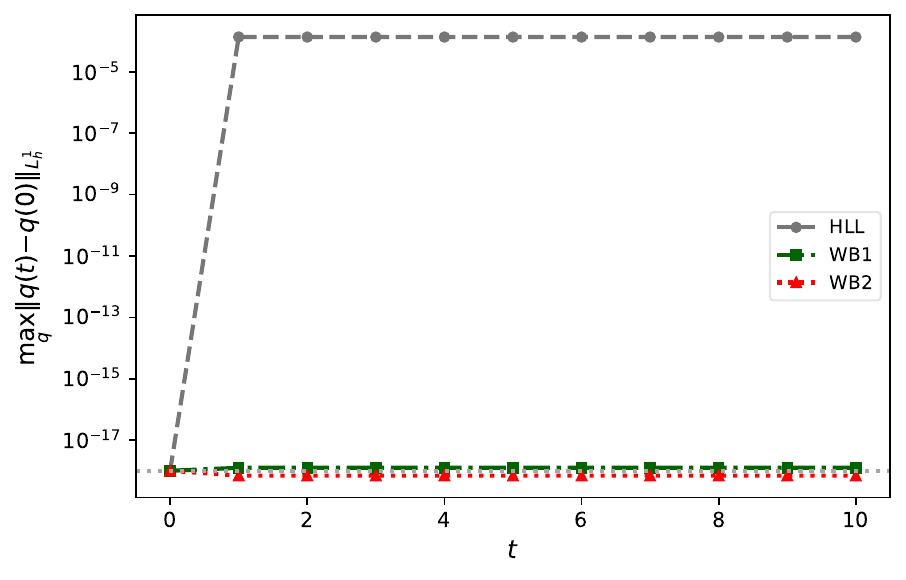}
\caption{Frictionless moving equilibrium: temporal history of the maximum componentwise drift $D_{\max}(t)$ on the $N_x=800$ mesh.
Exact zeros are displayed at $10^{-18}$ only to make them visible on the logarithmic axis.}
\label{fig:frictionless-drift}
\end{figure}

\begin{table}[H]
\centering
\begin{tabular}{rccc}
\toprule
$N_x$ & HLL & WB1 & WB2 \\
\midrule
100 & $1.097\mathrm{E}{-3}$ & $1.110\mathrm{E}{-18}$ & $0$ \\
200 & $5.478\mathrm{E}{-4}$ & $1.665\mathrm{E}{-18}$ & $1.110\mathrm{E}{-18}$ \\
400 & $2.738\mathrm{E}{-4}$ & $1.110\mathrm{E}{-18}$ & $2.776\mathrm{E}{-19}$ \\
800 & $1.369\mathrm{E}{-4}$ & $1.249\mathrm{E}{-18}$ & $6.939\mathrm{E}{-19}$ \\
\bottomrule
\end{tabular}
\caption{Frictionless moving equilibrium: final maximum componentwise $L_h^1$ drift $D_{\max}(10)$.}
\label{tab:frictionless-equilibrium-drift}
\end{table}

Figure~\ref{fig:frictionless-drift} and Table~\ref{tab:frictionless-equilibrium-drift} show that WB1 and WB2 preserve the stored frictionless equilibrium to machine precision throughout the simulation.
In contrast, HLL generates nonzero drift that decreases at approximately first order under mesh refinement.

\subsubsection{Dissipative moving equilibrium}
\label{subsec:dissipative-equilibrium}

The dissipative case has nonzero transverse velocity and both moment families, with left state
\begin{equation}\label{eq:example3b-left-state-numerical}
(h,u_m,v_m,\alpha_1,\beta_1,\alpha_2,\beta_2)(0)
=(1,0.4,0.15,0.08,-0.04,-0.03,0.05).
\end{equation}
The physical parameters are
\begin{equation}\label{eq:example3b-parameters}
\varepsilon=0.1,
\qquad
\gamma=0.002,
\qquad
\iRe=0.0005.
\end{equation}
The source is the projected Navier-slip term $R(U)$ in~\eqref{eq:quasi-2d}.
The independent reference is computed by integrating the stationary ODE from Section~\ref{subsec:stationary-wb} using the adaptive eighth-order Dormand--Prince method (DOP853), with relative and absolute tolerances $10^{-12}$ and $10^{-14}$, respectively.

\begin{figure}[H]
\centering
\includegraphics[width=0.48\linewidth]{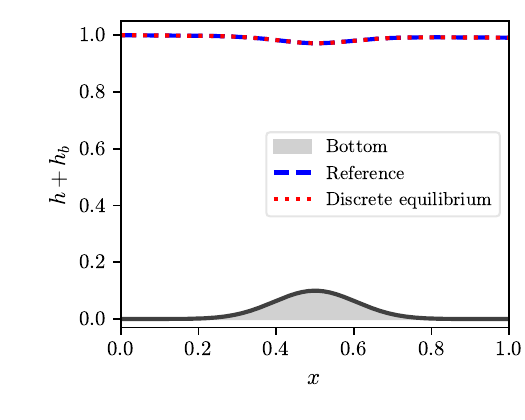}\hfill
\includegraphics[width=0.48\linewidth]{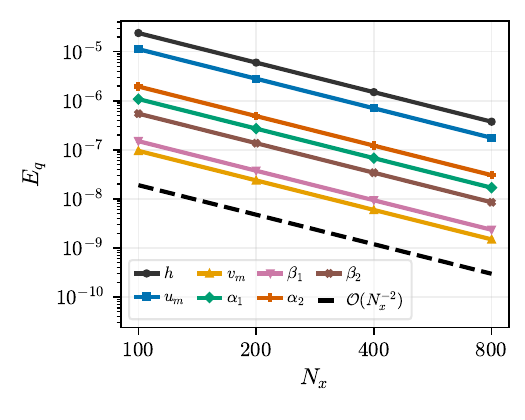}
\caption{Dissipative moving equilibrium: stationary free-surface profiles on the $N_x=800$ mesh (left) and componentwise equilibrium-construction errors $E_q$ under mesh refinement (right).
The bottom, reference surface, and discrete-equilibrium surface use the same conventions as Figure~\ref{fig:frictionless-equilibrium}.
The error panel shows all seven primitive components.
The black dashed line is a vertically offset $\mathcal O(N_x^{-2})$ guide.}
\label{fig:dissipative-equilibrium}
\end{figure}

Figure~\ref{fig:dissipative-equilibrium} shows second-order convergence in all seven primitive components.
The reference remains fully wet and noncritical, and a repeat with relaxed solver tolerances changes it by substantially less than the reported construction errors.

\begin{figure}[H]
\centering
\includegraphics[width=0.5\linewidth]{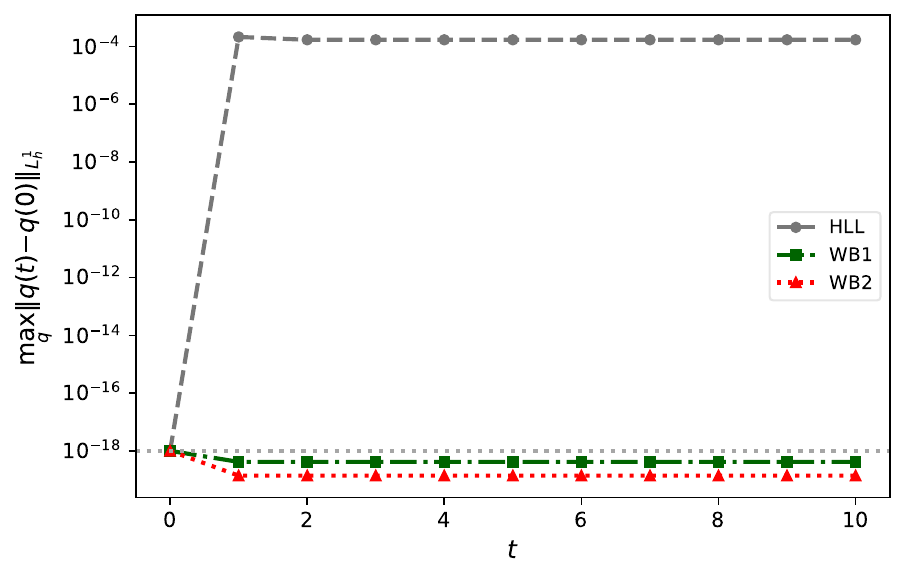}
\caption{Dissipative moving equilibrium: temporal history of the maximum componentwise drift $D_{\max}(t)$ on the $N_x=800$ mesh.
Exact zeros are displayed at $10^{-18}$ only to make them visible on the logarithmic axis.}
\label{fig:dissipative-drift}
\end{figure}

\begin{table}[H]
\centering
\begin{tabular}{rccc}
\toprule
$N_x$ & HLL & WB1 & WB2 \\
\midrule
100 & $1.377\mathrm{E}{-3}$ & $0$ & $0$ \\
200 & $6.871\mathrm{E}{-4}$ & $0$ & $0$ \\
400 & $3.431\mathrm{E}{-4}$ & $2.776\mathrm{E}{-19}$ & $0$ \\
800 & $1.714\mathrm{E}{-4}$ & $4.163\mathrm{E}{-19}$ & $1.388\mathrm{E}{-19}$ \\
\bottomrule
\end{tabular}
\caption{Dissipative moving equilibrium: final maximum componentwise $L_h^1$ drift $D_{\max}(10)$.}
\label{tab:dissipative-equilibrium-drift}
\end{table}

Figure~\ref{fig:dissipative-drift} and Table~\ref{tab:dissipative-equilibrium-drift} confirm that WB1 and WB2 also preserve the dissipative equilibrium to machine precision, while the HLL drift decreases at approximately first order under mesh refinement.
Together, the two cases verify second-order construction and machine-precision preservation of the discrete quasi-two-dimensional moving equilibria.

\subsection{Perturbation accuracy}
\label{subsec:perturbation-accuracy-test}
\label{ex:perturbation-accuracy}

We perturb the dissipative equilibrium of Section~\ref{subsec:dissipative-equilibrium}, retaining its domain, bottom topography, and physical parameters.
The initial depth is
\begin{equation}\label{eq:perturbation-accuracy-initial-data}
h(x,0)=h^*(x)+A\exp\!\left[-\frac{(x-0.5)^2}{2(0.025)^2}\right],
\end{equation}
with $A=0.05$ and $0.005$, while $u_m$, $v_m$, $\alpha_j$, and $\beta_j$ retain their equilibrium values.
The two amplitudes test accuracy away from equilibrium and the resolution of weak perturbations, respectively.

The continuous background $U^*$ is computed by DOP853 with the tolerances specified in Section~\ref{subsec:dissipative-equilibrium}.
Initial conservative cell averages are obtained by four-point Gauss--Legendre quadrature on $N_x=6400$ cells and block-averaged onto $N_x=100,200,400,800$, giving common initial data for all methods on each mesh.
The stored discrete equilibrium supplies the deviation reconstruction and fixed boundary states.

We use HLL, WB1, and WB2 with CFL number $0.25$ and final time $T=1$.
The HLL--WB1 comparison isolates the effect of well balancing at first order; WB2 additionally uses second-order reconstruction and time integration.
A WB2 solution on $N_x=6400$ cells serves as the numerical reference, with a separate $N_x=3200$ calculation and a half-CFL repeat on $N_x=800$ used to assess reference-grid and time-step sensitivity, respectively.

Let $U_h$ denote the numerical solution on the comparison mesh.
For each primitive component $q\in\{h,u_m,v_m,\alpha_1,\beta_1,\alpha_2,\beta_2\}$, define at $t=1$
\begin{equation}\label{eq:perturbation-accuracy-diagnostics}
\begin{aligned}
E_q
&=\left\|q\!\left(U_h(1)\right)-q\!\left(\mathcal{R}_h U_{\mathrm{ref}}(1)\right)\right\|_{L_h^1},
\\
S_q
&=\left\|q\!\left(\mathcal{R}_h U_{\mathrm{ref}}(1)\right)-q\!\left(\mathcal{R}_h U^*\right)\right\|_{L_h^1},
\qquad
\rho_q=\frac{E_q}{S_q}
\end{aligned}
\end{equation}
Here $\mathcal R_h$ denotes conservative block averaging onto the comparison mesh, $U_{\mathrm{ref}}$ is the numerical reference, and $S_q$ measures its departure from the continuous background.
The ratio $\rho_q$ is defined for $S_q\ne0$.
Reference-grid sensitivity is non-negligible for some moment components and is reported below together with time-step sensitivity.

\subsubsection{Finite-amplitude perturbation}
\label{subsec:finite-amplitude-perturbation}

We first set $A=0.05$ to examine the accuracy of the three methods away from equilibrium.
Figure~\ref{fig:perturbation-accuracy-a0p05} shows the final water depth perturbation and its mesh-refinement errors.

\begin{figure}[H]
\centering
\includegraphics[width=\linewidth]{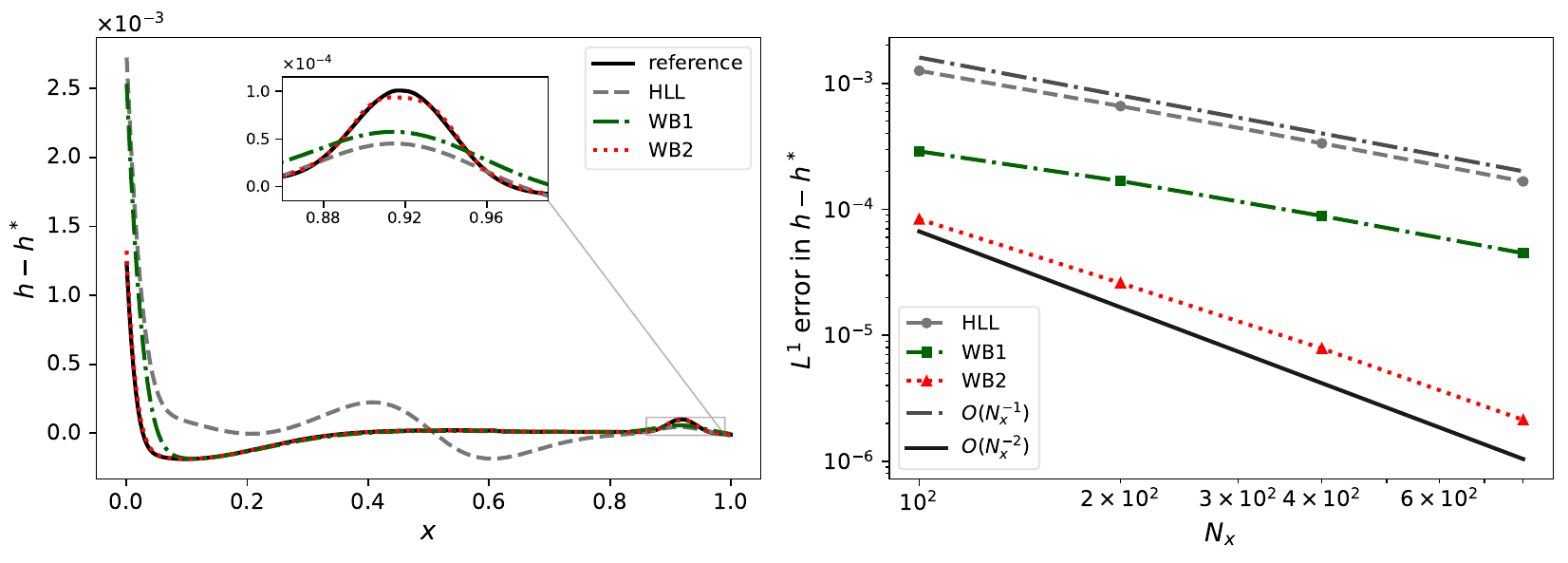}
\caption{Finite-amplitude perturbation, $A=0.05$: final depth perturbation on the $N_x=800$ mesh (left) and depth-component $L_h^1$ errors under mesh refinement (right).
The inset magnifies the depth perturbation on $0.86\le x\le0.99$.
The gray dash-dot and black solid lines are first- and second-order guides, respectively.
All errors use the restricted $N_x=6400$ WB2 solution as the numerical reference.}
\label{fig:perturbation-accuracy-a0p05}
\end{figure}

Figure~\ref{fig:perturbation-accuracy-a0p05} shows that WB2 closely reproduces the reference depth perturbation, whereas HLL introduces a pronounced spurious disturbance.
The inset reveals that HLL and WB1 broaden the bump near $x=0.92$ and underestimate its peak, while WB2 better captures its amplitude and shape.
Under mesh refinement, HLL and WB1 approach first-order convergence, with substantially smaller errors for WB1.
WB2 yields the smallest depth errors and approaches second-order convergence.

\subsubsection{Weak perturbation}
\label{subsec:weak-perturbation}

We next set $A=0.005$ to assess the resolution of a weak perturbation under the same numerical settings.

\begin{figure}[H]
\centering
\includegraphics[width=\linewidth]{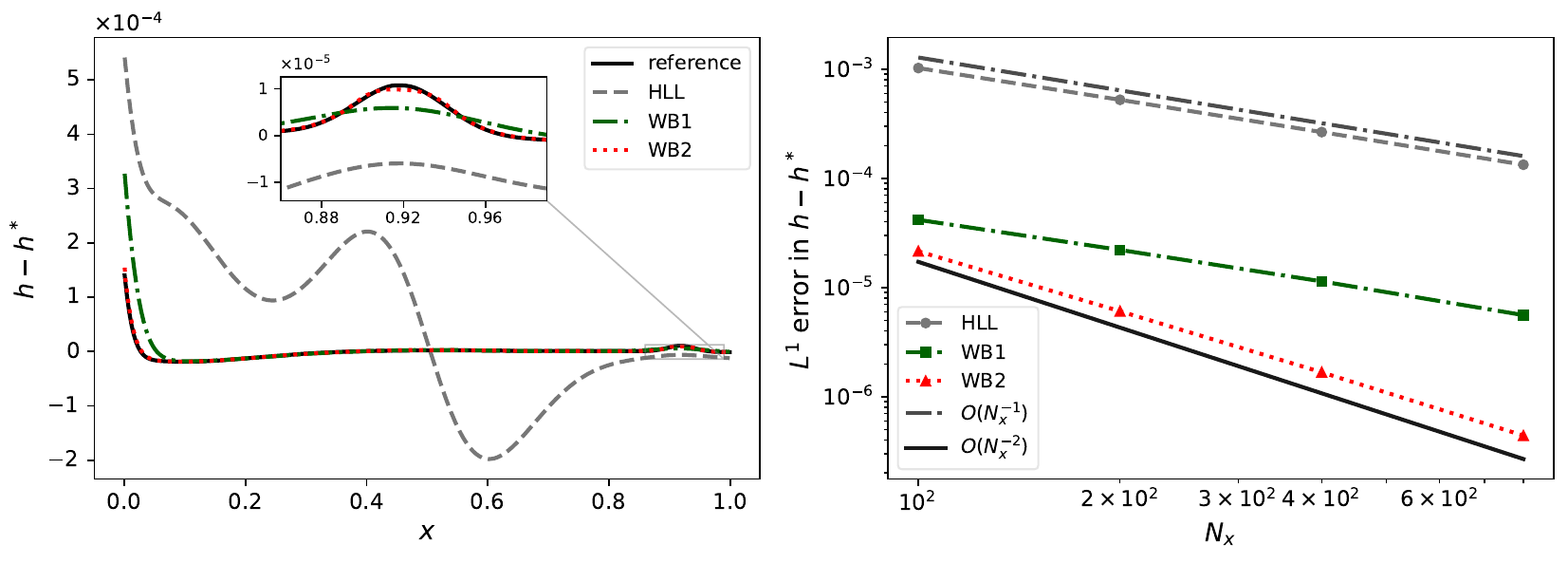}
\caption{Weak perturbation, $A=0.005$: final depth perturbation on the $N_x=800$ mesh (left) and depth-component $L_h^1$ errors under mesh refinement (right).
The inset magnifies the depth perturbation on $0.86\le x\le0.99$.
The reference and convergence-guide conventions are those of Figure~\ref{fig:perturbation-accuracy-a0p05}.
The HLL error remains comparable with the finite-amplitude case and dominates the weaker physical signal.}
\label{fig:perturbation-accuracy-a0p005}
\end{figure}

Figure~\ref{fig:perturbation-accuracy-a0p005} shows the greater importance of well balancing as the perturbation amplitude decreases.
The HLL disturbance dominates the weak physical signal, whereas WB1 and WB2 resolve the perturbation.
The inset shows that WB1 underestimates the bump's peak, while WB2 remains close to the reference.
HLL and WB1 again approach first-order convergence, but the error reduction from HLL to WB1 is more pronounced than in the finite-amplitude case.
WB2 gives the smallest depth errors and approaches second-order convergence.

\subsection{Radial collapse over a perfect-slip bed}
\label{ex:openfoam-collapse}

We compare frictionless moment-model solutions with three-dimensional volume-of-fluid OpenFOAM simulations for collapse from rest and collapse with initial vertical shear.
The tests examine the invariant SWE limit and the representation of vertical shear, respectively; differences between the solutions reflect both modeling assumptions and numerical errors.
Lengths, times, and velocities are expressed in metres, seconds, and metres per second, respectively; $\widetilde u$ and $\widetilde v$ also denote dimensional velocities.

\subsubsection{Collapse from rest: the invariant SWE limit}
\label{ex:openfoam-swe-limit}

We consider collapse from rest on $[0,100]^2\,\mathrm{m}^2$ over a flat bed at $z=0$, with an atmospheric boundary at $z=2\,\mathrm{m}$ and gravity $g=9.81\,\mathrm{m/s^2}$.
Writing $r=[(x-50)^2+(y-50)^2]^{1/2}$, the initial water depth is
\begin{equation}
\label{eq:openfoam-collapse-initial-depth}
h(x,y,0)=
\begin{cases}
1.5\,\mathrm{m}, & r\leq15\,\mathrm{m},\\
1.0\,\mathrm{m}, & r>15\,\mathrm{m},
\end{cases}
\end{equation}
All velocities and moment coefficients initially vanish, and solutions are compared at $t=1,2,3\,\mathrm{s}$.
The initially zero moments remain zero in the frictionless $G\text{-}\mathrm{SWLME}$, so orders $N=0,1,2$ share the SWE solution.

OpenFOAM uses a slip bed, and no-slip lateral walls.
The volume-of-fluid (VOF) method represents the water--air interface by transporting the water volume fraction $\alpha_{\mathrm{water}}$, defined as the fraction of each computational cell occupied by water.
Values $\alpha_{\mathrm{water}}=1$ and $0$ denote cells containing only water and only air, respectively, while intermediate values indicate cells containing both phases.
The horizontal mesh is $400^2$, with $N=200,400,800$ vertical cells for the mesh-sensitivity study; comparisons with the reduced models use $N=800$.
Momentum and phase-fraction advection use linear-upwind and van Leer schemes, respectively, with unit interface compression.
Adaptive first-order Euler steps start from $\Delta t=10^{-3}\,\mathrm{s}$ and limit both flow and phase-fraction Courant numbers to one.

The reduced models use a $400^2$ finite-volume mesh, monotonized-central MUSCL reconstruction, a local Lax--Friedrichs flux, SSP-RK2, CFL number $0.4$, and impermeable free-slip lateral boundaries.

With $\widehat{\alpha}_{\mathrm{water}}=\min\{1,\max\{0,\alpha_{\mathrm{water}}\}\}$, the OpenFOAM water depth and depth-averaged velocities are
\begin{equation}
\label{eq:openfoam-projected-observables}
\begin{aligned}
h_{\alpha}
&=\int_0^{2\,\mathrm{m}}\widehat{\alpha}_{\mathrm{water}}\,\md z,\\
u_m^{\mathrm{OF}}
&=\frac{q_x}{h_{\alpha}},
\qquad q_x=\int_0^{2\,\mathrm{m}}\widehat{\alpha}_{\mathrm{water}}u^{\mathrm{OF}}\,\md z,\\
v_m^{\mathrm{OF}}
&=\frac{q_y}{h_{\alpha}},
\qquad q_y=\int_0^{2\,\mathrm{m}}\widehat{\alpha}_{\mathrm{water}}v^{\mathrm{OF}}\,\md z.
\end{aligned}
\end{equation}
Here $u^{\mathrm{OF}}$ and $v^{\mathrm{OF}}$ are the horizontal velocity components, and $h_\alpha$ is the water volume per unit horizontal area.
Unclipped projections for $N=800$ give only small changes in these quantities.
The horizontal profiles are evaluated on the common cell-center line $y=49.875\,\mathrm{m}$.

\begin{figure}[H]
\centering
\includegraphics[width=0.96\linewidth]{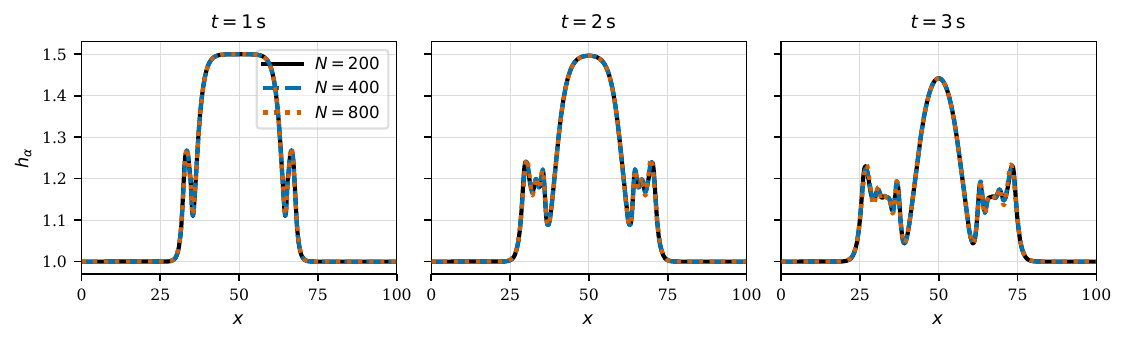}
\caption{OpenFOAM physical-vertical resolution study for the projected water depth on the horizontal line $y=49.875\,\mathrm{m}$.
The panels show $h_\alpha$ for $N=200,400,800$ at $t=1,2,3\,\mathrm{s}$.}
\label{fig:openfoam-height-resolution}
\end{figure}

Figure~\ref{fig:openfoam-height-resolution} shows nearly coincident water depth profiles and principal wave locations for all three OpenFOAM meshes, while local oscillation amplitudes retain some vertical-mesh sensitivity.
At the initial water depth ratio $h_{\mathrm{out}}/h_{\mathrm{in}}=2/3$, the observed wave trains are consistent with undular-bore structures associated with nonhydrostatic and dispersive effects in wet-bed dam-break flows \cite{Kim2011,Cantero-Chinchilla2020}.
The oscillations may also contain numerical contributions: studies of volume-of-fluid solvers report free-surface oscillations whose amplitudes depend on spatial and temporal resolution and interface treatment \cite{Larsen2019,Ferro2022}.

\begin{figure}[H]
\centering
\includegraphics[width=0.96\linewidth]{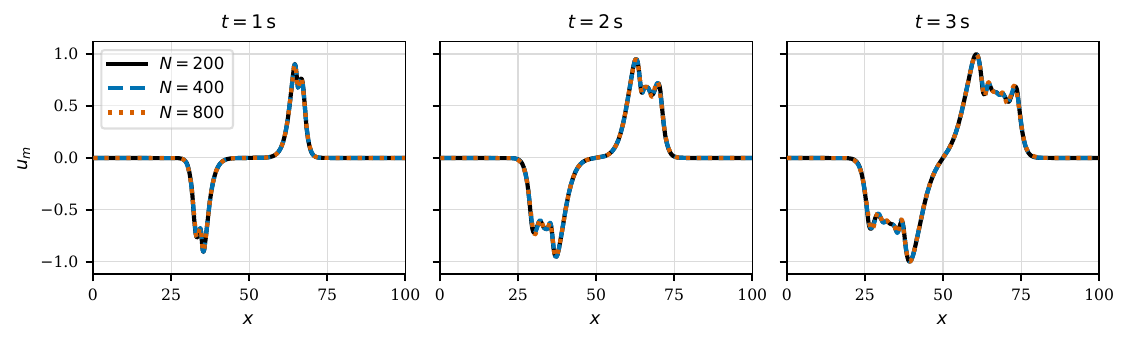}
\caption{OpenFOAM physical-vertical resolution study for the projected $x$-directed depth-averaged velocity on the horizontal line $y=49.875\,\mathrm{m}$.
The panels show $u_m$ for $N=200,400,800$ at $t=1,2,3\,\mathrm{s}$.}
\label{fig:openfoam-um-resolution}
\end{figure}

Figure~\ref{fig:openfoam-um-resolution} likewise shows close agreement in the depth-averaged velocity pattern, with the largest differences near the local extrema.

\begin{figure}[H]
\centering
\includegraphics[width=0.96\linewidth]{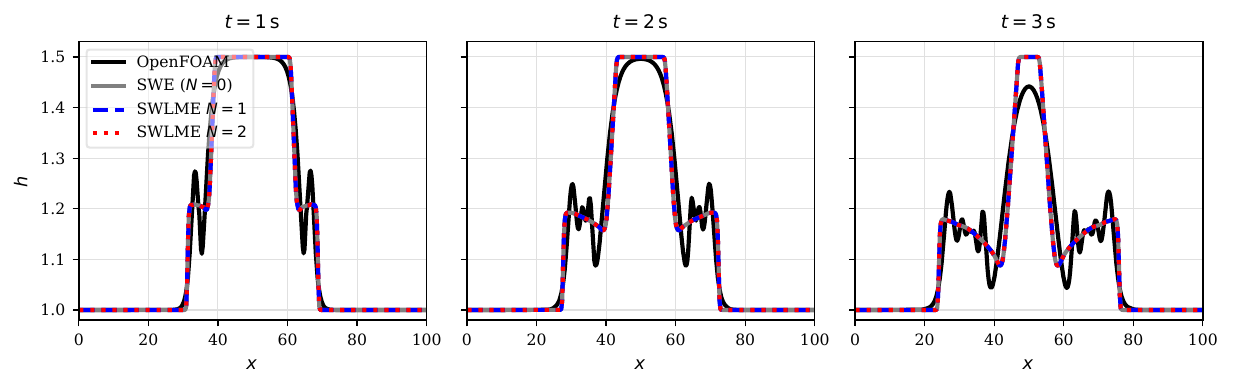}
\includegraphics[width=0.96\linewidth]{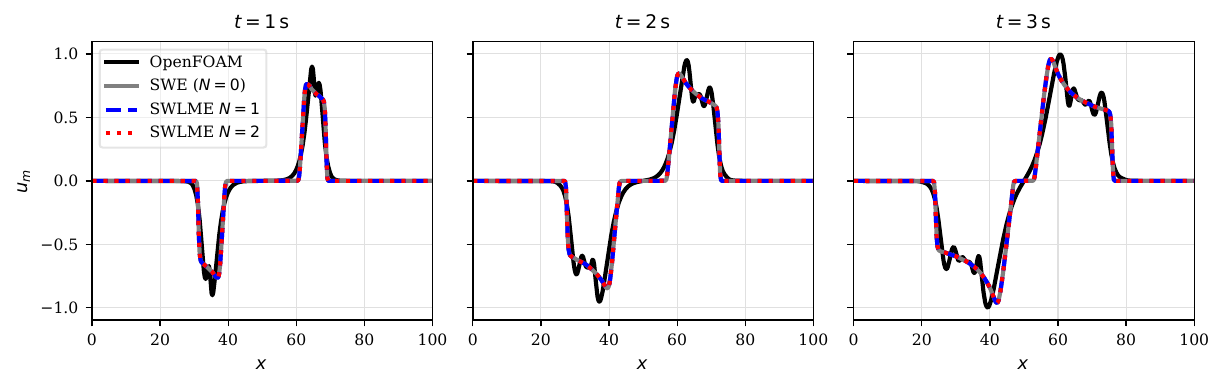}
\caption{Water depth (top) and $x$-directed depth-averaged velocity (bottom) on the horizontal line $y=49.875\,\mathrm{m}$.
The OpenFOAM $N_{\mathrm{vert}}=800$ projection is compared with the matched $400^2$ SWE and SWLME $N=1,2$ calculations at $t=1,2,3\,\mathrm{s}$.
The three reduced-model curves coincide because every moment remains zero.}
\label{fig:openfoam-collapse-slices}
\end{figure}

Figure~\ref{fig:openfoam-collapse-slices} shows that the reduced equations capture the principal inward and outward wave locations and the large-scale depth-averaged velocity pattern of the OpenFOAM calculation.
The three reduced-model curves coincide because the initially vanishing moments remain zero.

Vertical profiles of the horizontal velocity are compared at $(x,y)=(64.875,49.875)\,\mathrm{m}$, near the initial water depth discontinuity.
To exclude mixed interface cells and numerical overshoots, we retain OpenFOAM samples whose original, unclipped water fraction equals one exactly in the stored six-significant-digit data.
This restriction gives a comparison within the water interior, excluding the pointwise free-surface velocity.
The reduced profiles use $\util=u_m+\sum_{j=1}^N\alpha_j\phi_j$.

\begin{figure}[H]
\centering
\includegraphics[width=0.96\linewidth]{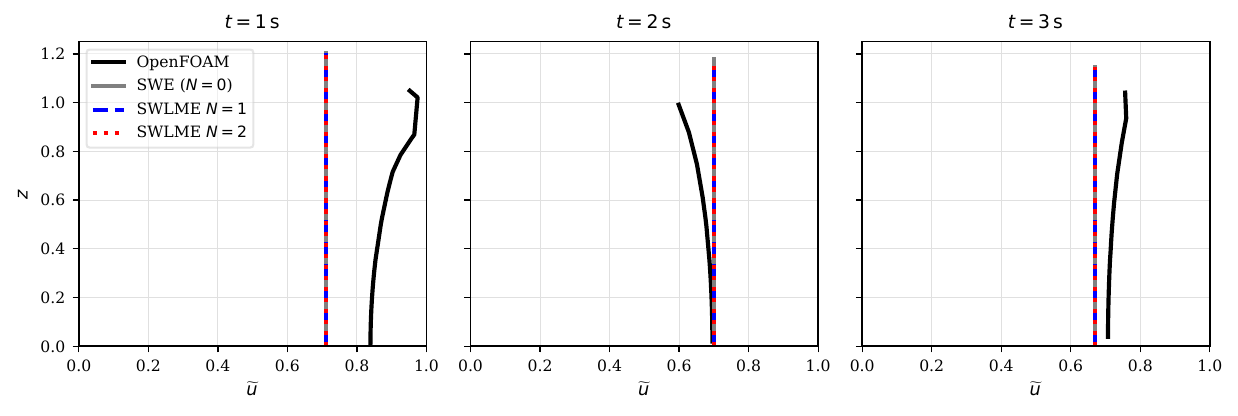}
\caption{Vertical profiles of the horizontal velocity $\util(z)$ at $(x,y)=(64.875,49.875)\,\mathrm{m}$.
The OpenFOAM curve connects cell centers with original, unclipped stored $\alpha_{\mathrm{water}}=1$ exactly for $N_{\mathrm{vert}}=800$; its endpoints are not interpreted as pointwise free-surface velocities.
The SWE and SWLME $N=1,2$ curves are reconstructed from their matched $400^2$ states.}
\label{fig:openfoam-collapse-vertical}
\end{figure}

Figure~\ref{fig:openfoam-collapse-vertical} shows vertical variation in the selected OpenFOAM velocity samples, whereas all three reduced profiles are plug profiles because the moments vanish.
At $t=1,2,3\,\mathrm{s}$, the common reduced-model values of $u_m$ are $0.712$, $0.701$, and $0.671\,\mathrm{m/s}$, while the corresponding projected OpenFOAM depth-averaged velocities are $0.884$, $0.624$, and $0.718\,\mathrm{m/s}$.
The perfect-slip collapse verifies the invariant SWE limit of the $G\text{-}\mathrm{SWLME}$ and provides a bulk-flow comparison.

\subsubsection{Collapse with initial vertical shear}
\label{ex:openfoam-initial-shear}

We retain the domain, initial water depth, gravity, fluid properties, and boundary conditions of Section~\ref{ex:openfoam-swe-limit}, and impose a localized rotational flow with nonzero moments to assess the representation of vertical shear as the moment order increases.
Let $X=x-50\,\mathrm{m}$, $Y=y-50\,\mathrm{m}$, and $r=(X^2+Y^2)^{1/2}$.
The initial depth-averaged velocity is
\begin{equation}
\label{eq:openfoam-shear-mean}
\begin{aligned}
 B(r)&=\left[\max\left\{1-\frac{r^2}{R^2},0\right\}\right]^3,
 & R&=12\,\mathrm{m},\\
 (u_{m,0},v_{m,0})&=\Omega B(r)(-Y,X),
 & \Omega&=\frac{U_{\max}\sqrt{7}}{R}\left(\frac76\right)^3,
 \qquad U_{\max}=0.4\,\mathrm{m/s}.
\end{aligned}
\end{equation}
This counterclockwise flow has maximum speed $U_{\max}$ and vanishes for $r\ge R$.
For $h_0=h(x,y,0)$ and $\zeta=z/h_0$, the initial water velocity is
\begin{equation}
\label{eq:openfoam-shear-profile}
\begin{aligned}
 f(\zeta)&=1+0.3\cos(\pi\zeta)+0.2\cos(2\pi\zeta),\\
 (\widetilde u_0,\widetilde v_0,w_0)&=
 \bigl(u_{m,0}f(\zeta),v_{m,0}f(\zeta),0\bigr).
\end{aligned}
\end{equation}
The function $f$ has unit depth average and satisfies $f'(0)=f'(1)=0$, and the initial water velocity is divergence-free.
The air velocity continues smoothly to zero at the atmospheric top, and pressure is initialized hydrostatically.
The reduced initial data are obtained by Legendre projection and conservative horizontal cell averaging, with moment coefficients
\begin{equation}
\label{eq:openfoam-shear-moments}
 (\alpha_{1,0},\beta_{1,0})=\frac{3.6}{\pi^2}(u_{m,0},v_{m,0}),
 \qquad
 (\alpha_{2,0},\beta_{2,0})=\frac{3}{\pi^2}(u_{m,0},v_{m,0}).
\end{equation}
Each model retains only the coefficients allowed by its order.
For the additional $N=3,4$ tests, the same initial profile gives
\begin{equation}
\label{eq:openfoam-shear-third-moment}
\begin{aligned}
 (\alpha_{3,0},\beta_{3,0})&=\frac{50.4(\pi^2-10)}{\pi^4}(u_{m,0},v_{m,0}),\\
 (\alpha_{4,0},\beta_{4,0})&=\left(\frac{18}{\pi^2}-\frac{189}{\pi^4}\right)(u_{m,0},v_{m,0}).
\end{aligned}
\end{equation}
The physical-profile comparison additionally includes $N=3,4$ on the common $400^2$ horizontal grid; the depth, mean-velocity, and moment-coefficient comparisons retain $N=0,1,2$.
The meshes, time integrators, and comparison times follow Section~\ref{ex:openfoam-swe-limit}.
The moment systems use generalized-minmod reconstruction, a path-conservative local Lax--Friedrichs discretization, and CFL number $0.3$.
For OpenFOAM, both Courant limits are $0.25$ and the maximum time step is $0.01\,\mathrm{s}$.

Water depth and depth-averaged velocities are computed from~\eqref{eq:openfoam-projected-observables}, and the OpenFOAM moment coefficients are
\begin{equation}
\label{eq:openfoam-shear-volume-coordinate}
\begin{aligned}
 \alpha_j&=\frac{2j+1}{h_\alpha}\int_0^{2\,\mathrm{m}}\widehat{\alpha}_{\mathrm{water}}(z)u(z)\phi_j(s(z))\,\md z,\\
 \beta_j&=\frac{2j+1}{h_\alpha}\int_0^{2\,\mathrm{m}}\widehat{\alpha}_{\mathrm{water}}(z)v(z)\phi_j(s(z))\,\md z,
\end{aligned}
\qquad
s(z)=\frac{1}{h_\alpha}\int_0^z\widehat{\alpha}_{\mathrm{water}}(z')\,\md z',
\quad j=1,2.
\end{equation}
Here $s(z)$ is the fraction of the total column water volume below height $z$; it equals $z/h_\alpha$ within a sharp water column extending from the bed.

\begin{figure}[H]
\centering
\includegraphics[width=0.85\linewidth]{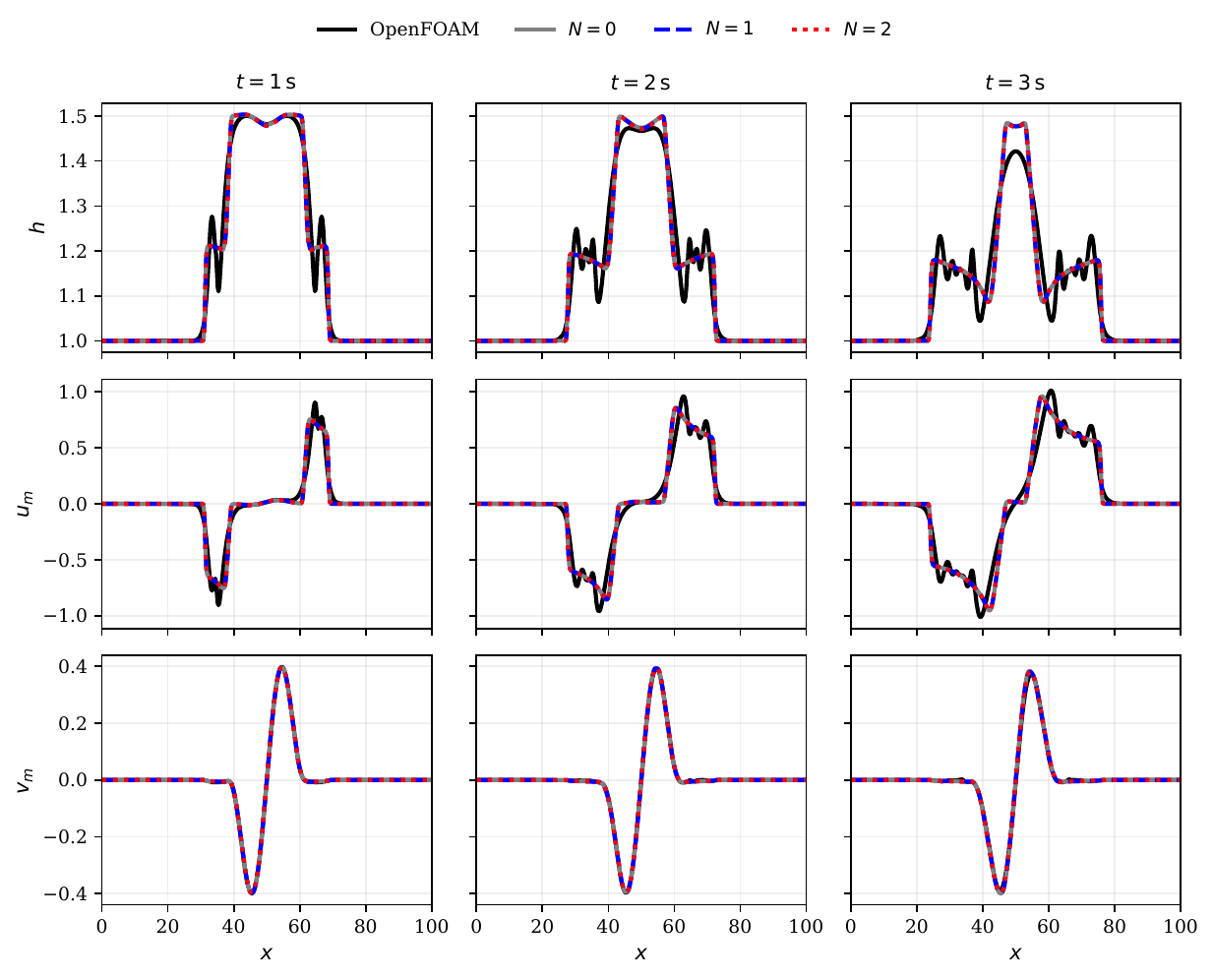}
\caption{Water depth and depth-averaged velocity comparisons on $y=49.875\,\mathrm{m}$ at $t=1,2,3\,\mathrm{s}$.
OpenFOAM uses 800 vertical cells; $N=0,1,2$ in the legend denotes the moment order on the common $400^2$ horizontal grid.}
\label{fig:openfoam-shear-slices}
\end{figure}

Figure~\ref{fig:openfoam-shear-slices} shows close agreement in the water depth $h$ and depth-averaged velocities $u_m$ and $v_m$, with nearly overlapping predictions for $N=0,1,2$.
On the near-center slice $y=49.875\,\mathrm{m}$, $u_m$ and $v_m$ approximately represent radial and azimuthal motion, respectively, up to a sign change across the center.
The initial circular jump in water depth drives radial gravity waves, directly coupling the water depth evolution to $u_m$ and producing their more complex wave structure.
The additional OpenFOAM oscillations are concentrated in these two quantities, consistent with the wave-associated structures and numerical sensitivity discussed for Figure~\ref{fig:openfoam-height-resolution}.
In contrast, $v_m$ primarily reflects the initially smooth, localized rotation, with only a small contribution from radial motion on this slice.
Its simpler profile is reproduced more closely by the reduced models, supporting their representation of the evolving mean rotational flow.
A plot of $v_m$ against $y$ along $x=49.875\,\mathrm{m}$ would exhibit radial-collapse behavior similar to that of $u_m$ against $x$ in Figure~\ref{fig:openfoam-shear-slices} and is therefore omitted.

\begin{figure}[H]
\centering
\includegraphics[width=0.85\linewidth]{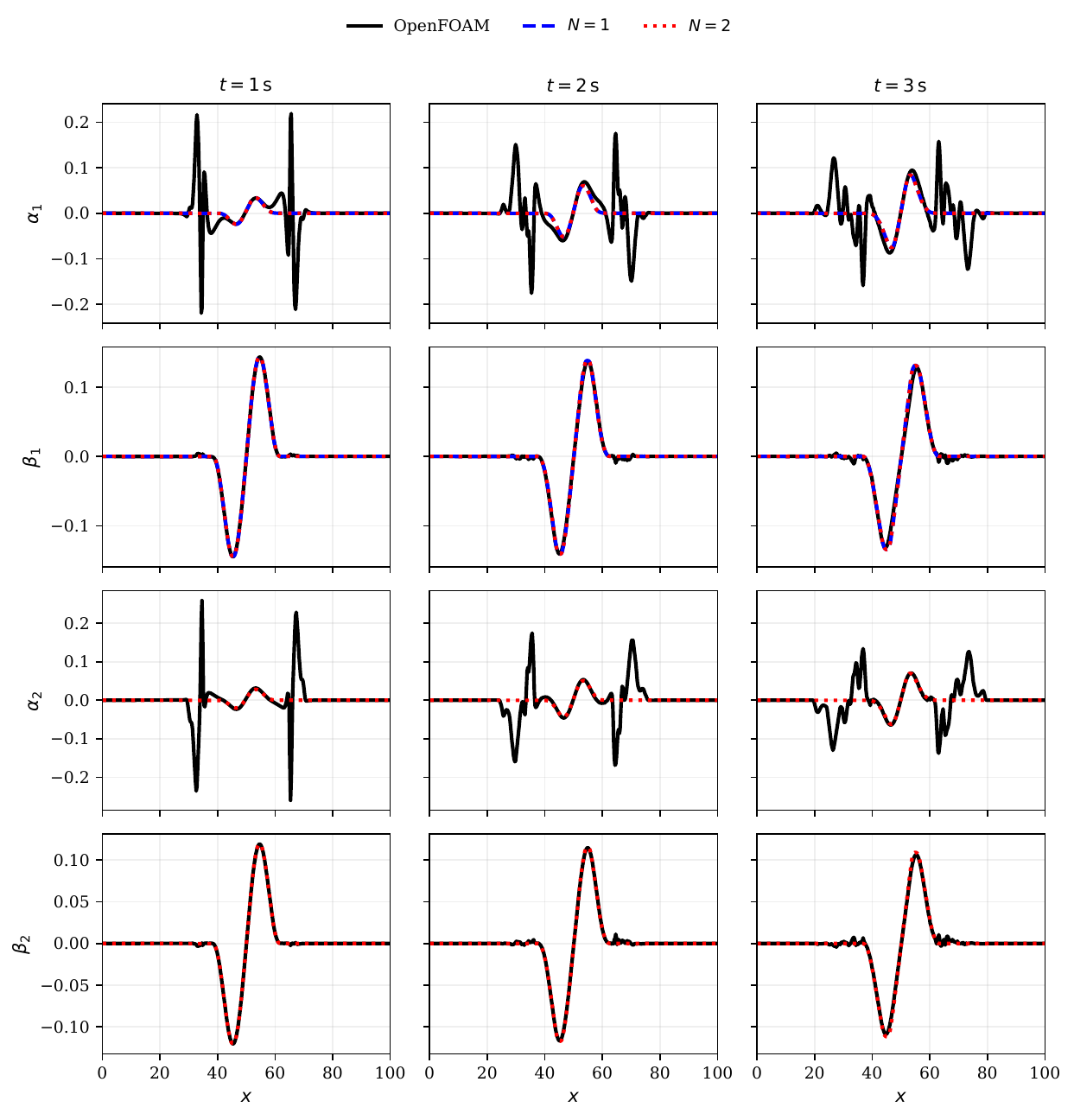}
\caption{First and second moment comparisons on the same horizontal line and at the same times as Figure~\ref{fig:openfoam-shear-slices}.
The OpenFOAM coefficients use the cumulative water-volume projection; a reduced-model curve is shown only when that moment is retained.}
\label{fig:openfoam-shear-moments}
\end{figure}

Figure~\ref{fig:openfoam-shear-moments} shows close agreement in $\alpha_1$ and $\alpha_2$ away from the collapse-wave regions, including the nonzero central profiles and the nearly zero moments in the undisturbed outer regions.
The largest differences occur near the propagating collapse waves, where OpenFOAM exhibits large variations and the reduced coefficients remain small.
On this near-center horizontal slice, $\alpha_j$ primarily describes vertical variations in radial motion, so its more complex profile is associated with the vertical structure of the collapse waves.
In the collapse-wave regions, increasing the moment order from $N=1$ to $N=2$ produces little change in $\alpha_1$ and does not substantially improve its agreement with OpenFOAM.
The remaining differences may reflect contributions from the hydrostatic moment approximation, numerical discretization, and water-volume projection.

The reduced models closely reproduce the amplitudes and shapes of $\beta_1$ and $\beta_2$ over the displayed times.
These coefficients primarily describe vertical variations in azimuthal motion on this slice, and their smoother profiles reflect the evolution of the initially smooth, localized rotational shear.
Their close agreement supports the representation of this rotational shear.
Plots of $\beta_j$ against $y$ along $x=49.875\,\mathrm{m}$ would exhibit behavior similar to that of $\alpha_j$ against $x$ and are therefore omitted.

\begin{figure}[H]
\centering
\includegraphics[width=0.96\linewidth]{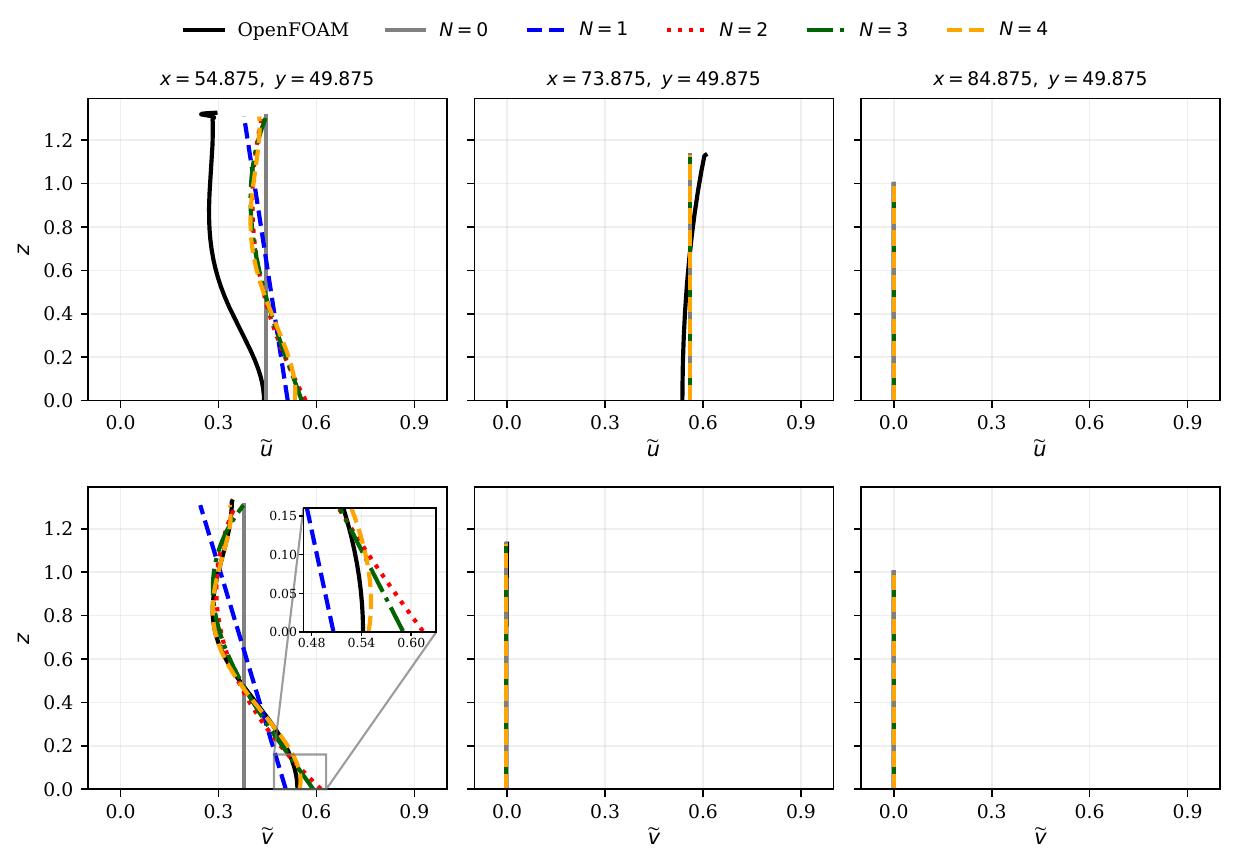}
\caption{Physical vertical profiles of both horizontal velocity components at $t=3\,\mathrm{s}$ and the indicated cell-center locations.
From left to right, the columns correspond to the central rotating region, the outward collapse wave, and the nearly undisturbed region ahead of it, respectively.
The reduced models use $N=0,1,2,3,4$ on the common $400^2$ horizontal grid, and the inset enlarges the near-bed $\widetilde v$ profiles in the left column.
OpenFOAM profiles retain consecutive cells from the bed up to, excluding, the first cell with $\alpha_{\mathrm{water}}<0.95$.
All reduced profiles are reconstructed at the retained OpenFOAM physical heights within their own water columns, evaluating the Legendre expansion at $\zeta=z/h$.
The lowest OpenFOAM sample is at $z=0.00125\,\mathrm{m}$; no extrapolation to the bed is used.}
\label{fig:openfoam-shear-profiles}
\end{figure}

To evaluate the effect of moment order on velocity profile reconstruction, we compare the $G\text{-}\mathrm{SWLME}$ at orders up to $N=4$ in Figure~\ref{fig:openfoam-shear-profiles}.
In the central rotating region (left column), the higher-order reconstructions capture the overall shapes of both velocity profiles, with a persistent offset in $\util$ relative to OpenFOAM and closer agreement in both shape and magnitude for $\vtil$.
The constant $(N=0)$ and linear $(N=1)$ reconstructions cannot represent the observed curvature. Additional basis functions improve the reconstruction, particularly the near bottom topography values of $\vtil$, as highlighted by the inset.
Within the outward collapse wave (middle column), the profiles for $N=0,1,2,3,4$ nearly coincide and capture the magnitude of $\widetilde u$, but not its modest vertical variation over the retained interval; $\widetilde v$ is nearly zero.
Ahead of the wave (right column), both horizontal velocities are nearly zero, providing an undisturbed reference.

Increasing the number of basis functions improves the representation of complex vertical profile, but does not replace an appropriate physical boundary condition. The ability to accommodate the slip is therefore an important part of the model formulation. In flows with thin boundary layers, sharp near-wall variations may require substantially more modes than the smooth interior profile.

\section{Conclusions}
We established rotational invariance of the two-dimensional SWLME, identified its loss of global hyperbolicity, and constructed the globally hyperbolic modification $G\text{-}\mathrm{SWLME}$.
For quasi-two-dimensional flows, we developed first- and second-order path-conservative finite-volume schemes that exactly preserve a prescribed discrete moving equilibrium, including Navier-slip friction and non-flat bottom topography.

Numerical tests confirm equilibrium preservation to roundoff, second-order approximation of stationary branches, and near-second-order accuracy for the tested perturbations using the second-order scheme.
The OpenFOAM comparisons illustrate the invariant SWE limit and improved rotational velocity profiles with nonzero moments, while differences remain in radial motion and full reference convergence remains unverified.
Future work will address general two-dimensional moving equilibria and further validation of reconstructed velocity profiles.

\section*{Data and code availability}
The data-generation source code, including the numerical solvers and OpenFOAM case configurations, is publicly available at \url{https://github.com/zhou-shiping/swe-well-balanced-2026} and archived on Zenodo as release v1.0~\cite{Zhou2026Code}.
The repository includes software dependencies and instructions for running the simulations locally.

\section*{Acknowledgements}
The authors acknowledge support from ONR grant N00014-24-1-2242, NSF grant DMS-2618114, AFOSR grants FA9550-24-1-0254, DOE grant DE-SC0023164, and Ralph E. Powe Junior Faculty Enhancement Award from Oak Ridge Associated Universities (ORAU).

\appendix
\section{Proof of rotational invariance}
\label{theorem:proof}

We prove Theorem~\ref{thm:rot-SWLME} using the conservative/non-conservative splitting of \cite{Bauerle2025}, adapted to the linearized moment system.

Define the rotated variables
\begin{equation*}
\begin{aligned}
    u_\theta &= \cos\theta\,u_m+\sin\theta\,v_m,
    &v_\theta &= -\sin\theta\,u_m+\cos\theta\,v_m,\\
    \alpha_{j,\theta} &= \cos\theta\,\alpha_j+\sin\theta\,\beta_j,
    &\beta_{j,\theta} &= -\sin\theta\,\alpha_j+\cos\theta\,\beta_j,
    \qquad j=1,\ldots,N.
\end{aligned}
\end{equation*}
Then
\begin{equation*}
    T(\theta)U
    =(h,hu_\theta,hv_\theta,h\alpha_{1,\theta},h\beta_{1,\theta},\ldots,h\alpha_{N,\theta},h\beta_{N,\theta})^T.
\end{equation*}
We use the identities
\begin{align*}
    \cos\theta\,u_\theta-\sin\theta\,v_\theta &= u_m,
    &\sin\theta\,u_\theta+\cos\theta\,v_\theta &= v_m, \\
    \cos\theta\,\alpha_{j,\theta}-\sin\theta\,\beta_{j,\theta} &= \alpha_j,
    &\sin\theta\,\alpha_{j,\theta}+\cos\theta\,\beta_{j,\theta} &= \beta_j.
\end{align*}
Moreover,
\begin{align}
    \cos\theta\,u_\theta^2-\sin\theta\,u_\theta v_\theta
    &=u_m u_\theta, \label{eq:rot-id-u2}\\
    \sin\theta\,u_\theta^2+\cos\theta\,u_\theta v_\theta
    &=v_m u_\theta, \notag\\
    2\cos\theta\,u_\theta\alpha_{i,\theta}
    -\sin\theta\,(u_\theta\beta_{i,\theta}+v_\theta\alpha_{i,\theta})
    &=2\cos\theta\,u_m\alpha_i
    +\sin\theta\,(u_m\beta_i+v_m\alpha_i), \label{eq:rot-id-moment-x}\\
    2\sin\theta\,u_\theta\alpha_{i,\theta}
    +\cos\theta\,(u_\theta\beta_{i,\theta}+v_\theta\alpha_{i,\theta})
    &=\cos\theta\,(u_m\beta_i+v_m\alpha_i)
    +2\sin\theta\,v_m\beta_i. \label{eq:rot-id-moment-y}
\end{align}
The same identities also hold after replacing $(u_m,v_m)$ by $(\alpha_j,\beta_j)$.

\begin{lemma}[rotational invariance of the conservative fluxes]
\label{lem:rot-FL-GL}
The conservative fluxes $F_L(U)$ and $G_L(U)$ of the SWLME satisfy
\begin{equation}\label{eq:rot-FL-GL}
    \cos\theta\,F_L(U)+\sin\theta\,G_L(U)
    =T(\theta)^{-1}F_L(T(\theta)U),
\end{equation}
for all $\theta\in[0,2\pi)$ and all admissible states $U$.
\end{lemma}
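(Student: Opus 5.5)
The plan is to verify \eqref{eq:rot-FL-GL} component by component, grouping the components of $F_L$ and $G_L$ by the blocks of $T(\theta)$. Since $T(\theta)^{-1}=\diag(1,T_2(\theta)^T,\ldots,T_2(\theta)^T)$, the right-hand side is obtained by evaluating $F_L$ at the rotated primitive variables $(h,u_\theta,v_\theta,\alpha_{j,\theta},\beta_{j,\theta})$. Each horizontal pair of the result is then rotated back by $T_2(\theta)^T$. Concretely, a pair $(a,b)$ becomes $(\cos\theta\,a-\sin\theta\,b,\ \sin\theta\,a+\cos\theta\,b)$. The scalar $h$-component is the easy case: it is $hu_\theta=h(\cos\theta\,u_m+\sin\theta\,v_m)$, which is exactly the left-hand side $\cos\theta\,hu_m+\sin\theta\,hv_m$.

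For the mean-momentum pair, I would split the flux into three parts. The first is the advective part $h(u^2,uv)$. The second is the pressure part $\frac G2h^2(1,0)$. The third is the moment part $\sum_j\frac{h}{2j+1}(\alpha_j^2,\alpha_j\beta_j)$.

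For the advective part, the back-rotated pair is $h(\cos\theta\,u_\theta^2-\sin\theta\,u_\theta v_\theta,\ \sin\theta\,u_\theta^2+\cos\theta\,u_\theta v_\theta)$. By \eqref{eq:rot-id-u2} and its companion, this equals $hu_\theta(u_m,v_m)$. Expanding $u_\theta=\cos\theta\,u_m+\sin\theta\,v_m$ then gives $\cos\theta\,h(u_m^2,u_mv_m)+\sin\theta\,h(u_mv_m,v_m^2)$, which is the $x$-part of $F_L$ times $\cos\theta$ plus the $y$-part of $G_L$ times $\sin\theta$. The moment part is identical term by term once $(u_m,v_m)$ is replaced by $(\alpha_j,\beta_j)$, as noted after \eqref{eq:rot-id-moment-y}. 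For the pressure part, rotating $(\frac G2h^2,0)$ back gives $\frac G2h^2(\cos\theta,\sin\theta)$, which matches $\cos\theta(\frac G2h^2,0)+\sin\theta(0,\frac G2h^2)$.

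For the $j$-th moment pair, the rotated flux is $h(2u_\theta\alpha_{j,\theta},\ u_\theta\beta_{j,\theta}+v_\theta\alpha_{j,\theta})$. Applying $T_2(\theta)^T$ gives precisely the left-hand sides of \eqref{eq:rot-id-moment-x}--\eqref{eq:rot-id-moment-y} with $i=j$. Their right-hand sides are $\cos\theta\,h(2u_m\alpha_j,\ u_m\beta_j+v_m\alpha_j)+\sin\theta\,h(u_m\beta_j+v_m\alpha_j,\ 2v_m\beta_j)$, which is the required combination of the $F_L$ and $G_L$ entries.

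The only real work is confirming the stated identities, and \eqref{eq:rot-id-moment-x}--\eqref{eq:rot-id-moment-y} are the main obstacle. I would prove them by noting that the pair $\bigl(2u\alpha,\ u\beta+v\alpha\bigr)$ is the first column of the symmetric tensor $\boldsymbol u\otimes\boldsymbol a+\boldsymbol a\otimes\boldsymbol u$, where $\boldsymbol u=(u,v)$ and $\boldsymbol a=(\alpha,\beta)$. The corresponding $y$-flux $\bigl(u\beta+v\alpha,\ 2v\beta\bigr)$ is its second column. The rotated vectors are $\boldsymbol u_\theta=T_2\boldsymbol u$ and $\boldsymbol a_\theta=T_2\boldsymbol a$, so the rotated tensor is $M_\theta=T_2 M T_2^T$. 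It follows that $T_2^TM_\theta e_1=MT_2^Te_1=M(\cos\theta,\sin\theta)^T$, which is exactly $\cos\theta$ times the first column plus $\sin\theta$ times the second. The same tensor argument with $\boldsymbol u\otimes\boldsymbol u$ and $\boldsymbol a\otimes\boldsymbol a$ covers the mean-momentum identities. This makes the whole proof a direct expansion, with no case distinctions needed.
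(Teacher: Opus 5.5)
Your proposal is correct and follows essentially the same route as the paper. Both check \eqref{eq:rot-FL-GL} block by block: the scalar component is immediate, and after back-rotation by $T_2(\theta)^{T}$ the mean-momentum and moment pairs reduce to the identities \eqref{eq:rot-id-u2}--\eqref{eq:rot-id-moment-y} and their $(\alpha_j,\beta_j)$ analogues. The one addition is your tensor observation $M_\theta=T_2(\theta)MT_2(\theta)^{T}$, which justifies those identities cleanly; the paper simply states them without proof.
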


\begin{proof}
We verify the identity component by component.
The first component is
\begin{equation*}
    h u_\theta
    =h(\cos\theta\,u_m+\sin\theta\,v_m)
    =\cos\theta\,(hu_m)+\sin\theta\,(hv_m),
\end{equation*}
which is the first component of the left-hand side of \eqref{eq:rot-FL-GL}.

For the momentum components, the second component of $T(\theta)^{-1}F_L(T(\theta)U)$ is
\begin{align*}
&\cos\theta\left[h\left(u_\theta^2+
\sum_{j=1}^N\frac{\alpha_{j,\theta}^2}{2j+1}\right)+\frac12Gh^2\right]
-\sin\theta\left[h\left(u_\theta v_\theta+
\sum_{j=1}^N\frac{\alpha_{j,\theta}\beta_{j,\theta}}{2j+1}\right)\right] \\
&\quad
=\cos\theta\left[h\left(u_m^2+
\sum_{j=1}^N\frac{\alpha_j^2}{2j+1}\right)+\frac12Gh^2\right]
+\sin\theta\left[h\left(u_m v_m+
\sum_{j=1}^N\frac{\alpha_j\beta_j}{2j+1}\right)\right],
\end{align*}
where we used \eqref{eq:rot-id-u2} and the corresponding identity for $(\alpha_j,\beta_j)$.
This equals the second component of $\cos\theta F_L(U)+\sin\theta G_L(U)$.
Similarly, the third component is
\begin{align*}
&\sin\theta\left[h\left(u_\theta^2+
\sum_{j=1}^N\frac{\alpha_{j,\theta}^2}{2j+1}\right)+\frac12Gh^2\right]
+\cos\theta\left[h\left(u_\theta v_\theta+
\sum_{j=1}^N\frac{\alpha_{j,\theta}\beta_{j,\theta}}{2j+1}\right)\right] \\
&\quad
=\cos\theta\left[h\left(u_m v_m+
\sum_{j=1}^N\frac{\alpha_j\beta_j}{2j+1}\right)\right]
+\sin\theta\left[h\left(v_m^2+
\sum_{j=1}^N\frac{\beta_j^2}{2j+1}\right)+\frac12Gh^2\right].
\end{align*}

For each moment pair, the corresponding two components of $F_L(T(\theta)U)$ are
\begin{equation*}
    h(2u_\theta\alpha_{i,\theta}),
    \qquad
    h(u_\theta\beta_{i,\theta}+v_\theta\alpha_{i,\theta}).
\end{equation*}
After multiplication by $T_2(\theta)^{-1}$, the first component of this pair is
\begin{align*}
&h\left[2\cos\theta\,u_\theta\alpha_{i,\theta}
-\sin\theta\,(u_\theta\beta_{i,\theta}+v_\theta\alpha_{i,\theta})\right] \\
&\quad
=\cos\theta\,h(2u_m\alpha_i)
+\sin\theta\,h(u_m\beta_i+v_m\alpha_i),
\end{align*}
by \eqref{eq:rot-id-moment-x}.
The second component is
\begin{align*}
&h\left[2\sin\theta\,u_\theta\alpha_{i,\theta}
+\cos\theta\,(u_\theta\beta_{i,\theta}+v_\theta\alpha_{i,\theta})\right] \\
&\quad
=\cos\theta\,h(u_m\beta_i+v_m\alpha_i)
+\sin\theta\,h(2v_m\beta_i),
\end{align*}
by \eqref{eq:rot-id-moment-y}.
These are exactly the corresponding components of $\cos\theta F_L(U)+\sin\theta G_L(U)$.
Since $i=1,\ldots,N$ is arbitrary, \eqref{eq:rot-FL-GL} follows.
\end{proof}

\begin{lemma}[rotational invariance of the non-conservative part]
\label{lem:rot-P1-Q1}
The non-conservative matrices $P_1(U)$ and $Q_1(U)$ of the SWLME satisfy
\begin{equation}\label{eq:rot-P1-Q1}
    \cos\theta\,P_1(U)+\sin\theta\,Q_1(U)
    =T(\theta)^{-1}P_1(T(\theta)U)T(\theta),
\end{equation}
for all $\theta\in[0,2\pi)$ and all admissible states $U$.
\end{lemma}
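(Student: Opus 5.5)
Since $P_1$, $Q_1$ and $T(\theta)$ are all block diagonal with the same structure (a scalar/$0_{3\times3}$ leading block followed by $N$ identical $2\times2$ blocks), I would reduce the identity to a single $2\times2$ block computation. The leading $3\times3$ block is zero on both sides, because $P_1$ and $Q_1$ vanish there and conjugation by $\diag(1,T_2(\theta))$ keeps it zero. On each moment block, $P_1(T(\theta)U)$ contributes $p(T(\theta)U)$, which depends only on $(u_\theta,v_\theta)$. Conjugating it gives $T_2(\theta)^{-1}p(T(\theta)U)T_2(\theta)$. So it suffices to show, for $V=(u_m,v_m)^T$ and $V_\theta=T_2(\theta)V=(u_\theta,v_\theta)^T$,
\begin{equation*}
    T_2(\theta)^{-1}p(V_\theta)T_2(\theta)=\cos\theta\,p(V)+\sin\theta\,q(V).
\end{equation*}

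I see two routes. The quickest is to apply Theorem~\ref{thm:rot-2}, taking the converse direction. By \eqref{eq:matrix-small-p-q}, $p(V)=-\begin{pmatrix}u_m&0\\v_m&0\end{pmatrix}$, which is the $c_1$ form of \eqref{eq:block-A-form} with $c_1=-1$ and all other $c_i=0$. The companion given by \eqref{eq:block-B-form} is then $-\begin{pmatrix}0&u_m\\0&v_m\end{pmatrix}=q(V)$. Hence $(p,q)$ is rotationally consistent in the sense of \eqref{eq:rot-block-definition}, and that is exactly the block identity above.

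To keep the argument self-contained, I would also check it directly. Write $p(V)=-V e_1^T$ and $q(V)=-V e_2^T$. Then
\begin{equation*}
    T_2^{-1}p(T_2V)T_2=-T_2^{-1}T_2V\,e_1^TT_2=-V\,(e_1^TT_2).
\end{equation*}
The first row of $T_2(\theta)$ is $(\cos\theta,\sin\theta)$, so
\begin{equation*}
    -V\,(e_1^TT_2)=-V(\cos\theta\,e_1^T+\sin\theta\,e_2^T)=\cos\theta\,p(V)+\sin\theta\,q(V).
\end{equation*}
This is the required block identity.

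Assembling the $N$ identical blocks and the zero leading block then gives \eqref{eq:rot-P1-Q1}. The only point that needs care is bookkeeping. The $2\times2$ blocks of $P_1$ act on the pairs $(h\alpha_j,h\beta_j)$, but their entries depend on the \emph{mean} pair $(u_m,v_m)$, not on $(\alpha_j,\beta_j)$. The argument still works because $T(\theta)$ rotates the mean pair by the same $T_2(\theta)$ that conjugates the moment blocks. I do not expect any real obstacle beyond this.
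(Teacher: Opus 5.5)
Your proof is correct and follows essentially the same route as the paper: you use the block-diagonal structure of $P_1$, $Q_1$ and $T(\theta)$ to reduce to the single $2\times2$ identity $T_2(\theta)^{-1}p(T(\theta)U)T_2(\theta)=\cos\theta\,p(U)+\sin\theta\,q(U)$, which you then verify directly. Your rank-one form $p(V)=-Ve_1^T$ is a compact version of the paper's entrywise calculation, and your alternative appeal to the converse of Theorem~\ref{thm:rot-2} with $c_1=-1$ is also valid.
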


\begin{proof}
The matrices $P_1(U)$ and $Q_1(U)$ are block diagonal with the same $2\times2$ block repeated along the moment variables.
Thus, it is enough to verify the identity for the block pair
\begin{equation*}
    p(U)=\begin{pmatrix}-u_m&0\\-v_m&0\end{pmatrix},
    \qquad
    q(U)=\begin{pmatrix}0&-u_m\\0&-v_m\end{pmatrix}.
\end{equation*}
At the rotated state $T(\theta)U$, this block becomes
\begin{equation*}
    p(T(\theta)U)=
    \begin{pmatrix}-u_\theta&0\\-v_\theta&0\end{pmatrix}.
\end{equation*}
A direct calculation gives
\begin{align*}
T_2(\theta)^{-1}p(T(\theta)U)T_2(\theta)
&=
\begin{pmatrix}
    -\cos\theta\,u_m & -\sin\theta\,u_m\\
    -\cos\theta\,v_m & -\sin\theta\,v_m
\end{pmatrix} \\
&=\cos\theta\,p(U)+\sin\theta\,q(U).
\end{align*}
Since the same block relation holds for every moment pair and $T(\theta)$ is block diagonal, assembling the block identities gives \eqref{eq:rot-P1-Q1}.
\end{proof}

\begin{proof}[Proof of Theorem~\ref{thm:rot-SWLME}]
By Lemma~\ref{lem:rot-FL-GL},
\begin{equation*}
    \cos\theta\,F_L(U)+\sin\theta\,G_L(U)
    =T(\theta)^{-1}F_L(T(\theta)U).
\end{equation*}
Taking the derivative with respect to $U$ gives
\begin{equation*}
    \cos\theta\,\partial_UF_L(U)+\sin\theta\,\partial_UG_L(U)
    =T(\theta)^{-1}\partial_UF_L(T(\theta)U)T(\theta),
\end{equation*}
because $T(\theta)$ is independent of $U$.
Adding the non-conservative identity \eqref{eq:rot-P1-Q1} yields
\begin{align*}
&\cos\theta\,[\partial_UF_L(U)+P_1(U)]
+\sin\theta\,[\partial_UG_L(U)+Q_1(U)] \\
&\quad
=T(\theta)^{-1}[\partial_UF_L(T(\theta)U)+P_1(T(\theta)U)]T(\theta).
\end{align*}
Using \eqref{eq:AL-BL-def}, we obtain
\begin{equation*}
    \cos\theta\,A_L(U)+\sin\theta\,B_L(U)
    =T(\theta)^{-1}A_L(T(\theta)U)T(\theta),
\end{equation*}
which proves Theorem~\ref{thm:rot-SWLME}.
\end{proof}

\bibliographystyle{abbrv}
\bibliography{SWE-well-balanced}

@article{Audusse2004,
  title = {A Fast and Stable Well-Balanced Scheme with Hydrostatic Reconstruction for Shallow Water Flows},
  author = {Audusse, Emmanuel and Bouchut, Fran{\c c}ois and Bristeau, Marie-Odile and Klein, Rupert and Perthame, B.},
  year = 2004,
  journal = {SIAM J. Sci. Comput.},
  volume = {25},
  number = {6},
  pages = {2050--2065},
  publisher = {{Society for Industrial and Applied Mathematics}},
  issn = {1064-8275},
  doi = {10.1137/S1064827503431090}
}

@article{Audusse2005Multilayer,
  title = {A Multilayer {Saint-Venant} Model: Derivation and Numerical Validation},
  author = {Audusse, Emmanuel},
  year = {2005},
  journal = {Discrete and Continuous Dynamical Systems - B},
  volume = {5},
  number = {2},
  pages = {189--214},
  doi = {10.3934/dcdsb.2005.5.189}
}

@article{Audusse2011Multilayer,
  title = {A Multilayer {Saint-Venant} System with Mass Exchanges for Shallow Water Flows: Derivation and Numerical Validation},
  author = {Audusse, Emmanuel and Bristeau, Marie-Odile and Perthame, Beno{\^\i}t and Sainte-Marie, Jacques},
  year = {2011},
  journal = {ESAIM: Mathematical Modelling and Numerical Analysis},
  volume = {45},
  number = {1},
  pages = {169--200},
  doi = {10.1051/m2an/2010036}
}

@article{BouchutZeitlin2010,
  title = {A Robust Well-Balanced Scheme for Multi-Layer Shallow Water Equations},
  author = {Bouchut, Fran{\c c}ois and Zeitlin, Vladimir},
  year = {2010},
  journal = {Discrete and Continuous Dynamical Systems - B},
  volume = {13},
  number = {4},
  pages = {739--758},
  doi = {10.3934/dcdsb.2010.13.739}
}

@article{Bauerle2025,
  title = {On the Rotational Invariance and Hyperbolicity of Shallow Water Moment Equations in Two Dimensions},
  author = {Bauerle, Matthew and Christlieb, Andrew J. and Ding, Mingchang and Huang, Juntao},
  year = 2025,
  journal = {SIAM J. Math. Anal.},
  volume = {57},
  number = {1},
  pages = {1039--1085},
  publisher = {{Society for Industrial and Applied Mathematics}},
  issn = {0036-1410},
  doi = {10.1137/23M1579789}
}

@article{Bermudez1994,
  title = {Upwind Methods for Hyperbolic Conservation Laws with Source Terms},
  author = {Bermudez, Alfredo and Vazquez, Ma Elena},
  year = 1994,
  journal = {Computers \& Fluids},
  volume = {23},
  number = {8},
  pages = {1049--1071},
  issn = {0045-7930},
  doi = {10.1016/0045-7930(94)90004-3}
}

@article{Caballero-Cardenas2025,
  title = {A Semi-Implicit Exactly Fully Well-Balanced Relaxation Scheme for the {{Shallow Water Linearized Moment Equations}}},
  author = {{Caballero-C{\'a}rdenas}, C. and {G{\'o}mez-Bueno}, I. and Del Grosso, A. and Koellermeier, J. and {Morales de Luna}, T.},
  year = 2025,
  journal = {Computer Methods in Applied Mechanics and Engineering},
  volume = {437},
  pages = {117788},
  issn = {0045-7825},
  doi = {10.1016/j.cma.2025.117788}
}

@article{Cao2026,
  title = {Flux Globalization Based Well-Balanced Path-Conservative Central-Upwind Schemes for Shallow Water Linearized Moment Equations},
  author = {Cao, Yangyang and Huang, Qian and Koellermeier, Julian and Kurganov, Alexander and Liu, Yongle},
  year = 2026,
  journal = {Computers \& Fluids},
  volume = {311},
  pages = {107036},
  issn = {0045-7930},
  doi = {10.1016/j.compfluid.2026.107036}
}

@incollection{Castro2017,
  title = {Chapter 6 - {{Well-Balanced Schemes}} and {{Path-Conservative Numerical Methods}}},
  booktitle = {Handbook of {{Numerical Analysis}}},
  author = {Castro, M. J. and {Morales de Luna}, T. and Par{\'e}s, C.},
  editor = {Abgrall, R{\'e}mi and Shu, Chi-Wang},
  year = 2017,
  series = {Handbook of {{Numerical Methods}} for {{Hyperbolic Problems}}},
  volume = {18},
  pages = {131--175},
  publisher = {Elsevier},
  doi = {10.1016/bs.hna.2016.10.002}
}

@article{Castro2020,
  title = {Well-{{Balanced High-Order Finite Volume Methods}} for {{Systems}} of {{Balance Laws}}},
  author = {Castro, Manuel J. and Par{\'e}s, Carlos},
  year = 2020,
  journal = {J Sci Comput},
  volume = {82},
  number = {2},
  pages = {48},
  issn = {1573-7691},
  doi = {10.1007/s10915-020-01149-5}
}

@article{Ciallella2026,
  title = {High Order Global Flux Schemes for General Steady State Preservation of Shallow Water Moment Equations with Non-Conservative Products},
  author = {Ciallella, Mirco and Koellermeier, Julian},
  year = 2026,
  journal = {Computers \& Fluids},
  volume = {305},
  pages = {106887},
  issn = {0045-7930},
  doi = {10.1016/j.compfluid.2025.106887}
}

@article{Fan2026,
  title = {Well-{{Balanced Path-Conservative Discontinuous Galerkin Methods}} with {{Equilibrium Preserving Space}} for {{Shallow Water Linearized Moment Equations}}},
  author = {Fan, Ruilin and Koellermeier, Julian and Xia, Yinhua and Xu, Yan and Zhang, Jiahui},
  year = 2026,
  journal = {J. Sci. Comput.},
  volume = {108},
  number = {1},
  pages = {2},
  issn = {1573-7691},
  doi = {10.1007/s10915-026-03322-8}
}

@article{Garres-Diaz2023,
  title = {A General Vertical Decomposition of {{Euler}} Equations: {{Multilayer-moment}} Models},
  shorttitle = {A General Vertical Decomposition of {{Euler}} Equations},
  author = {{Garres-D{\'i}az}, J. and Escalante, C. and {Morales de Luna}, T. and Castro D{\'i}az, M. J.},
  year = 2023,
  journal = {Applied Numerical Mathematics},
  volume = {183},
  pages = {236--262},
  issn = {0168-9274},
  doi = {10.1016/j.apnum.2022.09.004}
}

@article{Gomez-Bueno2021a,
  title = {Collocation {{Methods}} for {{High-Order Well-Balanced Methods}} for {{Systems}} of {{Balance Laws}}},
  author = {{G{\'o}mez-Bueno}, Irene and D{\'i}az, Manuel Jes{\'u}s Castro and Par{\'e}s, Carlos and Russo, Giovanni},
  year = 2021,
  journal = {Mathematics},
  volume = {9},
  number = {15},
  pages = {1799},
  publisher = {Multidisciplinary Digital Publishing Institute},
  issn = {2227-7390},
  doi = {10.3390/math9151799},
  copyright = {http://creativecommons.org/licenses/by/3.0/}
}

@article{Greenberg1996,
  title = {A {{Well-Balanced Scheme}} for the {{Numerical Processing}} of {{Source Terms}} in {{Hyperbolic Equations}}},
  author = {Greenberg, J. M. and Leroux, A. Y.},
  year = 1996,
  journal = {SIAM J. Numer. Anal.},
  volume = {33},
  number = {1},
  pages = {1--16},
  publisher = {{Society for Industrial and Applied Mathematics}},
  issn = {0036-1429},
  doi = {10.1137/0733001}
}

@article{Koellermeier2020,
  title = {Analysis and Numerical Simulation of Hyperbolic Shallow Water Moment Equations},
  author = {Koellermeier, Julian and Rominger, Marvin},
  year = 2020,
  journal = {Commun. Comput. Phys.},
  volume = {28},
  number = {3},
  pages = {1038--1084},
  issn = {1815-2406, 1991-7120},
  doi = {10.4208/cicp.OA-2019-0065}
}

@article{Koellermeier2022,
  title = {Steady States and Well-Balanced Schemes for Shallow Water Moment Equations with Topography},
  author = {Koellermeier, J. and {Pimentel-Garc{\'i}a}, E.},
  year = 2022,
  journal = {Appl. Math. Comput.},
  volume = {427},
  pages = {127166},
  issn = {0096-3003},
  doi = {10.1016/j.amc.2022.127166}
}

@article{Klingenberg2019,
  title = {Arbitrary Order Finite Volume Well-Balanced Schemes for the Euler Equations with Gravity},
  author = {Klingenberg, C. and Puppo, G. and Semplice, M.},
  year = 2019,
  journal = {SIAM J. Sci. Comput.},
  volume = {41},
  number = {2},
  pages = {A695--A721},
  issn = {1064-8275, 1095-7197},
  doi = {10.1137/18M1196704}
}

@article{Koellermeier2025,
  author = {Koellermeier, Julian},
  title = {Primitive Variable Regularization to Derive Novel Hyperbolic Shallow Water Moment Equations},
  journal = {Communications in Computational Physics},
  year = {2026},
  doi = {10.4208/cicp.OA-2025-0255},
  url = {https://arxiv.org/abs/2505.17216v2}
}

@article{Kowalski2019,
  title = {Moment Approximations and Model Cascades for Shallow Flow},
  author = {Kowalski, Julia and Torrilhon, Manuel},
  year = 2019,
  journal = {Commun. Comput. Phys.},
  volume = {25},
  number = {3},
  pages = {669--702},
  issn = {18152406},
  doi = {10.4208/cicp.OA-2017-0263}
}

@article{LeVeque1998,
  title = {Balancing {{Source Terms}} and {{Flux Gradients}} in {{High-Resolution Godunov Methods}}: {{The Quasi-Steady Wave-Propagation Algorithm}}},
  shorttitle = {Balancing {{Source Terms}} and {{Flux Gradients}} in {{High-Resolution Godunov Methods}}},
  author = {LeVeque, Randall J.},
  year = 1998,
  journal = {Journal of Computational Physics},
  volume = {146},
  number = {1},
  pages = {346--365},
  issn = {0021-9991},
  doi = {10.1006/jcph.1998.6058}
}

@article{Noelle2007,
  title = {High-Order Well-Balanced Finite Volume {{WENO}} Schemes for Shallow Water Equation with Moving Water},
  author = {Noelle, Sebastian and Xing, Yulong and Shu, Chi-Wang},
  year = 2007,
  journal = {Journal of Computational Physics},
  volume = {226},
  number = {1},
  pages = {29--58},
  issn = {0021-9991},
  doi = {10.1016/j.jcp.2007.03.031}
}

@article{Pares2006b,
  title = {Numerical Methods for Nonconservative Hyperbolic Systems: A Theoretical Framework.},
  shorttitle = {Numerical Methods for Nonconservative Hyperbolic Systems},
  author = {Par{\'e}s, Carlos},
  year = 2006,
  journal = {SIAM J. Numer. Anal.},
  volume = {44},
  number = {1},
  pages = {300--321},
  publisher = {{Society for Industrial and Applied Mathematics}},
  issn = {0036-1429},
  doi = {10.1137/050628052}
}

@inproceedings{Pimentel-Garcia2024,
  title = {Fully Well-Balanced Methods for Shallow Water Linearized Moment Model with Friction},
  booktitle = {Hyperbolic Problems: {{Theory}}, Numerics, Applications. {{Volume II}}},
  author = {{Pimentel-Garc{\'i}a}, Ernesto},
  editor = {Par{\'e}s, C. and Castro, M. J. and {Morales de Luna}, T. and {Mu{\~n}oz-Ruiz}, M.},
  year = 2024,
  pages = {195--208},
  publisher = {Springer Nature Switzerland},
  address = {Cham},
  doi = {10.1007/978-3-031-55264-9_17},
  isbn = {978-3-031-55264-9}
}

@article{Steldermann2023,
  title = {Shallow {{moments}} to {{capture vertical structure}} in {{open curved shallow flow}}},
  author = {Steldermann, I and Torrilhon, M and Kowalski, J},
  year = {2023},
  journal = {J. Comput. Theor. Transp.},
  volume = {52},
  number = {7},
  pages = {475--505},
  doi = {10.1080/23324309.2023.2284202}
}

@article{Vazquez-Cendon1999,
  title = {Improved {{Treatment}} of {{Source Terms}} in {{Upwind Schemes}} for the {{Shallow Water Equations}} in {{Channels}} with {{Irregular Geometry}}},
  author = {{V{\'a}zquez-Cend{\'o}n}, Mar{\'\i}a Elena},
  year = 1999,
  journal = {Journal of Computational Physics},
  volume = {148},
  number = {2},
  pages = {497--526},
  issn = {0021-9991},
  doi = {10.1006/jcph.1998.6127}
}

@article{Xing2014,
  title = {Exactly Well-Balanced Discontinuous {{Galerkin}} Methods for the Shallow Water Equations with Moving Water Equilibrium},
  author = {Xing, Yulong},
  year = 2014,
  journal = {Journal of Computational Physics},
  volume = {257},
  pages = {536--553},
  issn = {0021-9991},
  doi = {10.1016/j.jcp.2013.10.010}
}

@misc{Zhou2025a,
  title = {Moment-Enhanced Shallow-Water Equations with an Effective Wall Closure for No-Slip Bottoms},
  author = {Zhou, Shiping and Huang, Juntao and Christlieb, Andrew J.},
  year = 2025,
  number = {arXiv:2506.14785},
  eprint = {2506.14785},
  primaryclass = {math.NA},
  publisher = {arXiv},
  note = {arXiv:2506.14785v3, revised 2 September 2026},
  doi = {10.48550/arXiv.2506.14785},
  archiveprefix = {arXiv}
}

@book{French1985,
  author    = {French, Richard H.},
  title     = {Open-Channel Hydraulics},
  publisher = {McGraw-Hill},
  year      = {1985}
}

@article{Christen2010,
  author  = {Christen, Marc and Kowalski, Julia and Bartelt, Perry},
  title   = {{RAMMS}: Numerical simulation of dense snow avalanches in three-dimensional terrain},
  journal = {Cold Regions Science and Technology},
  volume  = {63},
  number  = {1--2},
  pages   = {1--14},
  year    = {2010}
}

@article{Cantero-Chinchilla2020,
  author  = {{Cantero-Chinchilla}, Francisco N. and Bergillos, Rafael J. and Gamero, Pedro and {Castro-Orgaz}, Oscar and Cea, Luis and Hager, Willi H.},
  title   = {Vertically Averaged and Moment Equations for Dam-Break Wave Modeling: Shallow Water Hypotheses},
  journal = {Water},
  volume  = {12},
  number  = {11},
  pages   = {3232},
  year    = {2020},
  doi     = {10.3390/w12113232}
}

@article{Ferro2022,
  author  = {Ferro, Paulin and Landel, Paul and Pescheux, Marc and Guillot, Simon},
  title   = {Development of a Free Surface Flow Solver Using the Ghost Fluid Method on {OpenFOAM}},
  journal = {Ocean Engineering},
  volume  = {253},
  pages   = {111236},
  year    = {2022},
  doi     = {10.1016/j.oceaneng.2022.111236}
}

@article{Kim2011,
  author  = {Kim, Dae-Hong and Lynett, Patrick J.},
  title   = {Dispersive and Nonhydrostatic Pressure Effects at the Front of Surge},
  journal = {Journal of Hydraulic Engineering},
  volume  = {137},
  number  = {7},
  pages   = {754--765},
  year    = {2011},
  doi     = {10.1061/(ASCE)HY.1943-7900.0000345}
}

@article{Larsen2019,
  author  = {Larsen, Bjarke Eltard and Fuhrman, David R. and Roenby, Johan},
  title   = {Performance of {interFoam} on the Simulation of Progressive Waves},
  journal = {Coastal Engineering Journal},
  volume  = {61},
  number  = {3},
  pages   = {380--400},
  year    = {2019},
  doi     = {10.1080/21664250.2019.1609713}
}

@article{Berberich2021,
  author = {Berberich, Jonas P. and Chandrashekar, Praveen and Klingenberg, Christian},
  title = {High order well-balanced finite volume methods for multi-dimensional systems of hyperbolic balance laws},
  journal = {Computers \& Fluids},
  volume = {219},
  pages = {104858},
  year = {2021},
  doi = {10.1016/j.compfluid.2021.104858},
  url = {https://arxiv.org/abs/1903.05154}
}

@misc{Careaga2026,
  author = {Careaga, Julio and Ersing, Patrick and Koellermeier, Julian and Winters, Andrew R.},
  title = {Entropy analysis and entropy stable {DG} methods for the {1D} shallow water moment equations},
  year = {2026},
  note = {Preprint, arXiv:2602.06513v2, revised 23 June 2026},
  doi = {10.48550/arXiv.2602.06513},
  url = {https://arxiv.org/abs/2602.06513v2}
}

@article{DalMaso1995,
  author = {{Dal Maso}, G. and LeFloch, P. G. and Murat, F.},
  title = {Definition and weak stability of nonconservative products},
  journal = {Journal de Math{\'e}matiques Pures et Appliqu{\'e}es},
  volume = {74},
  pages = {483--548},
  year = {1995},
  url = {https://hdl.handle.net/20.500.11767/16373}
}

@article{Garres-Diaz2021,
  author = {{Garres-D{\'i}az}, Jos{\'e} and {Castro D{\'i}az}, Manuel J. and Koellermeier, Julian and {Morales de Luna}, Tom{\'a}s},
  title = {Shallow Water Moment Models for Bedload Transport Problems},
  journal = {Communications in Computational Physics},
  volume = {30},
  number = {3},
  pages = {903--941},
  year = {2021},
  doi = {10.4208/cicp.OA-2020-0152},
  url = {https://arxiv.org/abs/2008.08449}
}

@article{GomezBuenoControl2021,
  author = {{G{\'o}mez-Bueno}, Irene and {Castro D{\'i}az}, Manuel Jes{\'u}s and Par{\'e}s, Carlos},
  title = {High-order well-balanced methods for systems of balance laws: a control-based approach},
  journal = {Applied Mathematics and Computation},
  volume = {394},
  pages = {125820},
  year = {2021},
  doi = {10.1016/j.amc.2020.125820}
}

@article{XingShuNoelle2011,
  author = {Xing, Yulong and Shu, Chi-Wang and Noelle, Sebastian},
  title = {On the Advantage of Well-Balanced Schemes for Moving-Water Equilibria of the Shallow Water Equations},
  journal = {Journal of Scientific Computing},
  volume = {48},
  pages = {339--349},
  year = {2011},
  doi = {10.1007/s10915-010-9377-y},
  url = {https://arxiv.org/abs/1502.00800}
}

@misc{Zhou2026Code,
  author = {Zhou, Shiping and Huang, Juntao and Christlieb, Andrew J.},
  title = {{Two-Dimensional Shallow Water Linearized Moment Equations: Hyperbolicity and Well-Balanced Schemes}},
  year = {2026},
  howpublished = {Zenodo [software]},
  version = {v1.0},
  doi = {10.5281/zenodo.22925453},
  url = {https://doi.org/10.5281/zenodo.22925453},
  note = {Version v1.0. \url{https://doi.org/10.5281/zenodo.22925453}}
}

\end{document}